\newcommand{\R}{\mathbb R}
\newcommand{\Q}{\mathbb Q}
\newcommand{\Z}{\mathbb Z}
\newcommand{\N}{\mathbb N}
\newcommand{\E}{\mathbb E}
\renewcommand{\P}{\mathbb P}
\renewcommand{\O}{\mathbb F}
\newcommand{\eps}{\varepsilon}
\newcommand{\cadlag}{càdlàg }
\newcommand{\B}{\mathbb B}
\newcommand{\T}{\mathbb T}
\newcommand{\F}{\mathbb F}
\newcommand{\1}{\mathbb 1}
\newcommand{\I}{\mathcal I}
\newcommand{\dd}{\mathrm{d}}
\newcommand{\sign}{\text{sign}\;}
\newcommand{\ox}{\omega^{(x)}}
\newcommand{\oy}{\omega^{(y)}}
\providecommand*{\cupdot}{\mathbin{\mathpalette\@cupdot{}}}
\newcommand*{\@cupdot}[2]{\ooalign{$\m@th#1\cup$\cr\hidewidth$\m@th#1\cdot$\hidewidth}}
\newcommand*\goodoverline[1]{\overline{#1\vphantom{+1}}}
\newcommand{\lobar}{\pi^{-1}(\widetilde \omega)}
\newcommand{\n}{\llbracket N\rrbracket}
\newcommand{\isd}{\stackrel{d}{=}}
\newtheorem*{ttheorem}{Theorem}
\newcommand{\U}{\mathcal U}
\newcommand{\m}{\llbracket m\rrbracket}
\begin{document}


\section{Introduction}\label{sec:intro}


In this work we introduce a branching random walk on $\Z^d$ in a killing random environment. The process consists of particles performing a branching random walk in continuous time. All particles jump independently at rate $\kappa$ and give birth to children at rate $\lambda$. The jump rate $\kappa$, the birth rate $\lambda$ and the distribution $q$ of the number of children do not change over time and space, and are the parameters of the model. 

We then consider this process in a random environment $\omega$ given by disasters in space-time, defined as follows: The environment $\omega$ consists of a collection $\big(\ox\big)_{x \in \Z^d}$ of i.i.d.\ random variables where $\ox=(\ox(t))_{t \geq 0}$ is a Poisson process of rate one. 
Whenever $\ox$ has a jump at time $t$, all the particles occupying $x$ at time $t$ are killed. 

We give an answer to the following question:  
\begin{center}
For which values of $\lambda,\kappa$ and $q$ is the probability that the branching random walk survives strictly positive?
\end{center}
A priori, the answer might depend on the realization of the random environment, but we will see that the survival probability is either zero, for almost all environments, or strictly positive, for almost all environments.

Let us comment on the dependence on the parameters of the model: It is clear by a coupling argument that increasing $\lambda$ will increase the probability of survival, simply because there are more particles. Similarly, replacing the distribution $q$ of the number of descendants by some distribution $\widetilde q$ having a larger mean should also increase the chance of survival. The dependence on $\kappa$ is more tricky: If the jump rate is small, the process is essentially frozen and remains concentrated on few sites, and can be killed quickly if the environment is particularly unfavorable in a small area. If we increase $\kappa$, the process will jump away from any small area that is atypical and see an environment that is more average. However even in the best case particles will be killed at rate $1$. 

We will not fully resolve the dependence on $\kappa$, but instead connect the problem to the survival rate 
in the one-particle model, which was studied in \cite{shiga}. This correspondence is similar to the connection between the random polymer model and branching random walks in random space-time-environments, as explained in Section 1.3 in \cite{yoshida}. The proof of extinction in the critical case borrows heavily from the proof given in \cite{garetmarchand}, which confirmed Conjecture 1 in \cite{yoshida}.

Branching random walks in time-dependent environments have been studied extensively in the context of the parabolic Anderson model, see \cite{gaertnerdenHoll}, \cite{ErhdenHollMaillard}. However, most papers consider the solution to an SDE with random potential which describes the behavior of the {\bf expectation} of the number of particles in a branching random walk in random environment, and not the actual particle system (a notable exception where the two models are compared, is \cite{ortgieseroberts}).  In addition,
most papers have non-degeneracy conditions on the killing rates which are violated by our environment. In particular, we point out that our model differs from the branching random walks considered in \cite{yoshida} not only because time is continuous instead of discrete, but also because disasters in the environment were excluded in \cite{yoshida} (see formula (1.7)). The possibility of killing many particles at the same site at once makes our model interesting but also creates some technical difficulties.
For a survey on the parabolic Anderson model and random walk in random potential, we refer to \cite{WolfgangK}.

The paper is organized as follows.
In the remainder of Section \ref{sec:intro} we define the process and recall some previously known results about the one-particle model. Our main result, stated in Section \ref{sec:result}, is Theorem \ref{thm:main} which characterizes the set of parameters where the survival probability is strictly positive. 

The subcritical case of Theorem \ref{thm:main} follows immediately from the first moment method, see Section \ref{sec:sub}. 

In Section \ref{sec:super} we handle the supercritical case by comparing our process to an embedded Galton-Watson process with i.i.d.\ offspring distributions. While this argument is relatively short, it needs an auxiliary result (Proposition \ref{prop:propSTilde}) about the one-particle model. To prove the auxiliary result, we need uniform moment bounds (see Proposition \ref{prop:unifMom}) and a concentration inequality (see Proposition \ref{concin}). The proofs of these propositions make use of stochastic domination.
These results can be found in Sections \ref{sec:auxiliary}, \ref{sec:concentration} and \ref{sec:stilde}, in which no branching processes occur.

Finally the critical case follows from a standard comparison to oriented site percolation, presented in Section \ref{sec:outline}. To implement this argument we need two propositions, the proofs of which are carried out in the remainder of Section \ref{sec:critical}.

\subsection{Definition and Notation}\label{sec:constr}


We first define the branching random walk introduced above: We identify the nodes of a tree with the set
\[\N^*\coloneqq \bigcup_{k=0}^\infty \N^k=\Big\{x=(x_1,...,x_k)\colon k\in\N,x_1,...,x_k\in\N\Big\}.\]
We call $|(x_1,...,x_k)|=:k$ the \textbf{height} of $v$ and write $\emptyset$ for the unique element of height $0$, which we call the \textbf{root}. Proceeding recursively we interpret $(x_1,...,x_k)$ as the the $x_k^\text{th}$ child of $(x_1,...,x_{k-1})$, for $k\geq 1$. 
Fix now positive values $\kappa$ and $\lambda$ as well as a distribution $q=(q(k))_{k\in\N}$ on the natural numbers satisfying
\begin{equation}\label{mdef}
m\coloneqq \sum_{k=0}^\infty kq(k) < \infty\quad\text{ and }\quad q(1)<1.
\end{equation}
We associate to every node an exponential clock of rate $\lambda$, and whenever a clock rings the node is removed and replaced by its children, where the number of children is distributed according to $q$. The clocks and the numbers of descendants are independent. We will write $V(t)$ for the set of nodes that are alive at time $t$, starting with $V(0)=\{\emptyset\}$. 

Next, we extend this by associating to each node $v$ alive at time $t$ a position $X(t,v)$ in $\Z^d$. We let each particle perform a simple random walk in continuous time of jump rate $\kappa$ between its birth and the time when it is replaced by its children, independently from everything else. The root initially starts in the origin, and all other nodes start at the position occupied by their parent node at the time of birth. 

For $v\in V(t)$, it will be convenient to extend $X(t,v)$ to a function $X(\cdot,v)\colon [0,t]\to\Z^d$, where for $s\in[0,t]$ we set $X(s,v)$ equal to the position occupied at time $s$ by the unique ancestor of $v$ in $V(s)$.

The process described so far is well-studied. Recall that the environment $\omega=\big(\ox\big)_{x\in\Z^d}$ consists of independent Poisson processes of rate $1$ indexed by the sites of $\Z^d$, which are independent of the random variables defined before. Let
\[\delta(t,x)\coloneqq  \ox(t)-\ox(t^-).\]
If $\delta(t,x) = 1$, we say that
there is a disaster at time $t$ at $x$. The process we are interested in is denoted $(Z(t))_{t\geq 0}$, with
\[Z(t)\coloneqq \big\{v\in V(t)\colon \delta(s,X(s,v))=0\text{ for all } 0\leq s< t\big\}\subseteq V(t).\]
So $Z(t)$ contains all particles $v$ where no disaster occurred along the trajectory of $v$ before time $t$. Note that since we did not assume $q(0)=0$ it is possible that a particle has zero children, and the process may die out even without the influence of the environment.

We will use $Q$ to denote the law of the environment, and $P$ for the law of the branching random walk. Typically we consider the processes $Z(t)$ for fixed realizations of $\omega$, and then we write $P_\omega$ for the \textbf{conditional} or \textbf{quenched} law. The \textbf{annealed} or \textbf{averaged} law $\P$ is given by
\[\P(Z\in\cdot)\coloneqq  \int P_\omega(Z\in\cdot) Q(\dd \omega).\]
We denote the corresponding expectation by $\E$. With a slight abuse of notation, we also use $\E$ for the expectation with respect to $Q$. Occasionally we want to stress the dependence on the parameters, in which case we write $\P^{\kappa,\lambda}$ and $P_\omega^{\kappa,\lambda}$.


\subsection{Previous results about the one-particle model}\label{sec:prev}
There is a close relationship between our model and the model considered in \cite{shiga}. There, the process consists of a single particle performing random walk at rate $\kappa$ among disasters in the same way that particles in our model do. In this section we summarize some known results. 
\par Let $(X(t))_{t\geq 0}$ be a simple random walk in continuous time, moving in $\Z^d$ at a jump rate $\kappa>0$, with the corresponding probability measure denoted $P$. The environment $\omega=\big(\ox\big)_{x\in\Z^d}$ is the same as before. We let $\tau$ be the first time the random walk hits any of the disasters, that is 
\[\tau\coloneqq \inf\Big\{t\geq 0\colon \delta(s,X_s)>0\Big\}\, .\]
We are interested in the probability to survive until time $t$ for a fixed realization of the environment:
\[S(t)\coloneqq P_\omega(\tau \geq t)\]
Note that by averaging over the environments one easily gets the annealed survival rate:
\[\E[S(t)] = \int S(t) \dd Q = e^{-t}.\]
We summarize the results of  \cite{shiga} in the following 
\begin{ttheorem}
Define $p\colon (0,\infty)\to (-\infty, 0)$ by
\begin{equation}\label{pdef}
p(\kappa)\coloneqq \lim_{t\to\infty}\frac 1t\log S(t).
\end{equation}
Then
\begin{itemize}
\item[(i)] The limit in \eqref{pdef} exists $Q$-almost surely and is deterministic, with 
\begin{equation}\label{expectaswell}
p(\kappa)=\lim_{t\to\infty}\frac 1t\E[\log S(t)]\, .
\end{equation}
\item[(ii)] For all $\kappa>0$ we have $p(\kappa)\leq -1$.
\item[(iii)] For any $d$ we have $\lim_{\kappa\to 0}p(\kappa)=-\infty$ and $\lim_{\kappa\to\infty}p(\kappa)=-1$.
\item[(iv)] There exists a critical rate $\kappa_c=\kappa_c(d)\in(0,\infty]$, such that
\[\begin{matrix}p(\kappa)<-1&\text{if }\kappa<\kappa_c\\p(\kappa)=-1&\text{if }\kappa>\kappa_c\end{matrix}\]
\item[(v)] For $d\geq 3$ we have $\kappa_c(d)<\infty$.
\end{itemize}
\end{ttheorem}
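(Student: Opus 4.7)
The plan is to treat the five parts in order, using subadditivity, Jensen's inequality, and the second-moment method.

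For (i), I would introduce the pinned survival probability $\widetilde S(t) \coloneqq P_\omega(\tau > t, X(t) = 0)$. By the Markov property at time $t$,
\[
\widetilde S(t+s, \omega) \geq \widetilde S(t, \omega) \cdot \widetilde S(s, \theta_t \omega),
\]
where $\theta_t$ is the time-shift on $\omega$. Since the shift is measure-preserving and ergodic under $Q$, Kingman's subadditive ergodic theorem applied to $-\log \widetilde S$ gives $Q$-a.s.\ convergence of $\frac{1}{t} \log \widetilde S(t)$ to a deterministic constant; integrability of $\log \widetilde S(1)$ follows from the elementary deterministic lower bound $\widetilde S(t) \geq e^{-(\kappa + 1)t}$ obtained by forcing the walk to stay at the origin on a disaster-free interval. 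The inequality $S(t) \geq \widetilde S(t)$ is immediate, and a matching upper bound showing that $S(t)$ and $\widetilde S(t)$ agree up to a polynomial factor in $t$ follows from a local CLT estimate for the conditioning that the walk returns to the origin at time $t$. This yields (i).

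Part (ii) is a quick Jensen computation: the quenched probability that a fixed path $\pi$ of length $t$ survives equals $e^{-\sum_x L_\pi(x)} = e^{-t}$, since the $\omega^{(x)}$ are independent unit-rate Poisson processes and the local times sum to $t$, so $\E[S(t)] = e^{-t}$ by Fubini, and $\E[\log S(t)] \leq -t$ gives $p(\kappa) \leq -1$. For (iii), the $\kappa \to 0$ limit requires an upper bound on $S(t)$: when $\kappa$ is small, the walker typically makes very few jumps and visits only a handful of sites, so survival forces each of those sites to be disaster-free throughout its visit interval, and a union bound over the polynomially many trajectory types yields a bound of the form $p(\kappa) \leq -c\log(1/\kappa)$. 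The $\kappa \to \infty$ limit combines (ii) with a matching lower bound coming from the second-moment calculation used in (v). For (iv), I would show monotonicity by coupling the rate-$\kappa'$ walk with the rate-$\kappa$ walk, writing the former as the latter plus an independent Poisson stream of extra jumps at rate $\kappa' - \kappa$ and averaging the extra jumps out; this gives that $\{\kappa : p(\kappa) = -1\}$ is upward closed, so $\kappa_c$ is well defined, and $\kappa_c > 0$ follows from (iii).

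For (v), I would use the second-moment method. With $X, X'$ two independent rate-$\kappa$ walks, Fubini gives
\[
\E[S(t)^2] = e^{-2t} \cdot E\left[ \exp\left( \int_0^t \1_{\{X(s) = X'(s)\}} \, ds \right) \right],
\]
and the exponent is the local time at $0$ of the rate-$2\kappa$ difference walk $X - X'$. In $d \geq 3$ this walk is transient, and for $\kappa$ large enough the exponential moment on the right is uniformly bounded in $t$; Paley--Zygmund then gives $Q(S(t) \geq c e^{-t}) \geq c' > 0$, which combined with (i) and (ii) forces $p(\kappa) = -1$ and hence $\kappa_c(d) < \infty$. I expect the main obstacles to be the exponential-scale comparison between $\widetilde S$ and $S$ in (i) and the coupling-based monotonicity in (iv), both of which require the kind of careful stochastic-domination machinery the paper later develops for the branching setting.
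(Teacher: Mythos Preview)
The paper does not supply its own proof of this theorem: it is stated in Section~\ref{sec:prev} as a summary of results from \cite{shiga}, so there is no in-paper argument to compare your proposal against. That said, the paper does later prove the closely related fact that $\tfrac1t\log\widetilde S(t)\to p(\kappa)$ (Proposition~\ref{prop:propSTilde}), and the machinery it uses there exposes the two real gaps in your sketch of (i).

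First, your claimed deterministic lower bound $\widetilde S(t)\ge e^{-(\kappa+1)t}$ is false. Forcing the walk to stay at the origin only gives survival if the origin happens to be disaster-free on $[0,t]$; for a generic realization of $\omega$ with disasters at $0$ this contributes nothing, and $\widetilde S(t)$ can be arbitrarily small. Integrability of $\log\widetilde S(1)$ (needed to feed Kingman) is genuinely non-trivial: in \cite{shiga} and in the present paper it is obtained via negative moment bounds of the type $\E\big[\widetilde S(1)^{-\delta}\big]<\infty$, proved by a two-site comparison (cf.\ Lemma~\ref{lem:expmom} and Proposition~\ref{prop:unifMom} here). Second, your ``local CLT'' step comparing $S$ and $\widetilde S$ up to a polynomial does not go through as stated: under $P_\omega$ the events $\{\tau\ge t\}$ and $\{X_t=0\}$ are correlated through the environment, so you cannot simply peel off $P(X_t=0)$. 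The paper's proof of $\tfrac1t\log\widetilde S(t)\to p(\kappa)$ uses a concentration inequality (Proposition~\ref{concin}) together with a fractional-moment computation, which is substantially more work than a CLT correction.

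The remaining parts are on firmer ground. Your argument for (ii) is exactly the annealed computation $\E[S(t)]=e^{-t}$ plus Jensen, and your second-moment route to (v) via the intersection local time of two independent walks is the standard one. Your coupling idea for (iv) can in fact be made precise: writing the rate-$\kappa'$ walk as $X+Y$ with $Y$ an independent rate-$(\kappa'-\kappa)$ walk, Jensen gives $\log S_{\kappa'}(t,\omega)\ge E_Y[\log S_\kappa(t,\omega_Y)]$, where $\omega_Y$ is the environment shifted along the (fixed) path $Y$; since $\omega_Y\stackrel{d}{=}\omega$ under $Q$ for every deterministic c\`adl\`ag $Y$, taking $\E$ yields monotonicity of $\E[\log S_\kappa(t)]$ in $\kappa$. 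This is a nice observation, though note that the present paper explicitly refrains from asserting monotonicity of $p$ itself (see the remark after Corollary~\ref{expgrowth}).
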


\subsection{The main result}\label{sec:result}
We are interested in the event
\begin{equation}
\label{survivaldef}
\{Z\text{ survives}\}\coloneqq \{|Z(t)|>0, \, \forall t\geq 0\}\, .
\end{equation}
Using the exponent $p(\kappa)$ we prove the following criterion:
\begin{theorem}\label{thm:main}
\[{P_\omega}(Z\text{ survives})>0\quad Q\text{-a.s. }\iff \lambda(m-1)+p(\kappa)>0\, .\]
\end{theorem}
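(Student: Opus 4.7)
I would treat the three regimes separately. In all of them, the subcritical estimate will bound the extinction region from below, the supercritical construction will identify a positive-probability survival region containing the open half-space $\{\lambda(m-1)+p(\kappa)>0\}$, and the critical case will plug the remaining boundary.

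\textbf{Subcritical case.} A many-to-one (tagged particle) identity gives
\[E_{P_\omega}[|Z(t)|]=e^{\lambda(m-1)t}\,S(t),\]
since any tagged genealogical line in the BRW evolves as a rate-$\kappa$ random walk independent of $\omega$, and its survival is entirely determined by the disasters encountered along its single trajectory. Using $\tfrac{1}{t}\log S(t)\to p(\kappa)$ $Q$-a.s.\ (item (i) of the recalled theorem of \cite{shiga}), the right-hand side decays exponentially when $\lambda(m-1)+p(\kappa)<0$, and Markov's inequality yields $P_\omega(|Z(t)|\geq1)\to 0$, hence $P_\omega(Z\text{ survives})=0$ $Q$-a.s.

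\textbf{Supercritical case.} Fix a large $T$ and, for $\widetilde S(T)$ the quenched expected weight of a single tagged random walk that survives all disasters and sits at the origin at time $T$, the same many-to-one argument gives expected number of BRW particles at the origin at time $T$ (free of disasters along their ancestral path) equal to $e^{\lambda(m-1)T}\widetilde S(T)$. The crucial ingredient is Proposition \ref{prop:propSTilde}, which identifies $\tfrac{1}{T}\log\widetilde S(T)$ with $p(\kappa)$ $Q$-a.s. Under the supercritical hypothesis this expectation grows exponentially; the concentration inequality (Proposition \ref{concin}) and uniform moment bounds (Proposition \ref{prop:unifMom}) then convert this into a $Q$-positive event on which at least $N$ particles sit at the origin at time $T$, with $N$ as large as desired. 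Restarting from these $N$ coincident particles, using spatial independence of $\omega$ in the post-$T$ environment, yields an embedded Galton--Watson tree with supercritical offspring mean (once $N$ is large enough), so $P_\omega(Z\text{ survives})>0$ on a set of positive $Q$-measure; a tail-type 0--1 argument for $\omega$ (survival requires the process to explore infinitely many sites, so $\{P_\omega(\text{survives})>0\}$ is essentially tail in the i.i.d.\ environment) then upgrades this to $Q$-a.s.

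\textbf{Critical case.} This is the main obstacle, since the first moment no longer decays. Following \cite{criticalcontact,garetmarchand}, the plan is to show that the survival region is open in parameter space by a block-renormalization comparison with supercritical oriented site percolation on $\Z^d\times\N$. A space-time block is declared \emph{good} if a single particle placed at its centre produces sufficiently many progeny near two designated exit points within the block without being wiped out by disasters; one shows that the probability of a good block can be made arbitrarily close to $1$ as the block scale grows, uniformly in small perturbations of the parameters $(\lambda,\kappa,q)$. Comparison with supercritical oriented percolation therefore propagates survival of $Z$ to a whole neighbourhood of any surviving parameter, so the survival region is open; since by the subcritical step it is contained in $\{\lambda(m-1)+p(\kappa)>0\}$, the boundary curve $\lambda(m-1)+p(\kappa)=0$ must lie in the extinction region. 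The main technical hurdle, compared to \cite{yoshida,garetmarchand}, is that a single disaster can simultaneously kill arbitrarily many particles occupying the same site, which breaks naive monotonicity and FKG-type arguments; this is precisely what the stochastic-domination machinery developed in Sections~\ref{sec:auxiliary}--\ref{sec:stilde} is designed to circumvent.
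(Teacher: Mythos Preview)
Your subcritical argument matches the paper's exactly.

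In the supercritical case you have the right skeleton---embed a branching process by tracking particles that return to the origin at multiples of $T$---but several details are off. The embedded process is not a Galton--Watson tree: the offspring law in generation $k$ depends on the environment in $[(k-1)T,kT)$, so it is a branching process in an i.i.d.\ \emph{random} environment. The paper therefore invokes the Smith--Tanny criterion \cite{smith,tanny}, which requires both $\E[\log m^{(1)}]>0$ (this is where \eqref{eq:decayat0} of Proposition~\ref{prop:propSTilde} enters, via $m^{(1)}=e^{\lambda(m-1)T}\widetilde S(T)$) and the non-degeneracy condition $\E[\log(1-q^{(1)}(0))]>-\infty$, which needs \eqref{eq:finirate}. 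That criterion already delivers positive survival probability for $Q$-almost every $\omega$, so your tail $0$--$1$ step is superfluous and, as stated, not obviously justified. You also misplace Propositions~\ref{concin} and~\ref{prop:unifMom}: they are not invoked directly in the supercritical proof but are the ingredients used to establish Proposition~\ref{prop:propSTilde}.

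For the critical case your outline is essentially the paper's, with one difference: the paper perturbs the \emph{disaster rate} $\alpha$ rather than $(\lambda,\kappa,q)$, proving $p(1+\delta,\kappa)\le p(1,\kappa)-\delta$ so that any $\delta>0$ lands strictly in the subcritical regime. Perturbing $\lambda$ would work equally well, but perturbing $\kappa$ would be dangerous since $p(\kappa)$ is not known to be continuous. One further correction: the stochastic-domination machinery of Sections~\ref{sec:auxiliary}--\ref{sec:stilde} serves the \emph{supercritical} case (it proves Proposition~\ref{prop:propSTilde}), not the critical one. The critical block argument relies instead on an FKG inequality (Theorem~\ref{thm:fkg}) and the lemmas of Section~\ref{sec:lemmas}; disasters do not break FKG here, because conditionally on the branching-random-walk trees the relevant functionals are monotone in the disaster indicators.
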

In analogy to classical branching processes, we define three regimes.
\begin{definition}
We say that the process $Z(t)$ is 
\[\begin{matrix}
\text{subcritical}&\text{if }&\lambda(m-1)+p(\kappa)<0,\\
\text{critical}&\text{if }&\lambda(m-1)+p(\kappa)=0,\\
\text{supercritical}&\text{if }&\lambda(m-1)+p(\kappa)>0.
\end{matrix}\]
\end{definition}
An easy corollary is
\begin{corollary}\label{annealedsurvival}
\[\P(Z\text{ survives})>0\iff \lambda(m-1)+p(\kappa)>0\, .\]
\end{corollary}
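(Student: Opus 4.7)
The plan is to derive Corollary \ref{annealedsurvival} from Theorem \ref{thm:main} via the defining identity of the annealed law:
\[
\P(Z \text{ survives}) = \int P_\omega(Z \text{ survives})\, Q(\dd\omega) = \E\bigl[P_\omega(Z \text{ survives})\bigr].
\]
Both directions of the equivalence reduce to extracting information about $P_\omega(Z\text{ survives})$ from this integral representation.

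For the direction $\lambda(m-1)+p(\kappa)>0 \implies \P(Z\text{ survives})>0$, I would simply invoke Theorem \ref{thm:main}: under the hypothesis, $P_\omega(Z\text{ survives})>0$ for $Q$-a.e.\ $\omega$. Integrating a $Q$-a.s.\ strictly positive function with respect to $Q$ yields a strictly positive number, so $\P(Z\text{ survives})>0$.

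For the converse, I would argue by contraposition. Assume $\lambda(m-1)+p(\kappa)\leq 0$. What I actually need is the stronger quenched statement that $P_\omega(Z\text{ survives})=0$ for $Q$-a.e.\ $\omega$, because only then does the integrand vanish $Q$-a.s.\ and hence $\P(Z\text{ survives})=0$. In the subcritical regime this comes for free from the first moment argument of Section \ref{sec:sub}: since $\E|Z(t)|\to 0$, annealed extinction (and hence also quenched extinction $Q$-a.s.) is immediate. In the critical regime the proof of Theorem \ref{thm:main} itself (as announced in the abstract and carried out along the lines of \cite{garetmarchand}) establishes $P_\omega(Z\text{ survives})=0$ $Q$-a.s., not merely the failure of a.s.\ positivity. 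Consequently, in either case the integrand is zero $Q$-a.s.\ and the annealed survival probability vanishes.

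There is essentially no genuine obstacle to this corollary beyond noting the above refinement: the equivalence in Theorem \ref{thm:main} is phrased as ``$P_\omega(Z\text{ survives})>0$ $Q$-a.s.,'' and this is a genuine dichotomy (survival probability either zero $Q$-a.s.\ or positive $Q$-a.s., as the spatial translation invariance and ergodicity of the i.i.d.\ environment $Q$ confirm). Alternatively, one could bypass any appeal to the proof of Theorem \ref{thm:main} by invoking this $0$-$1$ law directly: if $\P(Z\text{ survives})>0$, then $Q(P_\omega(Z\text{ survives})>0)>0$, which by the $0$-$1$ law forces $P_\omega(Z\text{ survives})>0$ $Q$-a.s., and Theorem \ref{thm:main} then yields $\lambda(m-1)+p(\kappa)>0$.
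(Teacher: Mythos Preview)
Your derivation is correct and is precisely the intended one: the paper states this as ``An easy corollary'' of Theorem~\ref{thm:main} without giving a separate proof, and your argument via $\P(Z\text{ survives})=\E[P_\omega(Z\text{ survives})]$ together with the quenched dichotomy (survival probability is either zero $Q$-a.s.\ or positive $Q$-a.s.) is exactly what one fills in.

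One small slip worth flagging: in the subcritical case you write ``since $\E|Z(t)|\to 0$'', but the \emph{annealed} first moment is $\E[|Z(t)|]=e^{(\lambda(m-1)-1)t}$, which need not tend to zero when $p(\kappa)<-1$ and $1\le \lambda(m-1)<-p(\kappa)$. What Section~\ref{sec:sub} actually proves is that the \emph{quenched} first moment $E_\omega[|Z(t)|]\le e^{-\frac{\eps}{2}t}$ for $Q$-a.e.\ $\omega$, which directly gives $P_\omega(Z\text{ survives})=0$ $Q$-a.s.; this is what your argument needs, so the conclusion stands.
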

We define the event of \textbf{local survival} to be
\[\{Z\text{ survives locally}\}\coloneqq \{ 0\text{ is occupied for arbitrarily large times}\}\, .\]
Clearly
\[\{Z\text{ survives }\} \supseteq \{Z\text{ survives locally}\}\, .\]
Our proof of Theorem \ref{thm:main} shows in fact that the process survives locally in the supercritical case, so that the following holds.
\begin{corollary}\label{localsurvival}
The process either has a positive probability to survive locally in almost every environment, or it dies out with probability $1$ in almost all environments. Moreover
\[{P_\omega}(Z\text{ survives locally})>0\quad Q\text{-a.s. }\iff \lambda(m-1)+p(\kappa)>0\, .\]
\end{corollary}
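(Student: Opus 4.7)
The plan is to deduce this corollary directly from Theorem \ref{thm:main} together with a slight strengthening that is already implicit in the proof of the supercritical case. I would prove the $\iff$ by establishing, in each regime, a $Q$-a.s.\ statement about $P_\omega(Z\text{ survives locally})$; the dichotomy in the first sentence of the corollary then falls out because $\lambda(m-1)+p(\kappa)$ is a deterministic quantity.

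For the $\leq 0$ case I would use only the trivial set inclusion $\{Z \text{ survives locally}\}\subseteq\{Z\text{ survives}\}$: if $\lambda(m-1)+p(\kappa)\leq 0$, (the proof of) Theorem \ref{thm:main} gives $P_\omega(Z \text{ survives})=0$ for $Q$-a.e.\ $\omega$, and hence $P_\omega(Z \text{ survives locally})=0$ $Q$-a.s.

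For the supercritical case I would revisit the construction of Section \ref{sec:super}. That proof introduces an embedded Galton-Watson process whose $n$-th generation consists of those descendants of the root that are alive at time $nT$ \emph{and are located at the origin at that time}. The expected offspring number per generation is, up to controllable corrections, of order $e^{T(\lambda(m-1)+p(\kappa))}$, thanks to Proposition \ref{prop:propSTilde}, which identifies the free one-particle survival rate with the rate of survival-with-return-to-origin. For $T$ large this exceeds $1$, so the embedded process is a supercritical Galton-Watson process and survives with positive probability. On its survival event the origin is occupied at every time $nT$, which is exactly local survival of $Z$. Since the argument applies to $Q$-a.e.\ $\omega$, we obtain $P_\omega(Z\text{ survives locally})>0$ $Q$-a.s.

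The main obstacle is not a new one: it is precisely what Proposition \ref{prop:propSTilde} provides. Without identifying the survival-and-return-to-origin rate with the free survival rate $p(\kappa)$, one could only bound the embedded Galton-Watson mean from below by a potentially smaller quantity, and the sharp criterion for local survival would be out of reach. Once Proposition \ref{prop:propSTilde} is available, the reduction is essentially mechanical and local survival drops out of the same argument that establishes global survival.
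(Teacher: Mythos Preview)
Your proposal is correct and is exactly the route the paper takes: the supercritical proof in Section~\ref{sec:super} already produces an embedded branching process $(A(k))_k$ counting particles that return to the origin at times $kT$, so its survival (guaranteed for $Q$-a.e.\ $\omega$ with positive $P_\omega$-probability by the Smith--Tanny criterion together with Proposition~\ref{prop:propSTilde}) immediately gives local survival; the other direction is the trivial inclusion you state. One small remark: the embedded process is a branching process in i.i.d.\ random environment rather than a single Galton-Watson process, so the relevant condition is $\E[\log m^{(1)}]>0$ (and $\E[\log(1-q^{(1)}(0))]>-\infty$), not merely that the mean offspring exceeds $1$---but this is precisely what the paper checks and what Proposition~\ref{prop:propSTilde} is used for.
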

\begin{corollary}\label{expgrowth}There exists $c>0$ such that $Q$-almost surely
\[\{Z\text{ survives }\}=\Big\{\liminf_{t\to\infty}|Z_t|e^{-ct}>0\Big\}\]
\end{corollary}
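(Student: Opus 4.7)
The inclusion ``$\supseteq$'' is immediate, since extinction is permanent: if $\liminf_{t\to\infty}|Z_t|e^{-ct}>0$, then $|Z_t|\geq 1$ for all sufficiently large $t$, and since $Z(s)=\emptyset$ for every $s$ after any extinction time, we actually have $|Z_t|\geq 1$ for every $t\geq 0$, i.e.\ $Z$ survives.

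For ``$\subseteq$'', only the supercritical regime requires work: in the sub- and critical cases $P_\omega(Z\text{ survives})=0$ $Q$-a.s.\ by Theorem~\ref{thm:main}, so both sides are a.s.\ empty events. The plan in the supercritical case is to leverage the embedded Galton--Watson process $(G_n)_{n\geq 0}$ constructed in Section~\ref{sec:super}, whose $n$-th generation lies inside $Z(nT)$ for a suitable $T>0$ and whose i.i.d.\ offspring distribution has mean $\mu>1$. By the Seneta--Heyde theorem for supercritical Galton--Watson processes (requiring only $\mu<\infty$), there is a norming sequence $c_n$ with $c_n^{1/n}\to\mu$ such that $G_n/c_n$ converges a.s.\ to a limit positive on $\{G\text{ survives}\}$; hence for any $0<c<\log\mu$, $G_n\geq e^{cn}$ eventually on $\{G\text{ survives}\}$, and so $|Z(nT)|\geq e^{cn}$ on that event. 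To turn the discrete bound into a continuous-time one, I would refine the coupling by requiring generation-$(n+1)$ particles of $G$ to be descendants that remain continuously alive throughout $[nT,(n+1)T]$; this only alters the offspring distribution by a bounded factor (so the mean stays above $1$ after possibly decreasing $T$), and yields $\inf_{t\in[nT,(n+1)T]}|Z(t)|\geq G_{n+1}$, which after a small adjustment of $c$ delivers $\liminf_{t\to\infty}|Z_t|e^{-ct}>0$ on $\{G\text{ survives}\}$.

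To lift this to the larger event $\{Z\text{ survives}\}$ I would proceed in two sub-steps. First, $|Z(nT)|\to\infty$ a.s.\ on $\{Z\text{ survives}\}$: if $|Z(nT)|\leq M$ infinitely often, then at each such epoch the quenched probability of extinction within the next time $T$ is bounded below by some $q_M(\omega)>0$ (all $\leq M$ particles get hit by disasters or have childless branchings), and a conditional Borel--Cantelli argument on the time-shifted environments (which are mutually independent and $Q$-distributed thanks to the product Poisson structure) forces extinction a.s. Second, conditionally on $\mathcal F_{nT}$, the $|Z(nT)|$ subtrees evolve independently in the time-shifted environment, and each particle seeds a surviving embedded GW with a quenched probability that, by ergodicity of $Q$, is bounded below by a deterministic constant $p_0>0$ on a full-$Q$-measure set of shifts. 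Therefore the quenched chance that \emph{none} of the $|Z(nT)|$ particles seeds a surviving GW is at most $(1-p_0)^{|Z(nT)|}\to 0$, so on $\{Z\text{ survives}\}$ some embedded GW a.s.\ survives from some time $nT$ onward, and the refinement of the previous paragraph then yields the desired continuous-time exponential lower bound.

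The main obstacle is the interaction between the quenched Borel--Cantelli passage from $\{G\text{ survives}\}$ to $\{Z\text{ survives}\}$ and the continuous-time refinement: both rely on the ergodicity/stationarity of $Q$ to bound $p_0(\omega)$ and $q_M(\omega)$ uniformly after time-translation, and on a careful choice of the embedded GW so that its exponential growth transfers from the grid $\{nT:n\in\N\}$ to all $t\geq 0$ with a single constant $c>0$.
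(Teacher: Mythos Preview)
Your overall strategy matches the paper's (see Remark~\ref{rm:proof}): exploit the embedded process $(A(k))_k$ from Section~\ref{sec:super} and cite exponential-growth results. There is, however, a genuine confusion. The process $(A(k))_k$ is \emph{not} a classical Galton--Watson process with a single offspring law: the offspring distributions $q^{(k)}$ depend on the environment in the slab $[(k-1)T,kT)$ and form an i.i.d.\ sequence of \emph{random} laws, so $(A(k))_k$ is a branching process in i.i.d.\ random environment. The Seneta--Heyde theorem for ordinary Galton--Watson processes does not apply, and speaking of ``mean $\mu>1$'' is misleading since $m^{(1)}$ is itself random (what is established is $\E[\log m^{(1)}]>0$). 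What is needed, and what the paper invokes, is Tanny's theorem \cite{tanny}, which gives $Q$-almost sure exponential growth of a supercritical BPRE on its survival set; the relevant hypotheses were verified in the proof of the supercritical case.

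Your lifting from $\{A\text{ survives}\}$ to $\{Z\text{ survives}\}$ also has a gap. You assert that the quenched survival probability of an embedded process seeded at time $nT$ is bounded below by a deterministic $p_0>0$ ``by ergodicity of $Q$''; but this probability is a nondegenerate random variable in the future environment (and in the particle's position), and ergodicity provides no uniform lower bound. Moreover, the particles present at time $nT$ all evolve in the \emph{same} future environment, so under $P_\omega$ their embedded subprocesses are not independent and the product bound $(1-p_0)^{|Z(nT)|}$ is unjustified. The paper does not spell this step out either; a workable route is to argue under the annealed law $\P$, where each restarted embedded process has a fixed survival probability $\rho'>0$, and then use positive association in $\omega$ (in the spirit of Theorem~\ref{thm:fkg}) to control the probability that all of them fail, combined with $|Z(nT)|\to\infty$ on survival (Lemma~\ref{lem:lemma3}).
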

For the proof see Remark \ref{rm:proof}.
\begin{remark}
By an obvious truncation argument, the assumption $m < \infty$ can be 
dropped; if $m = \infty$, we are in the supercritical case.
\end{remark}
We do not make any assumption on the shape of $p$, so a priori it may be discontinuous or may not be increasing in $\kappa$. In Corollary 4.1 in \cite{shiga2} continuity of $p$ is proven for a related class of models, but the relevant case of hard obstacles is excluded. However, if we interpret $p$ as the free energy of a polymer in random environment as in Section 3 of \cite{yoshida_path}, it is reasonable to conjecture that $p$ is concave. A proof might be attempted by showing the following
\begin{conjecture}
Fix a branching mechanism with $m>1$, and set
\[U\coloneqq \Big\{(\kappa,\lambda)\colon \P^{\kappa,\lambda}\big(Z\text{ survives}\big)>0\Big\}\subseteq (0,\infty)^2.\]
Then $U$ is a convex set.
\end{conjecture}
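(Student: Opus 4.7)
The plan is to first reduce convexity of $U$ to an analytic property of the function $p$, and then to attempt to establish that property. By Corollary~\ref{annealedsurvival}, and since $m>1$,
\[
U=\Big\{(\kappa,\lambda)\in(0,\infty)^2:\ \lambda>g(\kappa)\Big\},\qquad g(\kappa):=-p(\kappa)/(m-1),
\]
so $U$ is the strict $\lambda$-epigraph of $g$. Such an epigraph is convex if and only if $g$ is convex, equivalently, if and only if $\kappa\mapsto p(\kappa)$ is concave on $(0,\infty)$. Hence the conjecture is equivalent to concavity of $p$.

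To prove concavity of $p$, the most promising route is to exhibit a variational representation of the form
\[
p(\kappa)=-\inf_{\mu}\Big[\kappa\,\mathcal E_1(\mu)+\mathcal J(\mu)\Big],
\]
in which $\mathcal E_1$ is a $\kappa$-independent Dirichlet-type functional arising from the relative entropy of rate-$\kappa$ path measures against rate-$1$ ones, and $\mathcal J$ encodes the cost of avoiding disasters in a typical environment. Any such affine-in-$\kappa$ formula would yield concavity as an infimum of affine functions. In the soft-obstacle setting such a representation is available through Donsker--Varadhan large deviations applied to a Feynman--Kac formula; our hard-obstacle case requires a separate argument, since the killing ``potential'' is singular. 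An alternative is to work with the Poisson decomposition
\[
S(t,\kappa,\omega)=\sum_{N\geq 0}e^{-\kappa t}\frac{(\kappa t)^N}{N!}A_N(t,\omega),
\]
where $A_N(t,\omega)$ is the conditional avoidance probability given $N$ jumps placed uniformly on $[0,t]$, and to try to establish log-concavity of this mixed-Poisson series in $\kappa$ for $Q$-a.e.\ $\omega$ and every $t$, which would pass to concavity of $p$ in the limit.

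The main obstacle is concavity of $p$ itself. Existing regularity results, such as Corollary~4.1 in \cite{shiga2}, explicitly exclude the hard-obstacle case, and the natural time change $X^{(\kappa)}_t=X^{(1)}_{\kappa t}$ that links walks of different rates is incompatible with a fixed space-time environment, so there is no obvious single-parameter coupling that turns concavity into a monotone statement. I expect a successful proof to require either a new quenched variational formula for the Lyapunov exponent $p(\kappa)$ in the hard-obstacle regime, or a nontrivial simultaneous coupling of walks of rates $\kappa_0$, $\kappa_1$, and $\theta\kappa_0+(1-\theta)\kappa_1$ on a common environment---this is where the real work would lie.
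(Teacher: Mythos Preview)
The statement you are attempting is labeled a \emph{Conjecture} in the paper, and the paper provides no proof; it is explicitly left open. So there is no argument in the paper to compare against.

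Your reduction is correct: by Corollary~\ref{annealedsurvival} the set $U$ equals the strict $\lambda$-epigraph of $g(\kappa)=-p(\kappa)/(m-1)$, and convexity of this epigraph is equivalent to convexity of $g$, i.e., concavity of $p$. The paper makes the same observation in the paragraph preceding the conjecture, though it frames the implication in the opposite direction: the authors suggest attacking the concavity of $p$ \emph{by} establishing the convexity of $U$, presumably via probabilistic arguments on the branching process rather than via the one-particle Lyapunov exponent.

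The genuine gap in your proposal is that you do not actually prove concavity of $p$. Everything after the reduction is speculation: a Donsker--Varadhan-type variational formula that you acknowledge is not available in the hard-obstacle case, a log-concavity claim for the Poisson mixture $\sum_N e^{-\kappa t}(\kappa t)^N A_N/N!$ that you do not verify (and which is far from automatic, since log-concavity of mixtures is delicate even when the weights are Poisson), and a three-rate coupling that you admit is ``where the real work would lie.'' None of these sketches is carried out, and each faces the obstacles you yourself describe. What you have written is a research plan, not a proof; since the paper itself leaves the statement open, the honest conclusion is that the conjecture remains unresolved.
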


\subsection{Some more notation}\label{sec:notation}

Before we start with the proof of Theorem \ref{thm:main}, we collect some notation that will be useful at various points throughout this work. We first extend the definition of $Z$ to the case where we may have more than one initial particle. 

We call $\eta=(\eta_x)_{x\in\Z^d}$ a \textbf{configuration}, and let $Z^\eta$ denote the process as defined before, except that we start with $\eta_x$ particles in $x$, all of which evolve independently but in the same environment. If $A\subseteq \Z^d$ and $R\geq 0$ is an integer we record the special configuration $(A,R)$ where each site $x\in A$ is occupied by $R$ particles, that is
\begin{align}\label{eq:special}
(A,R)_x:=R\,\1_A(x)\quad\text{ for all }x\in\Z^d.
\end{align}
For $A\subseteq \Z^d$ we use $Z^A$ instead of $Z^{(A,1)}$ for the process started from exactly one particle on every site in $A$. For $t>0$ and $\eta$ a configuration we use 
\begin{align}\label{eq:definitialconf}
Z^{\{t\}\times \eta}=\big(Z^{\{t\}\times \eta}(s)\big)_{s\geq t}
\end{align}
to denote the process started at time $t$ with $\eta_x$ particles occupying each site $x$, and we use $Z^{\{t\}\times A}$ if $\eta$ is equal to $(A,1)$.

Moreover if $(Z(t))_{t\geq 0}$ is some branching process and $B\subseteq\Z^d$, we let $(Z_{B}(t))_{t\geq 0}$ denote the \textbf{truncated process} consisting of all particles that have never left $B$:
\begin{equation}
\label{trunc}
Z_{B}(t)\coloneqq \Big\{v\in Z(t)\colon X(s,v)\in B\;\text{ for all }s\in[0,t]\Big\}.
\end{equation}
In the simple case where $B=\{-L,...,L\}^d$ for some $L\in\N$ we simply write $(Z_L(t))_{t\geq 0}$. We also use the following notation for the set of particles of $(Z_t)_{t\geq 0}$ occupying a site $x$ at time $t$:
\begin{equation}\label{particlesat0}
Z(t)\cap\{x\}\coloneqq \{v\in Z(t)\colon X(t,v)=x\}.
\end{equation}
If $\eta$ is a configuration, we denote the event that at time $t$ every site is occupied by at least $\eta_x$ particles by
\begin{align}\label{substeil}
\{\eta\leq Z(t)\}\coloneqq \Big\{\eta_x\leq \big|Z(t)\cap\{x\}\big|\text{ for all }x\in\Z^d\Big\}.
\end{align}
In the case where $\eta=\1_C$ for some $C\subseteq\Z^d$ this is simply written as 
\begin{align*}
\{C\subseteq Z(t)\}\coloneqq \{(C,1)\leq Z(t)\}=\Big\{\forall x\in C\;\exists v\in Z(t)\text{ such that }X(t,v)=x\Big\}.
\end{align*}


\section{The subcritical case}\label{sec:sub}

\begin{proof}[Proof of Theorem \ref{thm:main} (subcritical case)]
Assume that
\[-\eps\coloneqq \lambda(m-1)+p(\kappa)<0.\]
For almost all environments $\omega$, we can find $T = T(\omega)$ such that 
\[S(t)=P_\omega(\tau\geq t)\leq e^{t(p(\kappa)+\frac\eps 2)}\quad \forall t\geq T.\]
Then we have for $t \geq T(\omega)$
\[E_\omega[|Z(t)|]=E_\omega\Big[\sum_{v\in V(t)}\1_{\{v\text{ survives until }t\}}\Big]\]
\begin{equation}\label{eq:comp1}
=E[|V(t)|]S(t)=E[m^M] S(t)=e^{\lambda(m-1)t}S(t)\leq e^{-\frac{\eps}{2} t},
\end{equation}
where $M$ is a random variable whose law is
Poisson with parameter $\lambda t$. This implies $Z(t)\to 0$ for almost all environments.
\end{proof}

\section{The supercritical case}\label{sec:super}

For the proof in the supercritical case we will need to consider the random variable
\begin{equation}\label{stilddef}
\widetilde S(t)\coloneqq  P_\omega(\tau\geq t,X_t=0).
\end{equation}
It is intuitively clear that $\widetilde S(t)$ should decay to zero with the same exponential rate as $S(t)$, since the event $\{X(t)=0\}$ has probability decaying only with a polynomial rate, and therefore its contribution should be dominated by the contribution of the event $\{\tau\geq t\}$. This is stated in the following 

\begin{proposition}\label{prop:propSTilde}
It holds that
\begin{equation}\label{eq:decayat0}
\lim_{t\to\infty} \frac{1}{t}\E[\log \widetilde S(t)]=p(\kappa)
\end{equation}
and
\begin{equation}\label{eq:almostsuredecayat0}
\lim_{t\to\infty} \frac{1}{t}\log \widetilde S(t)=p(\kappa)\quad\text{ for $Q$-almost all }\omega.
\end{equation}
Moreover for any $t\geq 0$ we have
\begin{equation}\label{eq:finirate}
 \E[\log \widetilde S(t)]>-\infty.
\end{equation}
\end{proposition}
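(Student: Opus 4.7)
The upper bound $\widetilde S(t)\leq S(t)$ immediately gives $\limsup_{t\to\infty}t^{-1}\log\widetilde S(t)\leq p(\kappa)$ both $Q$-a.s.\ and in expectation, via parts (i)--(ii) of the Theorem of \cite{shiga}. The content of the proposition is therefore (a) the matching lower bound and (b) the finiteness (\ref{eq:finirate}). To dispose of (b), I would observe that on the $Q$-positive event $\{\omega^{(0)}(t)=0\}$ the walk that simply stays at $0$ gives $\widetilde S(t)\geq e^{-\kappa t}$, and for general $\omega$ use the fact that the disaster counts in a bounded box around $0$ on $[0,t]$ are Poisson (hence finite a.s.) to build an explicit piecewise-constant path from $0$ back to $0$ that dodges them; its $P_\omega$-probability is bounded below by an explicit function of the local disaster counts, whose $\log$ has finite $Q$-expectation by elementary Poisson tail computations.

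For the lower bound on the exponential rate, I would exploit the Markov property at time $t-s$:
\[\widetilde S(t)\geq\sum_{y\in B_L}P_\omega\big(\tau\geq t-s,\,X_{t-s}=y\big)\cdot \widetilde S^{y}_{\theta_{t-s}\omega}(s),\]
where $\widetilde S^{y}_{\omega'}(s):=P_{\omega'}(\tau\geq s,\,X_s=0\mid X_0=y)$, $\theta_{t-s}\omega$ is the time-shifted environment, and $B_L$ is the box of radius $L$. I would choose $s=s(t)$ growing much slower than $t$ (say a power of $\log t$) and $L=L(t)$ of order $\sqrt{t}\log t$. By (\ref{expectaswell}) together with the concentration estimate (Proposition \ref{concin}), on a $Q$-typical $\omega$ one has $P_\omega(\tau\geq t-s)\geq e^{(p(\kappa)-\eps)(t-s)}$, while the uniform moment bounds (Proposition \ref{prop:unifMom}) guarantee that the conditional law of $X_{t-s}$ given $\{\tau\geq t-s\}$ puts non-negligible mass in $B_L$. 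Averaging the remaining factor over $y\in B_L$ and using $Q$-translation invariance together with (\ref{eq:finirate}) applied to shifted environments reduces the task to a polynomial-in-$s$ lower bound on the annealed probability of going from a random $y\in B_L$ back to $0$ in time $s$ while surviving, which by a union bound over explicit straight-line paths is easily obtained and then absorbed into the exponential rate. The $Q$-a.s.\ statement (\ref{eq:almostsuredecayat0}) will then follow from (\ref{eq:decayat0}) plus Proposition \ref{concin} via a Borel--Cantelli argument along an exponentially spaced time grid, with interpolation between grid points by rough monotonicity estimates.

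The main obstacle is the quantitative control of the conditional position $X_{t-s}$ and of the averaged return factor $\sum_{y\in B_L}\widetilde S^{y}_{\theta_{t-s}\omega}(s)$. A priori, atypical environments may trap the conditioned walk in distant favorable regions, from which no return to $0$ is feasible at subexponential cost. This is where the stochastic domination argument advertised in the introduction comes in: it allows a comparison between the conditioned walk and a walk in a modified environment where uniform moment bounds are accessible, bypassing the lack of a direct spectral gap for the time-varying killed dynamics and pinning down the behavior on the full-measure set of environments.
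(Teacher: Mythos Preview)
Your sketch of \eqref{eq:finirate} by building explicit dodging paths is plausible, though the paper instead factors $\widetilde S(s)\geq\prod_{i=1}^s P^{(i)}_\omega(\tau\geq i,X(i)=0)$ for integer $s$ and bounds each factor via Jensen and Proposition~\ref{prop:unifMom}; this is more uniform and reuses existing machinery.

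For the lower bound on the rate, your approach is genuinely different from the paper's and has a real gap. Your parameter choice is inconsistent: with $L\sim\sqrt{t}\log t$ and $s$ a power of $\log t$, a walk starting at $y$ near $\partial B_L$ simply cannot reach $0$ in time $s$, so $\widetilde S^y_{\theta_{t-s}\omega}(s)=0$ for most $y\in B_L$. Repairing this forces you either to take $s$ comparable to $t$ (losing the ``$s$ is negligible'' step) or to show that the conditioned walk $X_{t-s}\mid\{\tau\geq t-s\}$ is localized much closer to $0$ than $B_L$---which is precisely the hard, unproven point you flag in your last paragraph. Proposition~\ref{prop:unifMom} does not give this; it controls moments of the survival probability conditioned on an endpoint, not the typical location of the conditioned walk.

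The paper sidesteps this entirely with a symmetry/doubling trick plus fractional moments (following Carmona--Hu). The key observation is that for \emph{every} $x\in\Z^d$,
\[
\E[\log S(2t,0)]\;\geq\;2\,\E[\log S(t,x)],
\]
because $P^{t,x}_\omega(\tau\geq 2t,X(2t)=0)$ has the same $Q$-law as $S(t,x)$. One then bounds $\E[\log P_\omega(\tau\geq t,\,X(t)\in B_t)]$ from above by a fractional moment with $\eps=t^{-3/4}$: Jensen and subadditivity of $x\mapsto x^\eps$ give
\[
\E[\log P_\omega(\tau\geq t,A_t)]\;\leq\;\tfrac{1}{\eps}\log\sum_{x\in B_t}\E\big[e^{\eps(\log S(t,x)-\E[\log S(t,x)])}\big]\,e^{\eps\,\E[\log S(t,x)]},
\]
the exponential centered moment is controlled by the concentration inequality (Proposition~\ref{concin}), and the remaining $\E[\log S(t,x)]$ is replaced by $\tfrac12\E[\log S(2t,0)]$ via the doubling bound. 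The $|B_t|$ and moment corrections are $o(t)$, yielding $\liminf_{t\to\infty}(2t)^{-1}\E[\log S(2t,0)]\geq p(\kappa)$. The almost-sure statement then follows from \eqref{eq:decayat0} and Proposition~\ref{concin} by Borel--Cantelli, as you say. The point is that no control of the conditional position is ever needed: the doubling identity makes the bound uniform in $x$.
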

\begin{proof}[Proof of Theorem \ref{thm:main} (supercritical case)]
Assume
\begin{equation}\label{eq:positive}\lambda(m-1)+p(\kappa)>0\, .\end{equation}
We will find a branching process with i.i.d.\ offspring distributions embedded in $Z$. More precisely, we introduce a process $(A(k))_{k\in \N}$ taking values in $\N$, such that we have $A(k)\leq |Z(k T)|$ for all $k\in\N$ and some $T>0$. The claim then follows by showing that in almost all environments the event $\{A(k)>0$ infinitely often$\}$ has positive probability.

Fix some large $T$, and set $A(0)\coloneqq 1=|Z_0|$ and 
\[A(k)\coloneqq \Big|\Big\{v\in Z(kT)\colon X(iT,v)=0\text{ for all }i=0,...,k\Big\}\Big|\]
That is, for the process $A$ we only consider particles that return to the origin at times $T,2T,3T,...\,$. Note that every particle that contributes to $A(k)$ is the descendant of a particle that contributed to $A(k-1)$. 

To see that $(A(k))_k$ has i.i.d.\ offspring distributions, we recall from Section \ref{sec:notation} the notation $Z(t)\cap\{0\}$ and $Z^{t,A}$. Using those, we can define the sequence $(q^{(k)})_{k\in\N}$ of offspring distributions by
\[q^{(k)}(j)=P_\omega\big(\big|Z^{(k-1)T,\{0\}}({kT})\cap \{0\}\big|=j\big)\quad \text{for }j\in\N.\]
Note that $q^{(k)}$ only depends on the environment in the interval $[(k-1)T,kT)$, and $(q^{(k)})_k$ is therefore an i.i.d.\ sequence in the space of probability measures on $\N$. 

We let $m^{(k)}$ denote the expectation of $q^{(k)}$. By a well-known result on branching processes with i.i.d.\ offspring distributions, see \cite{smith, tanny}, the survival probability of 
$(A(k))_{k\in\N}$ is positive for almost all environments if
\begin{equation}\label{eq:cond1}\E[\log(1-q^{(1)}(0))]>-\infty\end{equation}
and
\begin{equation}\label{eq:cond2}\E[\log(m^{(1)})]>0\, .\end{equation}
We can write $m^{(1)}$ as 
\[m^{(1)}=\sum_{j\in \N}j q^{(1)}(j)=\sum_jjP_\omega\big(\big|Z({T})\cap \{0\}\big|=j\big)=E_\omega\big[|Z(T)\cap\{0\}|\big]\, .\]
Recall the definition of $\widetilde S(t)$ in \eqref{stilddef} By the same computation as in (\ref{eq:comp1}) we get
\begin{equation}\label{mrandom}
m^{(1)}=e^{\lambda(m-1)T}\widetilde S(T).
\end{equation}
In order to give a lower bound for the quantity in (\ref{eq:cond1}), we compare the branching process to the random walk of a single particle: We choose a path by starting in the root, and whenever there is more than one descendant, we follow its first child. Let $F(t)$ be the event that this construction succeeds up to time $t$, that is the currently observed particle always has at least one descendant. We have
\[P^{\kappa,\lambda}(F(t))=E[(1-q(0))^M]=\exp(-\lambda tq(0))\]
where $M$ is the number of branching events along this path. Note that $M$ has distribution Poisson($\lambda t$), so that $1-q^{(1)}(0)\geq E[F(T)]\widetilde S(T)$. By \eqref{eq:finirate} in Proposition \ref{prop:propSTilde} we see that indeed
\[ \E\big[\log(1-q^{(1)}(0))\big]\geq -\lambda Tq(0)+\E[\log \widetilde S(T)]>-\infty.\]

We can now conclude: By \eqref{eq:decayat0} and \eqref{mrandom}, we find for every $\eps>0$ some $T$ large enough that
\[ \E[\log(m^{(1)})]\geq T\big(\lambda(m-1)+(p(\kappa)-\eps)\big).\]
By (\ref{eq:positive}), we can satisfy (\ref{eq:cond2}) by choosing $\eps$ small enough, finishing the proof.
\end{proof}
\begin{remark}\label{rm:proof}
The proof shows in fact that in the supercritical case, the process survives locally with positive probability. Using results of \cite{tanny} about branching processes with i.i.d.\ offspring distributions we also see that in the supercritical case, the number of particles grows exponentially fast.
\end{remark}

It remains to prove Proposition \ref{prop:propSTilde}. This will take up most of Section \ref{sec:super}: We start by proving a uniform moment bound in Section \ref{sec:auxiliary} using comparison techniques from \cite{shiga} and some results about stochastic orders. In Section \ref{sec:concentration} we use this to get a concentration inequality, which is necessary for the proof of Proposition \ref{prop:propSTilde} in Section \ref{sec:stilde}. 


\subsection{A uniform moment bound}\label{sec:auxiliary}
The following proposition is key to proving the concentration inequality in the next section:

\begin{proposition}\label{prop:unifMom}
For every $\delta\in(0,1)$ there is some $C>0$ such that 
\[\sup_{x\in\Z^d}\E\Big[P_\omega(\tau\geq 1|X(1)=x)^{-\delta}\Big]<C<\infty.\]
\end{proposition}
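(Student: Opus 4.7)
The plan is to convert the uniform moment bound into a uniform lower-tail estimate on the quenched conditional survival probability. Write $H_x(\omega):=P_\omega(\tau\ge 1,\, X(1)=x)$ and $p_1(x):=P(X(1)=x)$, so $G_x(\omega):=P_\omega(\tau\ge 1\mid X(1)=x)=H_x(\omega)/p_1(x)$. A short annealing computation gives $\E[H_x]=p_1(x)e^{-1}$: for any random walk path $\sigma$ with $\sigma(1)=x$, the local times $(L_y^\sigma)_y$ satisfy $\sum_y L_y^\sigma = 1$, and by $Q$-independence of the Poisson processes across sites, $\E[\1\{\sigma\text{ survives}\}] = \prod_y e^{-L_y^\sigma} = e^{-1}$. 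Hence $\E[G_x^{-\delta}]<C$ uniformly in $x$ is equivalent to a polynomial lower-tail estimate $Q(H_x \le r\, p_1(x)) \le C' r^{\alpha}$ with some $\alpha>\delta$, and this is what I aim to establish.

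To get the tail bound I would use the independence of $\omega$ across disjoint time slabs together with the stochastic-domination philosophy announced in the abstract. Fix $K\in\N$, split $[0,1]$ into $K$ equal time slabs $I_1,\dots,I_K$, and write $\omega_i$ for the restriction of $\omega$ to $I_i$; the $\omega_i$'s are $Q$-independent. Applying the Markov property of the random walk and restricting the resulting sum over intermediate positions to a single canonical choice $(y_i)_{i=0}^{K}$ with $y_0=0$, $y_K=x$ equispaced along a shortest path from $0$ to $x$, one obtains
\[
H_x(\omega)\ \ge\ \prod_{i=1}^{K}P_{\omega_i}\bigl(\tau\ge \tfrac1K,\, X(\tfrac1K)=\Delta_i \bigm| X(0)=0\bigr),
\]
where $\Delta_i:=y_i-y_{i-1}$. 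The $K$ factors $F_i(\omega_i)$ are $Q$-independent and, by spatial translation invariance of $Q$, their marginal laws depend only on the $\Delta_i$'s. A Chernoff-type bound applied to $\sum_i \log F_i$ reduces the target lower-tail bound to a uniform (in $\Delta$) exponential-moment estimate on $-\log F_1$, i.e.\ to essentially the same question at the shorter time horizon $1/K$.

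The main obstacle is the uniformity in $x$: when $\|x\|_1$ is large one cannot avoid large displacements $\Delta_i$ in the decomposition, the conditional bridge in each slab is forced to be nearly ballistic, and the crude per-slab lower bound by the survival of a single straight path is a $0/1$ random variable with infinite $(-\delta)$-moment. This is precisely where stochastic domination enters essentially: one must dominate $F_i$ from below not by a single-path indicator but by a Poissonian functional capturing the survival of a whole family of overlapping near-ballistic paths, whose $Q$-distribution has exponential moments exceeding $\delta$. Verifying this domination uniformly in the displacement (in particular, in the truly ballistic regime $\|\Delta_i\|_1 \gg 1/K \cdot \kappa$) is the substantive content of the proposition; I expect it to proceed by comparison with an i.i.d.\ percolation-style environment in the spirit of Shiga's framework.
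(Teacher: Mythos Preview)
Your proposal stops precisely where the proof begins. You correctly identify that the only issue is uniformity in the endpoint $x$, and you even diagnose why your time-slab decomposition does not resolve it: splitting $[0,1]$ into $K$ pieces turns one bridge with displacement $\|x\|_1$ into $K$ bridges with displacement roughly $\|x\|_1/K$ each, so the ballistic regime is still there at every scale and the recursion does not close. The suggested fix---replacing the single-path lower bound by ``a Poissonian functional capturing the survival of a whole family of overlapping near-ballistic paths''---is not carried out, and it is not clear how to make it yield a negative moment of order strictly larger than $\delta$ uniformly in the displacement. As written, this is a restatement of the difficulty, not a solution.

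The paper's argument is structurally different and avoids the large-$x$ regime altogether. One projects $\Z^d$ onto $\Z_2=\{0,1\}$ via the parity $\pi$ of the first coordinate and compares the original environment to the degenerate environment $\pi^{-1}(\widetilde\omega)$ in which all sites of a given parity share the same disasters; Shiga's Lemma 2.2 gives the inequality in the right direction. This reduces the supremum over $x\in\Z^d$ to two parity classes, for which a direct moment bound on the two-state walk is available. The remaining work is to show that conditioning the $\Z^d$ walk on $\{X(1)=x\}$ is no worse, for the $(-\delta)$-moment, than conditioning a rate-$\kappa/2$ walk on $\{X(1)\equiv x\}$; this is done by encoding the disaster pattern as a parity configuration in $\{0,1\}^{N+1}$, showing that the ``signature'' law under the first conditioning is majorized (in the Hardy--Littlewood--P\'olya sense) by that under the second, and invoking the convex-order characterisation of majorization. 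The two technical ingredients are a likelihood-ratio domination between the conditioned jump counts and a coupling of multinomial parity vectors. None of this is visible from your slab picture; the parity projection is the missing idea.
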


For the proof we use an equivalence relation $\equiv$ on $\Z^d$ defined by
\[(y_1,...,y_d)\equiv (z_1,...,z_d)\quad\iff\quad y_1= z_1\mod 2.\]
We will identify $\Z^d/_\equiv$ with $\Z_2=\{0,1\}$, and we use $\pi\colon \Z^d\to\{0,1\}$ to denote the projection. Let $\widetilde \omega$ be an environment on $\{0,1\}$, consisting as usual of two independent Poisson processes $\widetilde  \omega^{(0)}$ and $\widetilde \omega^{(1)}$ of rate $1$. We write $\lobar$ for the environment on $\Z^d$ given by
\[(\lobar)^{(y)}=\omega^{(\pi(y))}\quad\text{ for }y\in\Z^d.\]
Note that this is a degenerate environment on $\Z^d$, where all sites that share an equivalence class in $\equiv$ experience the same disasters. We will slightly abuse notation by writing $\E$ for the law of $\widetilde \omega$ as well.

First we need the following auxiliary lemma.
\begin{lemma}\label{lem:expmom}
Let $(\widetilde X(t))_{t\in[0,1]}$ be simple random walk on $\{0,1\}$ of jump rate $\kappa$. Then for any $p\in(0,1)$ we have
\begin{align}\label{eq:expmom1}\sup_{i=0,1}\E\big[P_{\widetilde \omega}^\kappa\big(\tau\geq 1,\widetilde X(1)=i\big)^{-p}\big]<\infty.\end{align}
while for $p\in(0,\frac 12)$ we have
\begin{align}\label{eq:expmom2}\sup_{t\in[0,1]}\E\big[P_{\widetilde \omega}^\kappa\big(\tau\geq t,\widetilde X(t)=0\big)^{-p}\big]<\infty.\end{align}
\end{lemma}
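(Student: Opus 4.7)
The plan is to derive an explicit formula for $P_{\widetilde\omega}^\kappa(\tau \geq T, \widetilde X(T) = i)$ conditional on $\widetilde\omega$, and then bound its negative moments using Dirichlet integrals. Assume by symmetry that the walk starts at $0$. If $\widetilde\omega$ has $N$ disasters at times $0 < t_1 < \cdots < t_N < T$ and sites $y_1, \ldots, y_N \in \{0, 1\}$, then survival to time $T$ ending at $i$ requires the walk's jump count $K_j$ in each of the $N+1$ inter-disaster gaps $\Delta t_j := t_{j+1} - t_j$ (with $t_0 := 0$, $t_{N+1} := T$) to have a prescribed parity $b_j \in \{0,1\}$ determined by $(y_1, \ldots, y_N, i)$. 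Since the $K_j$ are independent, each Poisson$(\kappa \Delta t_j)$-distributed, and since the parity of a Poisson$(\lambda)$ variable equals $b$ with probability $(1 + (-1)^b e^{-2\lambda})/2$, we obtain
\[P_{\widetilde\omega}^\kappa(\tau \geq T, \widetilde X(T) = i) = \prod_{j=0}^N \frac{1 + (-1)^{b_j} e^{-2\kappa \Delta t_j}}{2}.\]

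I would then lower-bound each factor. Factors with $b_j = 0$ exceed $1/2$, while factors with $b_j = 1$ equal $(1 - e^{-2\kappa \Delta t_j})/2$, which is bounded below by $c_\kappa \Delta t_j$ for $\Delta t_j \leq 1$ and some $c_\kappa > 0$. Writing $O := \{j : b_j = 1\}$ and $k := |O|$, this yields
\[P_{\widetilde\omega}^\kappa(\tau \geq T, \widetilde X(T) = i)^{-p} \leq C^{N+1} \prod_{j \in O} \Delta t_j^{-p}.\]
Conditionally on $N$, the disaster times are ordered uniforms on $[0, T]$, so the gap vector $(\Delta t_j)_{j=0}^N$ is distributed as $T$ times a symmetric Dirichlet$(1, \ldots, 1)$ vector. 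Exchangeability of the gaps together with the Dirichlet integral gives
\[\E\biggl[\prod_{j \in O} \Delta t_j^{-p} \,\bigg|\, N, |O| = k\biggr] = T^{-kp} \cdot \frac{N!\,\Gamma(1-p)^k}{\Gamma(N+1-kp)},\]
which is finite provided $p < 1$ and $k \leq N+1$.

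The last step is to sum over $N \sim \mathrm{Poisson}(2T)$. Up to bounded factors, the contribution from $N = n$ is of order $T^n \cdot T^{-kp} / \Gamma(n+1-kp)$. For part~\eqref{eq:expmom1} with $T = 1$, Stirling shows the sum converges for any $p < 1$. For part~\eqref{eq:expmom2}, uniformity as $T = t \to 0$ requires $n - kp \geq 0$; maximizing $k$ over sequences $(y_1, \ldots, y_N)$ (using the constraint $|O| \equiv i \equiv 0 \pmod 2$), the worst configuration is $N = 1$ with the single disaster located at site $0$, which forces a round trip ($k = 2$) and yields exponent $1 - 2p$. This is precisely the source of the threshold $p < 1/2$.

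The main obstacle is the bookkeeping in the final step: one must carefully identify which disaster configurations maximize $|O|$ (hence the Dirichlet cost) and verify that the resulting series converges \emph{uniformly} in $t \in [0, 1]$. The role of the round-trip configuration $(N, k) = (1, 2)$ as the unique obstruction in the short-time regime explains why the threshold drops from $p < 1$ in \eqref{eq:expmom1} to $p < 1/2$ in \eqref{eq:expmom2}.
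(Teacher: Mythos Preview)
Your argument is correct and is, in essence, an explicit unwinding of the proof the paper merely cites from \cite{shiga}. The paper's proof is not self-contained: it points to Shiga's Lemma~2.4 and records the modified coefficients $\beta_n'=\Gamma(1-p)^2\Gamma(2-2p)^{n-1}t^{(2-2p)n-1}/\Gamma(2n(1-p))$, whose Gamma-function structure and $t$-exponent match exactly what you obtain from the Dirichlet integral when the disaster configuration forces the maximal number of parity flips (your $k=N+1$ case with $N+1=2n$). Your presentation has the advantage of being self-contained and of making transparent why $p<1/2$ appears: it is precisely the condition $1-2p\geq 0$ needed for the single-disaster round-trip term to stay bounded as $t\downarrow 0$. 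The final bookkeeping you flag does go through without difficulty: once you know every achievable pair $(n,k)$ with $i=0$ satisfies $n-kp\geq 0$ for $p<1/2$, each term $T^{n-kp}$ is bounded by its value at $T=1$, and the $T=1$ series converges super-exponentially by Stirling since $\Gamma((n+1)(1-p))$ dominates any exponential in $n$.
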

\begin{proof}
This is a modification of the proof of Lemma 2.4 in \cite{shiga}, where the integrability of $P_{\widetilde \omega}(\tau\geq 1)^{-p}$ is shown. We quickly sketch how the proof can be modified: 

Note that the bound in (2.24) of \cite{shiga} is actually a bound for $P_\omega(\tau\geq t,X(t)=1)$. By slightly modifying the argument we obtain a similar bound for $P_\omega(\tau\geq t,X(t)=0)$, where on the right hand side we have to replace $C(t)C_1(t)^nt_1^{1-2p}$ by $C(t)^2C_1(t)^{n-1}t_1^{-p}$. It is clear that this does not make a difference for the convergence of the sum appearing in the display after (2.26), where the coefficients $\beta_n$ have to be replaced by
\begin{align*}
\beta_n':=\frac{\Gamma(1-p)^2\Gamma(2-2p)^{n-1}}{\Gamma(2n(1-p))}t^{(2-2p)n-1}.
\end{align*}
This gives the first claim, and the second claim follows because the coefficients in that sum can be chosen increasing in $t$. This is clear for $C(t)$ and $C_1(t)$, and for $n\geq 1$ and $p\in(0,\frac 12)$ the $\beta_n'$ are increasing as well. 
\end{proof}

\begin{proof}[Proof of Proposition \ref{prop:unifMom}]
By Lemma 2.2 in \cite{shiga} we have
\[\E\big[(P^{\kappa}_{\omega}(\tau\geq 1,X(1)=x))^{-\delta}\big]\leq \E\big[(P^{\kappa}_{\pi^{-1}(\widetilde \omega)}(\tau\geq 1,X(1)=x))^{-\delta}\big]\]
and dividing both sides by $\big(P^\kappa(X(1)=x)\big)^{-\delta}$ gives
\[\E\big[(P^{\kappa}_{\omega}(\tau\geq 1|X(1)=x))^{-\delta}\big]\leq \E\big[(P^{\kappa}_{\pi^{-1}(\widetilde \omega)}(\tau\geq 1|X(1)=x))^{-\delta}\big]\]
Moreover by \eqref{eq:expmom1} we have
\[\sup_{x\in\Z^d}\E\Big[\big(P^{\frac\kappa 2}_{\pi^{-1}(\widetilde \omega)}(\tau\geq 1|X(1)\equiv x)\big)^{-\delta}\Big]<\infty.\]
So the claim follows once we show that 
\begin{equation}\label{eq:mitte}
\E\Big[\big(P^{\kappa}_{\pi^{-1}(\widetilde \omega)}(\tau\geq 1|X(1)=x)\big)^{-\delta}\Big]\leq \E\Big[\big(P^{\frac\kappa 2}_{\pi^{-1}(\widetilde \omega)}(\tau\geq 1|X(1)\equiv x)\big)^{-\delta}\Big]
\end{equation}

For simplicity we only treat the case where $x\equiv 0$, noting that the case $x\equiv (1,0,...,0)$ is similar. For a fixed environment $\widetilde \omega$, let $N$ be the number of disasters in $[0,1]$. We write $T_1,...,T_N$ for the disaster times in increasing order, and $E_1,...,E_N$ for their locations. Let us write $\P^{T_1,...,T_N}$ resp. $\E^{T_1,...,T_N}$ for the law resp. expectation of $E_1,...,E_N$ conditioned on $N$ and $T_1,...,T_N$, which is simply the uniform distribution on $\{0,1\}^N$. Notice that for any event $A$ and function $f:(0,1]\to\R$ we can write
\begin{align*}
\E^{T_1,...,T_N}\big[f(P_{\pi^{-1}(\widetilde\omega)}^\kappa(\tau\geq 1|A))\big]=\frac{1}{2^N}\sum_{(e_1,...,e_N)\in\{0,1\}^N} f(\alpha(e_1,...,e_N))
\end{align*}
where $\alpha$ is a measure on $\{0,1\}^N$ defined by
\begin{align*}
\alpha(e_1,...,e_N)\coloneqq P^\kappa\big(\pi(X(T_1))=1-e_1,...,\pi(X(T_N))=1-e_N\big|A\big).
\end{align*}

Before we make use of this observation we introduce a different encoding for the disaster locations which will be convenient later: Given a realization $\widetilde\omega$, we define its configuration $I_{\widetilde\omega}=\big(I_{\widetilde\omega}(i)\big)_{i=0}^N$ by
\begin{align*}
I_{\widetilde\omega}=\Big(\1\{E_1=0\},\1\{E_{2}\neq E_1\},...,\1\{E_N\neq E_{N-1}\},\1\{E_N=0\}\Big)\in\{0,1\}^{N+1}.
\end{align*}
The intuition is that $I_{\widetilde \omega}$ encodes the necessary jumps for the random walk. To see this we notice that if $\{I_{\widetilde\omega}(i)=1\}$ for some $i\in\{0,...,N\}$, the process has to switch sites in $[T_i,T_{i+1})$ if it wants to survive and at time $1$ end up in a location equivalent to $0$. (Recall that $T_0= 0$ and $T_{N+1} = 1$).

We let $\Sigma\subseteq\{0,1\}^{N+1}$ be the set of configurations with an even number of $1$s. Observe that $\P^{T_1,...,T_N}(I_{\widetilde\omega}\in\Sigma)=1$, and that $I_{\widetilde\omega}$ has the uniform distribution on $\Sigma$. 

On the other hand, we define for a \cadlag process $X$ on $\Z^d$ its signature $\I$ as
\[\I:=\Big(\pi\big(X(T_1)\big),\pi\big(X(T_2)-X(T_1)\big),...,\pi\big(X(T_N)-X(T_{N-1})\big),\pi\big(X(1)-X(T_N)\big)\Big).\]
We notice that $\{\I\in\Sigma\}=\{X(1)\equiv 0\}$ and $\{\I=I_{\widetilde\omega}\}=\{\tau\geq 1,X(1)\equiv 0\}$, so that we can introduce two probability measures $\mu$ and $\nu$ on $\Sigma$ by setting
\begin{equation}\label{eq:munudef}
\mu(I):=P^{\kappa}\big(\I=I\big|X(1)=x\big)\quad\text{ and }\quad\nu(I):=P^{\frac\kappa 2}\big(\I=I\big|X(1)\equiv x\big).
\end{equation}
Notice that we now have
\[P_{\pi^{-1}(\widetilde\omega)}^\kappa(\tau\geq 1|X(1)=x)=\mu(I_{\widetilde \omega})\quad\text{ and }\quad P_{\pi^{-1}(\widetilde\omega)}^{\frac\kappa 2}(\tau\geq 1|X(1)\equiv x)=\nu(I_{\widetilde \omega}).\]

So we have two probability measures which are evaluated at a random point $I_{\widetilde\omega}$, and we want to compare the expectations of $f(\mu(I_{\widetilde\omega}))$ and $f(\nu(I_{\widetilde\omega}))$ for the convex function $f(x)=x^{-\delta}$. 

For this we recall some results about stochastic orders: For two probability measures $\mu$ and $\nu$ on $\Sigma$, we say that $\mu$ is \textbf{majorized} by $\nu$, denoted $\mu\preceq_M\nu$, if 
\[\sum_{i=1}^k\mu(a_i)\leq\sum_{i=1}^k\nu(b_i)\quad\text{ for all }k=1,...,2^N,\]
where $\Sigma=\{a_1,...,a_{2^N}\}=\{b_1,...,b_{2^N}\}$, and the ordering is chosen in such a way that
\[\mu(a_1)\geq ...\geq \mu(a_{2^N})\quad\text{ and }\quad\nu(b_1)\geq...\geq \nu(b_{2^N}).\]

The intuition for $\mu\preceq_M\nu$ is that the mass of $\mu$ is more spread out than the mass of $\nu$, so that the random evaluation $\mu(I_{\widetilde\omega})$ should be more random than $\nu(I_{\widetilde\omega})$. The following result makes this precise in terms of the convex stochastic order:
\begin{lemma}[Corollary 1.5.37 in \cite{mullerstoyan}]\label{lem:corollary}We have $\mu\preceq_M\nu$ if and only if 
\begin{align*}
\frac 1{|\Sigma|}\sum_{\sigma\in\Sigma}f(\nu(\sigma))\leq \frac 1{|\Sigma|}\sum_{\sigma\in\Sigma}f(\mu(\sigma))\quad\text{for all convex functions }f:(0,1]\to\R.
\end{align*}
\end{lemma}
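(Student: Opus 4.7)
The lemma is the restriction to probability vectors of the classical Hardy--Littlewood--P\'olya theorem characterizing majorization via convex functions, so the natural strategy is to deduce it from that classical result rather than re-derive everything from scratch. Because $\mu$ and $\nu$ are probability measures on the finite set $\Sigma$, they correspond to probability vectors of equal total mass $1$, so the equal-total-mass hypothesis required by the classical formulation is automatic.

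For the direction ``$\mu\preceq_M\nu \Rightarrow$ convex-function inequality'', I would invoke the doubly stochastic matrix characterization of majorization: $\mu\preceq_M\nu$ is equivalent to the existence of a doubly stochastic matrix $D=(D_{\sigma\sigma'})_{\sigma,\sigma'\in\Sigma}$ relating the two vectors linearly, say $\mu(\sigma)=\sum_{\sigma'}D_{\sigma\sigma'}\nu(\sigma')$ in the direction consistent with the sign convention used in \cite{mullerstoyan}. Given any convex $f\colon(0,1]\to\R$, Jensen's inequality applied row by row yields $f(\mu(\sigma))\leq\sum_{\sigma'}D_{\sigma\sigma'}f(\nu(\sigma'))$; summing over $\sigma$ and using the column-sum condition $\sum_\sigma D_{\sigma\sigma'}=1$ gives the asserted inequality after dividing by $|\Sigma|$. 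The existence of $D$ is precisely the Hardy--Littlewood--P\'olya theorem, and indeed $D$ may be written as a convex combination of permutation matrices by Birkhoff--von Neumann, which is the cleanest way to see the Jensen step.

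For the converse, I would test the convex-function inequality against the one-parameter family $f_t(x)=(x-t)_+$ for $t\in(0,1]$, each of which is convex. A short computation shows that $\sum_{\sigma}f_t(\mu(\sigma))=\sum_{i\le k_\mu(t)}(\mu(a_i)-t)$, where $k_\mu(t)$ is the number of atoms of $\mu$-mass strictly greater than $t$, and analogously for $\nu$. Evaluating at the finitely many breakpoints $t=\mu(a_k)$ and $t=\nu(b_k)$ and combining with the equality $\sum_\sigma\mu(\sigma)=\sum_\sigma\nu(\sigma)=1$ recovers exactly the partial-sum comparisons appearing in the paper's definition of $\mu\preceq_M\nu$.

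The only mildly delicate point is the restriction of $f$ to the punctured interval $(0,1]$ rather than to all of $[0,1]$, which matters because a convex function on $(0,1]$ may blow up at $0$. However, since the sums involve only the finitely many values $\mu(\sigma),\nu(\sigma)\in(0,1]$, one may extend $f$ harmlessly to $[0,1]$ by setting $f(0)=\lim_{x\downarrow 0}f(x)\in(-\infty,+\infty]$, so there is no genuine obstacle; this is also where one uses that the ambient measures $\mu,\nu$ take strictly positive values on all atoms of $\Sigma$ in the application. Since the full statement is exactly Corollary~1.5.37 in \cite{mullerstoyan}, the ``proof'' in the paper is to cite that reference, and the outline above merely reconstructs the classical Hardy--Littlewood--P\'olya argument for the reader's benefit.
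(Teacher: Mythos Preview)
Your identification is correct: the paper gives no proof of this lemma and simply cites Corollary~1.5.37 in \cite{mullerstoyan}, so there is nothing to compare against. Your reconstruction via the doubly stochastic matrix characterization plus Jensen for one direction, and the test functions $f_t(x)=(x-t)_+$ for the converse, is the standard Hardy--Littlewood--P\'olya argument and is fine.

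One point does deserve explicit attention rather than the hedge ``in the direction consistent with the sign convention''. With $\mu\preceq_M\nu$ as defined in the paper (partial sums of the decreasing rearrangement of $\mu$ bounded by those of $\nu$), one has $\mu=D\nu$ for some doubly stochastic $D$, and your Jensen step then yields
\[
\sum_{\sigma\in\Sigma} f(\mu(\sigma))\;\leq\;\sum_{\sigma\in\Sigma} f(\nu(\sigma)).
\]
This is the correct Hardy--Littlewood--P\'olya inequality, and it is exactly what the paper \emph{uses} two lines later when it concludes $\E^{T_1,\dots,T_N}[f(\mu(I_{\widetilde\omega}))]\leq\E^{T_1,\dots,T_N}[f(\nu(I_{\widetilde\omega}))]$. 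But it is the \emph{reverse} of the inequality displayed in the lemma statement, which has $\mu$ and $\nu$ interchanged. So the displayed formula is a typo; your sentence ``gives the asserted inequality'' silently absorbs this and would be clearer if you flagged the discrepancy outright.

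A minor remark: the restriction of $f$ to $(0,1]$ is not a genuine issue for the lemma itself, since the classical result is stated for real vectors and convex $f$ on $\R$; positivity of all atoms matters only downstream, where the specific choice $f(x)=x^{-\delta}$ is applied.
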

Indeed we have
\begin{lemma}\label{lem:majo}
Let $\mu$ and $\nu$ be defined as in \eqref{eq:munudef}. Then $\mu\preceq_M\nu$.
\end{lemma}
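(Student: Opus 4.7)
The plan is to work conditionally on the Poisson disaster times, write explicit formulas for $\mu$ and $\nu$ as measures on $\Sigma$, and then exhibit $\mu$ as a doubly stochastic average of $\nu$, which by the Hardy--Littlewood--P\'olya theorem gives the desired majorization. Fix the disaster times $0=:T_0<T_1<\cdots<T_N<T_{N+1}:=1$ and interval lengths $L_i:=T_{i+1}-T_i$. Because $\pi$ depends only on the first coordinate of $X$, the signature $\I$ is a function of the one-dimensional simple random walk $X_1$, and the conditioning $X(1)=x$ reduces (by independence of coordinates) to $X_1(1)=x_1$. Under the $\{0,1\}$-walk at rate $\kappa/2$, the parities of the jump counts in disjoint intervals are independent Bernoullis, and conditioning on $\I\in\Sigma$ yields the product form
\[
\nu(I) \;\propto\; \prod_{i=0}^{N} \bigl(1+(-1)^{I_i}q_i\bigr), \qquad q_i := e^{-\kappa L_i},\ I\in\Sigma.
\]

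For $\mu$ I would decompose $X_1$ as a Poisson number of $\pm 1$ steps at uniformly-placed times in $[0,1]$. Conditioning on $X_1(1)=x_1$ and expanding in the Fourier--Walsh basis on $\{0,1\}^{N+1}$ expresses the Walsh coefficient of $\mu$ at $\epsilon\in\{0,1\}^{N+1}$ as $\phi(w(\epsilon))$, with $w(\epsilon)=\sum_{i:\epsilon_i=1}L_i$ and $\phi$ a positive function arising from the endpoint conditioning (for $x_1=0$ it is a normalized ratio of modified Bessel functions). The corresponding Walsh coefficient of $\nu$ is $e^{-\kappa w(\epsilon)}$. The goal is then to rewrite $\mu$ as a convex combination $\mu=\int \nu_s\,d\lambda(s)$ of product-form measures $\nu_s$ of the same type as $\nu$, each corresponding to a twisted choice of parameters, and to verify $\nu_s\preceq_M\nu$ for each $s$.

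The main obstacle is the second step: showing that every $\nu_s$ appearing in the mixture is majorized by $\nu$ itself. Equivalently, one needs a monotonicity statement to the effect that, within the family of measures $\nu_{(r_i)}(I)\propto\prod_i(1+(-1)^{I_i}r_i)$ on $\Sigma$, the sorted probability vector becomes more spread out as the $|r_i|$ decrease towards zero. This is a nontrivial combinatorial fact, which I would try to establish by constructing explicit parity-preserving bijections of $\Sigma$ (for instance, simultaneous flips of pairs of bits) that transfer mass from heavily-weighted configurations to less-weighted ones in a controlled way. Granted this monotonicity, the mixture representation together with Hardy--Littlewood--P\'olya yields $\mu=D\nu$ for a doubly stochastic matrix $D$ on $\Sigma$, and hence $\mu\preceq_M\nu$ as claimed.
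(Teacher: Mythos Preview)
Your outline has two load-bearing steps that are not carried out, and at least the first appears to fail in the form you state it. You assert that $\mu$ can be written as a convex combination $\int \nu_s\,d\lambda(s)$ of \emph{product-form} measures $\nu_{(r_i)}(I)\propto\prod_i(1+(-1)^{I_i}r_i)$. But the natural decomposition (conditioning on the number $L$ of parity-changing jumps) gives components whose Walsh transform is $\epsilon\mapsto (1-2w(\epsilon))^L$; for $L\ge 2$ this does \emph{not} factor over the bits, so these components are not of product form, and you give no alternative mixture. Moreover, $w\mapsto E[(1-2w)^L]$ is not completely monotone on $[0,1]$, so a Bernstein-type representation by exponential product measures is unavailable. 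Second, the monotonicity claim (that $\nu_{(r_i')}\preceq_M\nu_{(r_i)}$ whenever all $|r_i'|\le |r_i|$) is only announced; you yourself flag it as the ``main obstacle'' and propose to look for bijections without exhibiting any.

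The paper takes a genuinely different route that avoids both problems. It uses the mixture over jump counts, $\mu(I)=E[P_L(\mathcal I_L=I)]$ and $\nu(I)=E[P_K(\mathcal I_K=I)]$ with $K\preceq_{st}L$, and explicitly notes that although $P_l(\mathcal I_l\in\cdot)\preceq_M P_k(\mathcal I_k\in\cdot)$ for every $k\le l$, this does \emph{not} transfer to the mixtures. The key idea is to introduce an auxiliary partial order $\preceq$ on $\Sigma$ (via partial sums, after reordering the interval lengths increasingly), to show that each $P_{2k}(\mathcal I_{2k}\in\cdot)$ is $\preceq$-decreasing and that $k\mapsto P_{2k}(\mathcal I_{2k}\in A)$ is nonincreasing on $\preceq$-decreasing sets $A$ (this is Lemma~\ref{lem:final}, proved by an explicit two-ball coupling), and then to apply a theorem of Brualdi--Dahl on majorization for posets. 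So where you hope for a product-form/Hardy--Littlewood--P\'olya argument, the paper replaces your unproven monotonicity by a concrete combinatorial monotonicity with respect to a tailor-made partial order.
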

Hence Lemma \ref{lem:corollary} implies 
\[\E^{T_1,...,T_N}\big[f\big(\mu(I_{\widetilde\omega})\big)\big]\leq \E^{T_1,...,T_N}\big[f\big(\nu(I_{\widetilde\omega})\big)\big]\]
for all convex functions $f:(0,1]\to\R$. Inserting $f:x\mapsto x^{-\delta}$, this in particular shows \eqref{eq:mitte} by taking expectations.
\end{proof}

It remains to show Lemma \ref{lem:majo}. If $Z$ is a \cadlag process, we call $t$ a jump time of $Z$ if $Z(t)\not\equiv Z(t^-)$, and we write $R_Z$ for the number of jumps times of $Z$ in $[0,1]$.

\begin{lemma}\label{lem:numjumps}
Let $X$ resp. $Y$ be simple random walks on $\Z^d$ with jump rate $\kappa$ resp. $\frac\kappa 2$. Then 
\[R_Y\big|\{Y(1)\equiv x\}\quad \preceq_{st}\quad R_X\big|\{X(1)=x\}\]
where $\preceq_{st}$ denotes stochastic domination.
\end{lemma}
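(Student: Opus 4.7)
The plan is to reduce the $d$-dimensional statement to one dimension, and then to establish the domination via a monotone likelihood ratio argument. Since $\equiv$ identifies points of $\Z^d$ sharing the parity of the first coordinate, $R_Z$ counts only those jumps of $Z$ that alter $\pi(Z)$, namely the jumps in the $\pm e_1$ direction. These jumps form a Poisson process of rate $\kappa/d$ for $X$ (resp.\ $\kappa/(2d)$ for $Y$), independent of the jumps in the remaining coordinate directions. Since $\{X(1)=x\}=\{X^1(1)=x_1\}\cap\{(X^2,\ldots,X^d)(1)=(x_2,\ldots,x_d)\}$ splits into two events in independent $\sigma$-algebras, conditioning $R_X$ on $\{X(1)=x\}$ is the same as conditioning on $\{X^1(1)=x_1\}$. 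The analogous reduction for $Y$ turns the claim into
\[R_{Y^1}\mid\{Y^1(1)\equiv x_1\}\ \preceq_{st}\ R_{X^1}\mid\{X^1(1)=x_1\},\]
where $X^1$ and $Y^1$ are $\pm 1$ walks on $\Z$ at rates $\alpha:=\kappa/d$ and $\alpha/2$ respectively.

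For this one-dimensional statement I would compute the conditional densities via Bayes' rule, using that conditionally on $n$ jumps the walk's endpoint is the signed sum of $n$ iid uniform $\pm 1$ steps. This yields
\[P(R_{X^1}=n\mid X^1(1)=x_1)\ \propto\ \frac{\alpha^n}{n!}\binom{n}{(n+x_1)/2}2^{-n}\]
for $n\geq|x_1|$ with $n\equiv x_1\pmod 2$, and
\[P(R_{Y^1}=n\mid Y^1(1)\equiv x_1)\ \propto\ \frac{(\alpha/2)^n}{n!}\]
for all $n\geq 0$ with $n\equiv x_1\pmod 2$ (since the parity of $Y^1(1)$ equals the parity of the number of steps). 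The likelihood ratio between the two distributions is thus proportional to $\binom{n}{(n+x_1)/2}$, and a short computation shows that advancing $n$ to $n+2$ along the relevant parity class multiplies this ratio by $4(n+2)(n+1)/((n+2)^2-x_1^2)\geq 1$, the inequality reducing to $(n+2)(3n+2)+x_1^2\geq 0$. Hence the likelihood ratio is non-decreasing in $n$, and the standard implication ``likelihood ratio order implies usual stochastic order'' yields the domination.

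The main point requiring a bit of care is the support mismatch: the $X^1$-conditional distribution is supported on $\{n\geq|x_1|\}$, while the $Y^1$-conditional lives on every parity-matching $n\geq 0$. This is easy to handle via tail probabilities: for $k\leq|x_1|$ one has $P(R_{X^1}\geq k\mid X^1(1)=x_1)=1$, so the inequality is trivial, and for $k>|x_1|$ both distributions charge $\{n\geq k\}$ and the monotone-likelihood-ratio step applies directly. Aside from this bookkeeping, the proof is a short calculation once the one-dimensional reduction is in place.
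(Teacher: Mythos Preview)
Your proof is correct and follows essentially the same approach as the paper: reduce to the first coordinate, then establish the likelihood ratio order, which implies stochastic domination. The paper also checks the likelihood ratio condition, writing it as $P(Z_k=x_1)/P(Z_l=x_1)\le 2^{l-k}$ for a discrete-time simple random walk $Z$ on $\Z$, and proves this via the Markov-property bound $P(Z_l=x_1)\ge P(Z_k=x_1)P(Z_{l-k}=0)\ge P(Z_k=x_1)2^{-(l-k)}$; your explicit computation of the ratio $\binom{n+2}{(n+2+x_1)/2}/\binom{n}{(n+x_1)/2}$ is the same inequality taken one step at a time.
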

\begin{proof}
It is easier to show that 
\[R_Y\big|\{Y(1)\equiv x\}\quad \preceq_{lr}\quad R_X\big|\{X(1)=x\}\]
where $\preceq_{lr}$ denotes domination in the likelihood ratio order, see for example Chapter 1.4 in \cite{mullerstoyan}, where it is also shown that $\preceq_{lr}$ is stronger than $\preceq_{st}$. 

We have to check that for $k,l\in\N$ of the same parity as $x_1$ and such that $|x_1|\leq k\leq l$, the following holds:
\[P^{\kappa}(R_X=k|X(1)=x)P^{\frac\kappa 2}(R_Y=l|Y(1)\equiv x)\leq P^\kappa(R_X=l|X(1)=x)P^{\frac\kappa 2}(R_Y=k|Y(1)\equiv x)\]

We apply the definition of conditional probability and cancel the terms that appear on both sides (note that
$P^{\frac\kappa 2}(Y(1)\equiv x | R_Y=l) =1$ since $l$ has the same parity as $x_1$). Then we can rewrite the equation as
\[\frac{P(Z_k=x_1)}{P(Z_l=x_1)}\leq \frac{P(A=l)P(A'=k)}{P(A=k)P(A'=l)}=2^{l-k}.\]
Here $(Z_i)_{i\in\N}$ is a discrete time simple random walk on $\Z$, and $A$ resp. $A'$ is a Poisson random variable of parameter $\frac{\kappa}d$ resp. $\frac\kappa {2d}$. But this inequality holds, since by the Markov property
\[P(Z_l=x_1)\geq P(Z_k=x_1)P(Z_{l-k}=0)\geq P(Z_k=x_1)2^{-(l-k)}.\]
\end{proof}

Now we are ready to show Lemma \ref{lem:majo}.
\begin{proof}
Let us define weights $p_0,...,p_N$ by $p_0:=T_1$, $p_N:=1-T_N$ and $p_i:=T_{i+1}-T_i$ for all other values of $i$. We note that $\mu$ and $\nu$ do not depend on the order of $T_1,...,T_N$, and therefore we can rearrange them to satisfy
\begin{equation}\label{eq:ordered}
p_0\leq p_1\leq ...\leq p_N.
\end{equation}
Now for $k\in\N$, let $M_k=(M_k(0),...,M_k(N))$ denote a random variable having the multinomial distribution with $k$ trials, and write $P_k$ for its law. That is, $k$ indistinguishable balls are thrown in bins numbered $0,...,N$ such that each ball independently lands in bin $i$ with probability $p_i$, and $M_k(i)$ is the final number of balls in bin $i$. 
We define 
\begin{equation}\label{eq:defIk}\I_k:=\big(\1\{M_k(0)\text{ is odd}\},...,\1\{M_k(N)\text{ is odd}\}\big)\in\{0,1\}^{N+1}.\end{equation}
We will often use $\I_k$ interchangeably with the set $\{i:M_k(i)\text{ is odd}\}\subseteq \n$, where $\n=\{0,...,N\}$. Consider random variables $K$ and $L$ taking values in $\N$ with
\[P(L=l)=P^{\kappa}(R_X=l|X(1)=x)\quad\text{ and }\quad P(K=k)=P^{\frac \kappa 2}(R_Y=k|Y(1)\equiv x).\]
Observe that by conditioning on $R_X$ and $R_Y$ we get
\begin{equation}\label{eq:mixture}\mu(I)=E[P_{L}(\I_L=I)]\quad\text{ and }\quad \nu(I)=E[P_{K}(\I_K=I)]\end{equation}

Indeed, conditional on a random walk having $K$ jumps in $[0,1]$, each jumps occurs in $[T_{i},T_{i+1})$ with probability $p_i$, independently of the other jump times, and the process switches sites between $T_i$ and $T_{i+1}$ exactly if there is an odd number of jumps in $[T_i,T_{i+1})$.

One might be tempted to think that we are done now, since for all fixed values $k\leq l$ we can easily show that $P_l(\I_l\in\cdot)\preceq_M P_k(\I_k\in\cdot)$ holds: The distribution of $\I_l$ can be obtained from the distribution of $\I_k$ by the application of a doubly stochastic matrix, and this is an equivalent characterization of $\preceq_M$, see for example Theorem 1.5.34 in \cite{mullerstoyan}. Moreover from Lemma \ref{lem:numjumps} we know that there exists a coupling between $K$ and $L$ such that $K\leq L$ holds with probability one. However the majorization order is not stable under taking mixtures, so this does not give the conclusion.

Instead we define a partial order $\preceq$ on $\Sigma$ by 
\[(i_0,...,i_N)\preceq (j_0,...,j_N)\quad\iff\quad\sum_{l=0}^ki_l\leq \sum_{l=0}^k j_l\quad\text{ for all }k=0,...,N.\]

We will show in Lemma \ref{lem:final} that if we increase the number of jumps from $2k$ to $2k+2$, the mass in $\Sigma$ will become less concentrated on the ''small values`` with respect to this partial order, which is what we need to conclude: 

First note that both $\mu$ and $\nu$ are decreasing in $\preceq$, as defined in part (i) of Lemma \ref{lem:final} below. From \eqref{eq:mixture} we see that this follows by taking expectations in \eqref{eq:increasing}, and noting that both $K$ and $L$ are supported on the even numbers. Moreover we have 
\[\mu(A)\leq \nu(A)\quad\text{ for all decreasing sets }A.\]
To see this, recall from Lemma \ref{lem:numjumps} that we can couple $K$ and $L$ such that $K\leq L$ holds with probability one, and apply \eqref{eq:mixture} together with \eqref{eq:increasingSet}. We have checked conditions (1) and (2) from \cite{majorization}, and $\mu\preceq_M\nu$ now follows from Theorem 3 in that work.
\end{proof}
It remains to show
\begin{lemma}\label{lem:final}
\begin{itemize}
 \item[(i)]$P_{2k}(\I_{2k}\in\cdot)$ is decreasing in $\preceq$. That is, for all $I,J\in\Sigma$ we have
 \begin{equation}\label{eq:increasing}I\preceq J\quad \implies \quad P_{2k}(\I_{2k}=I)\geq P_{2k}(\I_{2k}=J).\end{equation}
 \item[(ii)]Let $A\subseteq \mathcal P(\Sigma)$ be a decreasing set, i.e. $J\in A$ implies $I\in A$ for all $I$ with $I\preceq J$. Then
 \begin{equation}\label{eq:increasingSet}
 P_{2k+2}(\I_{2k+2}\in A)\leq P_{2k}(\I_{2k}\in A).
 \end{equation}
\end{itemize}
\end{lemma}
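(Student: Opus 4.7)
The plan is to prove (i) by decomposing any move $J\leadsto I$ with $I\preceq J$ into a sequence of elementary down-moves, each of which both decreases in $\preceq$ and weakly increases $P_{2k}(\I_{2k}=\cdot)$, and then to derive (ii) from (i) via an involution/orbit argument that uses the decreasingness of $A$.

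For (i), I will work with two elementary down-moves. The \emph{adjacent right-swap} replaces $(J_i,J_{i+1})=(1,0)$ by $(0,1)$ at positions $(i,i+1)$; the \emph{pair annihilation} flips $(J_a,J_b)=(1,1)$ to $(0,0)$ at any two positions $a<b$. Both moves produce a $J'\in\Sigma$ with $J'\preceq J$. To see that they increase the multinomial probability, I fix the counts $m_l$ for $l$ outside the two affected indices and reduce the comparison to an inner sum $\sum_{m+m'=M}\binom{M}{m}p_a^m p_b^{m'}\bigl[\1\{\cdots\}-\1\{\cdots\}\bigr]$, which by the binomial theorem collapses to $\pm(p_b-p_a)^M$; since $p_a\leq p_b$ by assumption, the sign works out so that $P_{2k}(J)\leq P_{2k}(J')$ in both cases. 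It remains to show reachability: any $J$ can be reduced to any $I\preceq J$ in $\Sigma$ by a finite composition of these moves. If $|I|=|J|$, the relation $I\preceq J$ forces the $l$-th one of $J$ (listed in increasing order) to lie weakly to the left of the $l$-th one of $I$, and moving each one rightward to its target by adjacent swaps gives a path that remains $\succeq I$ throughout. If $|I|<|J|$, I first reduce $|J|$ by pair annihilation: setting $D_k=\text{prefsum}_k(J)-\text{prefsum}_k(I)\geq 0$ and noting $D_N=|J|-|I|\geq 2$, I choose $b=\min\{k:D_l\geq 2\text{ for all }l\geq k\}$ and $a=\max\{k\leq b-1:D_{k-1}=0\}$. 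The minimality of $b$ forces $D_{b-1}=1$, from which a short check yields $J_a=J_b=1$, $I_a=I_b=0$, and $D_k\geq\1\{k\geq a\}+\1\{k\geq b\}$, so annihilation at $(a,b)$ produces $J'$ with $I\preceq J'\preceq J$ and $|J'|=|J|-2$; iterating reduces to the equal-size case.

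For (ii), conditioning on the two additional balls gives
\[P_{2k+2}(I)=s\,P_{2k}(I)+\sum_{i<j}2p_ip_j\,P_{2k}(I\oplus\{i,j\}),\]
where $s=\sum_ip_i^2$. Since $1-s=\sum_{i<j}2p_ip_j$, subtracting $P_{2k}(I)$, summing over $I\in A$, and using the bijection $I\mapsto I\oplus\{i,j\}$ between $A$ and $A\oplus\{i,j\}$ yields
\[P_{2k+2}(\I_{2k+2}\in A)-P_{2k}(\I_{2k}\in A)=\sum_{i<j}2p_ip_j\bigl[P_{2k}(A\oplus\{i,j\})-P_{2k}(A)\bigr],\]
so it suffices to prove $P_{2k}(A\oplus\{i,j\})\leq P_{2k}(A)$ for every $i<j$. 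The involution $\phi(I)=I\oplus\{i,j\}$ is fixed-point free on $\Sigma$, and a direct prefix-sum check in each of the four cases $(I_i,I_j)\in\{0,1\}^2$ shows that within every orbit $\{I,\phi(I)\}$ the two elements are always $\preceq$-comparable, the smaller one $I_-$ being whichever has bit $i$ equal to $0$. Decreasingness of $A$ then excludes the configuration in which only $I_+\in A$, so every orbit contributes either $0$ (both or neither in $A$) or $P_{2k}(I_-)-P_{2k}(I_+)\geq 0$ to $P_{2k}(A)-P_{2k}(A\oplus\{i,j\})$, the inequality being part~(i). I expect the reachability step in (i) for the case $|I|<|J|$, where one must locate the pair $(a,b)$ by analysing the excess-walk $D$, to be the main technical obstacle; everything else reduces to direct computation.
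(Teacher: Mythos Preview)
Your proof is correct. Part~(i) follows essentially the same strategy as the paper: reduce $I\preceq J$ to a chain of elementary moves, each of which increases $P_{2k}$. The paper uses the two cases $I\subseteq J$ and a two-coordinate swap $I=I_0\cup\{a\},\ J=I_0\cup\{b\}$ with $b<a$; you use pair annihilation and adjacent swaps, which is a cosmetic rearrangement of the same idea. Your reachability argument for $|I|<|J|$ via the excess walk $D_k$ is a little more explicit than the paper's one-line claim, and is fine as written (with the convention $D_{-1}=0$ so that $a=0$ is always admissible).

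Part~(ii) is where you genuinely diverge. The paper proves \eqref{eq:increasingSet} by constructing an explicit coupling of $\I_{2k}$ and $\I_{2k+2}$: it samples the two extra balls $(B,A)$, and on $\{B<A\}$ uses a common uniform variable to couple the parities at positions $A$ and $B$ so that $\I_{2k}\preceq\I_{2k+2}$ almost surely. Your argument instead writes $P_{2k+2}(I)$ as a mixture of $P_{2k}(I)$ and $P_{2k}(I\oplus\{i,j\})$, reduces to showing $P_{2k}(A\oplus\{i,j\})\leq P_{2k}(A)$, and verifies this by pairing $\Sigma$ into $\phi$-orbits, observing that the two elements of each orbit are $\preceq$-comparable, and then invoking part~(i). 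This is cleaner and more self-contained: it avoids the case analysis of the coupling construction and makes the role of part~(i) in part~(ii) transparent, whereas in the paper the coupling proof of (ii) is essentially independent of (i). The cost is that you do not obtain the almost-sure monotone coupling $\I_{2k}\preceq\I_{2k+2}$, but that stronger statement is not needed for Lemma~\ref{lem:majo}.
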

\begin{proof}
For $S\subseteq \n$ we write $M_k(S):=\sum_{i\in S}M_k(i)$. We recall the following fact about a binomial random variable $B_{n,p}$ with $n$ trials and success probability $p$:
\begin{equation}\label{eq:factbinom}P(B_{n,p}\text{ is even})=\frac 12\big(1+(1-2p)^n\big).\end{equation}
\textbf{Part (i)}: For $S,T\subseteq\n$ disjoint, we consider the function
\[f^S_T(r):=P\big(M_k(i)\text{ is even }\forall i\in S,M_k(j)\text{ is odd }\forall j\in T\big|M_k(S\cup T)=r\big).\]
Whenever $S$ or $T$ is the empty set, we drop it from the notation and just write $f_T$ or $f^S$. We first show \eqref{eq:increasing} in two special cases:

Assume that $I\subseteq J$, with $J\setminus I=:\{a_1,...,a_{2m}\}$. Let $A$ be the event $A:=\{I\subseteq \I_{2k}\subseteq J\}$ and set $S_j:=\{a_{2j-1},a_{2j}\}$. Then
\[P_{2k}(\I_{2k}=I)=P_{2k}(A)E\Big[\prod_{j=1}^m f^{S_j}(M_{2k}(S_j))\Big|A\Big].\]
Clearly $f^{S_j}(m)$ is only positive if $m$ is even, and in this case $f^{S_j}(m)\geq f_{S_j}(m)$ follows from \eqref{eq:factbinom}. But this means 
\[P_{2k}(\I_{2k}=J)=P_{2k}(A)E\Big[\prod_{i=1}^m f_{S_j}(M_{2k}(S_j))\Big|A\Big]\leq P_{2k}(\I_{2k}=I).\]
Next we assume that $|I|=|J|$ and that $I$ and $J$ only differ in two coordinates, that is $I=I_0\cup\{a\}$ and $J=I_0\cup\{b\}$ for some $b<a$. Let $B$ be the event $B:=\big\{I_0\subseteq \I_{2k}\subseteq I_0\cup\{a,b\}\big\}$. Then
\begin{align*}P_{2k}(\I_{2k}=I)=&P(B)E\Big[f_{a}^{b}\big(M_{2k}(\{a,b\})\big)\Big|B\Big]\\
\geq &P(B)E\Big[f_{b}^{a}\big(M_{2k}(\{a,b\})\big)\Big|B\Big]=P_{2k}(\I_{2k}=J).
\end{align*}
For the inequality we have used that for $m$ odd we have
\[f_{a}^{b}(m)=P(B_{m,p}\text{ is even})\geq P(B_{m,p}\text{ is odd})=f_{b}^{a}(m)\]
where $p=\frac{p_{b}}{p_{a}+p_{b}}$. Note that $a>b$ and \eqref{eq:ordered} imply $p\leq \frac 12$.

Now the general case follows from the observation that for any $I\preceq J$ we can find $I_0\preceq ... \preceq I_r$ such that $I_0=I$ and $I_r\subseteq J$, and with the property that $I_{i+1}$ and $I_{i}$ only differ in two coordinates, as defined above.

\textbf{Part (ii)}: We do this by constructing a coupling $(\I_{2k},\I_{2k+2})$ with the property that $\I_{2k}\preceq \I_{2k+2}$ holds with probability one, from which \eqref{eq:increasingSet} follows. 

For this, let $(B,A)$ be chosen from $\{(b,a)\colon 0\leq b\leq a\leq N\}$ according to
\[P\big((B,A)=(b,a)\big)=2p_ap_b\1\{b<a\}+p_a^2\1\{a=b\}\]
and let $M$ be independently sampled with the multinomial distribution with $2k$ trials. On the event $\{A=B\}$ we define $\I_{2k}$ from $M$ according to the definition, and set $\I_{2k+2}$ equal to $\I_{2k}$. 

In the case where $B<A$, we first fix the coupling on $\n\setminus\{A,B\}$ by
\[\I_l(i):=\1\{M(i)\text{ is odd}\}\quad\text{ for }i\notin\{A,B\}\text{ and }l\in\{2k,2k+2\}.\]
Then we set $R:=M(A)+M(B)$ and $p:=\frac{p_B}{p_A+p_B}$, and consider an independent random variable $U$ distributed uniformly in $[0,1]$. From this we define
\begin{align*}
\I_{2k}(B)&:=\1\{U\leq P(B_{R,p}\text{ is odd})\} \\
\I_{2k+2}(B)&:=\1\{U\leq P(B_{R,p}\text{ is even})\}
\end{align*}
Finally, for $l$ equal to $2k$ or $2k+2$, we set 
\begin{equation}\label{eq:parity}\I_l(A):=R-\I_l(B)\mod(2).\end{equation}

We claim that this is indeed the desired coupling. First note that we can sample a realization of the multinomial distribution $M_{2k+2}$ with $2k+2$ trials by sampling $M$ together with two additional balls $A$ and $B$ as described above. If the extra balls end up in the same bin, then the parity of all coordinates of $M$ and $M_{2k+2}$ will agree, and we can take $\I_{2k}=\I_{2k+2}$. 

Otherwise adding $A$ and $B$ will flip the parity of $M(A)$ and $M(B)$. So conditionally on $\{M(A)+M(B)=R\}$ we have sampled $\I_{2k}(B)$ and $\I_{2k+2}(B)$ with the correct laws, which then forces us to choose $\I_{2k}(A)$ and $\I_{2k+2}(A)$ as in \eqref{eq:parity}.

But now \eqref{eq:ordered} and $B<A$ imply $p=\frac{p_B}{p_A+p_B}\leq \frac 12$, so from \eqref{eq:factbinom} we obtain 
\[P(B_{R,p}\text{ is odd})=\frac 12-\frac 12(1-2p)^R\leq \frac 12\leq P(B_{R,p}\text{ is even}).\]
Therefore $\I_{2k}(B)\leq \I_{2k+2}(B)$, which implies $\I_{2k}\preceq \I_{2k+2}$. 

More precisely, if $R$ is even we have 
\[(\I_{2k}(B),\I_{2k}(A))=(1,1)\implies (\I_{2k+2}(B),\I_{2k+2}(A))=(1,1)\]
so that $\I_{2k}\subseteq \I_{2k+2}$ with probability one. If $R$ is odd, we note that 
\[(\I_{2k}(B),\I_{2k}(A))=(1,0)\quad\implies \quad(\I_{2k+2}(B),\I_{2k+2}(A))=(1,0).\]
On the other hand, on $\{(\I_{2k}(B),\I_{2k}(A))=(0,1)\}$ we can have either $\I_{2k+2}=\I_{2k}$ or $(\I_{2k+2}(B), \I_{2k+2}(A)) = (1,0)$. In the second case we have strict inequality, $\I_{2k}\prec \I_{2k+2}$.
\end{proof}

\subsection{A concentration inequality}\label{sec:concentration}

We write
\[S(t,x)\coloneqq P_\omega(\tau\geq t,X(t)=x).\]
With the previous moment bound at hand, we can now proceed to prove a concentration inequality for the sequences $\big(S(t,x)\big)_{t\geq 0}$ where the bounds do not depend on $x$. We follow the proof of Proposition 3.2.1 in \cite{yoshida}.
\begin{proposition}\label{concin}
There exist $c>0$ and $C>0$, such that $\eps\in(0,c)$ implies 
\begin{equation}\label{eq:conc}Q\Big(\big|\log S(t,x)-\E[\log S(t,x)]\big|>\eps t\Big)\leq 2\exp(-C\eps^2t)\end{equation}
for all $t\in\N$ and $x\in\Z^d$.
\end{proposition}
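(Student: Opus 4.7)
The plan is a martingale argument along the filtration generated by successive unit time slabs of the environment, combined with the Chernoff bound for sub-Gaussian martingale differences; the crucial input will be Proposition \ref{prop:unifMom}, which provides uniform control on the slab-wise fluctuations. Concretely, I would set $\mathcal{F}_k := \sigma(\omega^{(y)}|_{[0,k)} : y \in \Z^d)$ for $k \in \{0,1,\dots,t\}$ and $\omega_k := \omega|_{[k-1,k)}$; the $\omega_k$ are i.i.d.\ by the independent-increments property of the defining Poisson processes. Setting $M_k := \E[\log S(t,x) \mid \mathcal{F}_k]$ produces a martingale with $M_0 = \E[\log S(t,x)]$ and $M_t = \log S(t,x)$, whence
\[\log S(t,x) - \E[\log S(t,x)] = \sum_{k=1}^t D_k, \qquad D_k := M_k - M_{k-1}.\]

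To analyse $D_k$, I would Markov-decompose $S(t,x)$ at the endpoints of the $k$-th slab. With $S_1^{(k-1)}(y) := P_\omega(\tau \geq k-1, X(k-1)=y)$ and $S_3^{(k)}(z,x)$ the analogous survival probability for paths going from $z$ at time $k$ to $x$ at time $t$, one has
\[S(t,x) = \sum_{y,z\in\Z^d} S_1^{(k-1)}(y)\,\Psi_k(y,z)\,S_3^{(k)}(z,x),\]
where $\Psi_k(y,z)$ is the probability of going from $y$ to $z$ in time $1$ while avoiding disasters in $[k-1,k)$. Importantly, $S_1^{(k-1)}$ is $\mathcal{F}_{k-1}$-measurable, $S_3^{(k)}$ is $\sigma(\omega|_{[k,\infty)})$-measurable (hence conditionally independent of $\mathcal{F}_k$ given $\mathcal{F}_{k-1}$), and $\Psi_k$ depends on the environment only through $\omega_k$.

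The core step---and main obstacle---is to prove uniform sub-Gaussian bounds on the differences, i.e.\ to find constants $\alpha_0,C_*>0$ independent of $t,x,k$ such that $\E[e^{\alpha D_k}\mid\mathcal{F}_{k-1}]\leq e^{C_*\alpha^2}$ for $|\alpha|\leq\alpha_0$. I would do this by symmetrization: introduce an independent copy $\omega_k'$ of $\omega_k$ and let $S'$ denote $S(t,x)$ computed with $\omega_k$ replaced by $\omega_k'$. Then $\omega_k'\isd\omega_k$ is independent of $\mathcal{F}_k$, so $D_k=\E[\log(S/S')\mid\mathcal{F}_k]$, and Jensen's inequality reduces the task to bounding $\E[(S/S')^\alpha\mid\mathcal{F}_{k-1}]$. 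Using $\Psi_k(y,z;\omega_k)\leq P^\kappa(X(1)=z-y)$ in the numerator and factoring out the weights $w_{y,z}:=S_1^{(k-1)}(y)P^\kappa(X(1)=z-y)S_3^{(k)}(z,x)$, one can bound $S/S'$ by the reciprocal of a $w_{y,z}$-weighted average of the conditional survival ratios $\rho_{y,z}(\omega_k'):=\Psi_k(y,z;\omega_k')/P^\kappa(X(1)=z-y)$, which by translation invariance have the same law as $P_{\omega'}(\tau\geq 1\mid X(1)=z-y)$. Two applications of Jensen's inequality (for the reciprocal and for $\alpha\in(0,1)$) give
\[(S/S')^\alpha\leq\sum_{y,z}(w_{y,z}/W)\,\rho_{y,z}(\omega_k')^{-\alpha},\qquad W:=\sum_{y,z}w_{y,z},\]
and taking $\E[\,\cdot\mid\mathcal{F}_{k-1}]$ together with the uniform-in-$(y,z)$ estimate of Proposition \ref{prop:unifMom} yields a finite uniform bound $\E[(S/S')^\alpha\mid\mathcal{F}_{k-1}]\leq K(\alpha)$; the symmetric argument with $S'/S$ handles the lower tail. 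The martingale centering $\E[D_k\mid\mathcal{F}_{k-1}]=0$ together with these one-sided Laplace bounds gives sub-Gaussian behaviour for $|\alpha|\leq\alpha_0$ via the Taylor expansion of the conditional log-Laplace. Iterating the tower property then yields $\E[\exp(\alpha\sum_k D_k)]\leq e^{C_*\alpha^2 t}$ for $|\alpha|\leq\alpha_0$, and a standard Chernoff/Markov argument optimised over $\alpha\in(0,\alpha_0]$ produces
\[Q\bigl(\log S(t,x)-\E[\log S(t,x)]>\eps t\bigr)\leq\exp(-\eps^2 t/(4C_*))\]
for $0<\eps\leq 2C_*\alpha_0$, with the symmetric bound on the lower tail; this establishes \eqref{eq:conc} with $c:=2C_*\alpha_0$ and $C:=1/(4C_*)$. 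The main delicacy is that $\Psi_k$ is unbounded below, so a naive bounded-differences approach fails---Proposition \ref{prop:unifMom} is precisely what is needed to compensate, and it is important that the uniform moment bound there is indexed by the endpoint, which lines up with the endpoint of the slab appearing in the Markov decomposition.
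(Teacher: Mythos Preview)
Your proposal is correct and follows essentially the same route as the paper: a slab-by-slab martingale decomposition along the filtration $(\mathcal F_k)_{k=0}^t$, with Proposition~\ref{prop:unifMom} supplying the uniform exponential-moment control on the slab-wise increments and a Chernoff bound concluding. The only cosmetic difference is that the paper compares $S$ to the quantity $S_i$ obtained by \emph{deleting} the disasters in $[i-1,i)$ (which gives the monotone relation $S\le S_i$ and thus removes the absolute value for free, then invokes Lemma~A.1 of \cite{yoshida}), whereas you resample the slab with an independent copy and handle both tails via symmetrization; both reductions lead to the same weighted-average-plus-Jensen step against Proposition~\ref{prop:unifMom}.
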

\begin{proof}
We will drop the dependence on $t$ and $x$ in the notation, and only write $S$ for $S(t,x)$. Let $\omega_i$ be the environment that contains all disasters $(t,y)$ of $\omega$ except for those with $t\in[i-1,i)$. We now consider the filtration $\big(\mathcal F_i\big)_{i=0}^t$ with
\[\mathcal F_i\coloneqq \sigma\big(\oy(s)\colon s<i,y\in\Z^d)\]
and the random variables $\big(S_i\big)_{i=1}^t$ given by
\[S_i\coloneqq P_{\omega_i}(\tau\geq t, X(t)=x).\]
Notice that $\E[\log S_i|\mathcal F_i]=\E[\log S_i|\mathcal F_{i-1}]$. Now by Lemma A.1 in \cite{yoshida}, we obtain
\[Q\big(\big|\log S-\E[\log S]\big|>\eps t\big)\leq 2\exp(-C\eps^2t)\]
for some explicit constant $C>0$ once we have shown that
\[\E\Big[e^{\delta|\log S-\log S_i|}\Big|\mathcal F_{i-1}\Big]\leq A\]
holds for some $\delta >0$ and for some $A>0$ not depending on $i$, $t$ or $x$. We have $S\leq  S_i$, and therefore 
\begin{equation}\label{alphaundeta}
e^{\delta|\log S-\log S_i|} = \Big(\frac{S}{S_i}\Big)^{-\delta}=\Big(\sum_{y,z}\alpha_{y,z}\eta_{y,z}\Big)^{-\delta}
\end{equation}
where 
\[\alpha_{y,z}\coloneqq P_{\omega_i}\big(X(i-1)=y,X(i)=z\big|\tau\geq t, X(t) = x\big)\]
and
\[\eta_{y,z}\coloneqq P_\omega^{(i-1,y),(i,z)}\big(\tau \geq i\big).\]
Here $P^{(r,y),(s,z)}$ is the law of a random walk starting at time $r$ in $y$ and conditioned to end up in $z$ at time $s$.
To see that \eqref{alphaundeta} holds true, note that 
\begin{align*}
\alpha_{y,z}= P_{\omega_i}\big(X(i-1)=y,X(i)=z, \tau\geq t, X(t) = x\big)/S_i.
\end{align*}
To compute the expectation of the r.h.s. of \eqref{alphaundeta}, consider, for $i$ fixed, the sigma algebra 
\[\mathcal F^*\coloneqq \sigma\big(\oy_i(s)\colon s< t,y\in\Z^d\big)\]
From our choice of $\omega_i$ we clearly have $\mathcal F_{i-1}\subseteq \mathcal F^*$, and $\eta_{y,z}$ is independent of $\mathcal F^*$ while $\alpha_{y,z}$ is $\mathcal F^*$ measurable. So using Jensen's inequality we obtain
\[\E\Big[\Big(\frac{S}{S_i}\Big)^{-\delta}\Big|\mathcal F^*\Big]\leq \sum_{y,z}\alpha_{y,z}\E\big[\eta_{y,z}^{-\delta}\;\big]=\sum_{y,z}\alpha_{y,z}\E\Big[(P_\omega(\tau\geq 1|X(1)=z-y)^{-\delta}\Big]\]
By Proposition \ref{prop:unifMom} we have
\[\sup_{y,z}\E\Big[P_\omega\big(\tau\geq 1\big|X(1)=z-y\big)^{-\delta}\Big]=c<\infty\]
and therefore
\[\E\Big[\Big(\frac S{S_i}\Big)^{-\delta}\Big|\mathcal F_{i-1}\Big]\leq c\;\E\Big[\sum_{y,z}\alpha_{y,z}\Big|\mathcal F_{i-1}\Big]=c.\]
\end{proof}
  
\subsection{Proof of Proposition \ref{prop:propSTilde}}\label{sec:stilde}

Equipped with this concentration inequality we can now prove Proposition \ref{prop:propSTilde}. We follow the proof of Proposition 2.4 in \cite{carmona}.
\begin{proof}[Proof of Proposition \ref{prop:propSTilde}]
We start with \eqref{eq:finirate}, where we first argue that it is enough to assume $t\in\N$. We take any $t>0$ and set $s:=\lfloor t\rfloor$. Then
\begin{align}
\E[\log \widetilde S(t)]\geq &\E[\log \widetilde S(s)]+\E[\log P_{\omega}(\tau\geq t-s,X(t-s)=0)]\nonumber\\
\geq &\E[\log \widetilde S(s)]-\frac 1\delta\log \sup_{r\in[0,1]}\E\Big[\big(P_{\omega}(\tau\geq r,X(r)=0)\big)^{-\delta}\Big]\label{eq:tinn}.
\end{align}
Here we take $\delta\in(0,\frac 12)$, and for the second line we used Jensen's inequality. The second term in \eqref{eq:tinn} is finite by \eqref{eq:expmom2}. For the first term we have
\begin{align*}
\E[\log \widetilde S(s)]&\geq \E\Big[\log \prod_{i=1}^sP^{(i)}_\omega(\tau\geq i,X(i)=0)\Big]\\
&=s\log P(X(1)=0)+s\;\E[\log P_\omega(\tau\geq 1|X(1)=0)].
\end{align*}
where $P^{(i)}$ is the law of a random walk started at time $i-1$ at the origin. Note that the first term in the last line does not depend on $\omega$. But for all $\delta\in(0,1)$ we have
\begin{align*}\E[\log P_\omega(\tau\geq 1|X(1)=0)]&= -\frac 1\delta\E\big[\log \big(P_\omega(\tau\geq 1|X(1)=0)^{-\delta}\big)\big]\\
&\geq -\frac 1\delta \log\E\big[P_\omega(\tau\geq 1|X(1)=0)^{-\delta}\big]>-\infty,
\end{align*}
where we used Jensen's inequality, and the integrability follows from Proposition \ref{prop:unifMom}.

From \eqref{eq:decayat0} and the concentration inequality \eqref{eq:conc} we obtain \eqref{eq:almostsuredecayat0} by a simple Borel-Cantelli argument. Now to prove \eqref{eq:decayat0}, we remark that the existence of the limit 
\[\widetilde p(\kappa)=\lim_{t\to\infty}\frac 1t\E[\log \widetilde S(t)]\]
can be shown by subadditivity as usual, but this is not even necessary for our claim. Clearly we have
\[\limsup_{t\to\infty}\frac 1t\E[\log \widetilde S(t)]\leq p(\kappa).\]
We now prove the other direction, where by \eqref{eq:tinn} it is enough to consider $t\in\N$. Note that for any $x\in\Z^d$ we have
\[P_\omega(\tau\geq 2t, X(2t)=0)\geq P_\omega(\tau\geq t, X(t)=x)P^{t,x}(\tau\geq 2t, X(2t)=0).\]
Since $P^{t,x}(\tau\geq 2t, X(2t)=0)$ has the same law as $P_\omega(\tau\geq t, X(t)=x)$, we conclude 
that
\begin{equation}\label{compxando}
\E[\log S(2t,0)]\geq 2\E[\log S(t,x)].
\end{equation}
For $\gamma>0$ we consider a box $B_t\coloneqq \{x\in\Z^d\colon \|x\|\leq \gamma t\}$ and the event $A_t\coloneqq \{X(t) \in B_t \}$. Using standard large deviation techniques, we can choose $\gamma$ large enough such that
\[\log P(A_t^c)<tp(\kappa)\quad  \forall t\geq t_0.\]
Consequently we have
\begin{equation}\label{mitoderohnebox}
p(\kappa)=\lim_{t\to\infty}\frac 1t \E[\log P_\omega(\tau\geq t)]=\lim_{t\to\infty}\frac 1t \E[\log P_\omega(\tau\geq t, A_t)]\, .
\end{equation}
Take now $\eps\coloneqq t^{-\frac 34}$ and apply the fractional moments method:
\begin{align}
\E[\log P_\omega(\tau\geq t,A_t)] &= \frac 1\eps \E\big[\log \big(P_\omega(\tau\geq t,A_t)^\eps\big)\big]\notag\\
&\leq \frac 1\eps \log \E\big[P_\omega(\tau\geq t,A_t)^\eps\big] = \frac 1\eps \log \E\Big[\Big(\sum_{x\in B_t} S(t,x)\Big)^\eps\Big]\label{jensenfirst}\\
&\leq \frac 1\eps \log \E\Big[\sum_{x\in B_t} S(t,x)^\eps\Big]\label{andnowsu}\\
&=\frac 1\eps\log \sum_{x\in B_t}\E\big[e^{\eps (\log S(t,x)-\E[\log S(t,x)])}\big]e^{\eps \E[\log S(t,x)]},\label{jetztvergl} 
\end{align}
where we get \eqref{jensenfirst} from Jensen's inequality, and the inequality in \eqref{andnowsu} comes
from the general estimate $\left(\sum_{j=1}^N a_j\right)^\eps \leq \sum_{j=1}^N a_j^\eps$ for nonnegative $a_1, \ldots ,a_N$ and $0<\eps<1$. For the left factor of the summands in \eqref{jetztvergl} we compute, using \eqref{eq:conc},
\begin{align*}
&{ } \E\Big[\exp\big(\eps (\log S(t,x)-\E[\log S(t,x)]\big)\Big]\\
&\leq 1+\int_1^\infty Q\Big(\big|\log S(t,x)-\E[\log S(t,x)]\big|>t^{\frac 34}\log u\Big)\dd u\\
&\leq 1+2\int_1^\infty e^{-Ct^{\frac 12}(\log u)^2 }\dd u\coloneqq c(t).
\end{align*} 
Then we are left with
\begin{align*}\E[\log P_\omega(\tau\geq t, A_t)]&\leq \frac 1\eps \log c(t) + \frac 1\eps \log \sum_{x\in B_t}e^{\eps \E[\log S(t,x)]}\\
&\leq \frac 1\eps \log c(t) + \frac 1\eps \log |B_t| + \frac 12 \E[\log S(2t,0)],
\end{align*}
where we have used \eqref{compxando}. Dividing by $t$ and taking limits, taking into account \eqref{mitoderohnebox}, we obtain
\[ \liminf_{t\to\infty}\frac{1}{2t} \E[\log S(2t,0)]\geq p(\kappa).\]
\end{proof}


\section{The critical case}\label{sec:critical}

\subsection{Proof of Theorem \ref{thm:main} in the critical case}\label{sec:outline}

In this section we apply the technique going back to \cite{criticalcontact}, where it was used to show that the critical contact process dies out. We consider the critical process and assume that it survives, showing that this leads to a contradiction. 

For this we find a supercritical oriented site percolation process induced by the branching process in such a way that an infinite cluster in the percolation implies global survival of the branching process. In this coupling the event that a site is open can be decided by considering local events of the branching process, i.e. an event that only depends on a finite space-time box. The probability of this local event therefore depends continuously on the parameters of the model, so that the comparison to supercritical percolation still holds true if we push the parameters slightly into the subcritical phase. Since we know that the process dies out in this case, we have a contradiction.

This technique was also used in \cite{garetmarchand} for a discrete time, non-degenerate version of our model.

\begin{proof}[Proof of Theorem \ref{thm:main} (critical case)]
Fix $\kappa$ and $\lambda$ 
such that 
\begin{equation}\label{eq:gleichnull}\lambda(m-1)+p(\kappa)=0.\end{equation}
At the same time assume that
\begin{equation}\label{eq:survival}\P^{\kappa,\lambda}(Z\text{ survives}) >0.\end{equation}
For the contradiction we first consider the process in an environment with a higher disaster rate, making the process subcritical: Let us introduce the rate at which disasters appear as a new parameter of the model (until now, it was fixed to be $1$). Denote by $Q^{\alpha}$ the law such that $\big(\ox\big)_{x\in\Z^d}$ is a collection of independent Poisson processes of rate $\alpha>0$, and write $\P^{\alpha, \kappa,\lambda}$ for the annealed measure $Q^\alpha\otimes P^{\kappa,\lambda}_\omega$. Let $p(\alpha,\kappa)$ be the survival rate of a single particle in this environment (defined as in \eqref{pdef} but with an environment with disaster rate $\alpha$). We show at the end of this section that for any $\delta>0$ we have 
\begin{equation}
\label{strictforalpha}
\lambda(m-1)+p(1+\delta,\kappa)<0.
\end{equation}
Now using the same arguments as in the proof of the subcritical part of Theorem \ref{thm:main}, \eqref{strictforalpha} implies that for all $\delta>0$ we have
\begin{equation}
\label{eq:null}\P^{1+\delta,\kappa, \lambda}(Z\text{ survives}) =0.
\end{equation}
The contradiction will come from a coupling with oriented percolation, showing that for $\delta$ small enough we have
\begin{equation}\P^{1+\delta,\kappa, \lambda}(Z\text{ survives}) > 0.\end{equation}

Consider a box $D_n\coloneqq\{-n,...n\}^d$. Recalling the notation from Section \ref{sec:notation}, we consider for $s,L,T\in\R^+$, $x,y\in\Z^d$ and $n,S\in\N$ the event
\begin{align}\label{eq:deflocal}
A^{s, y}(L,T,n,S)\coloneqq \left\{\begin{matrix}\exists x\in\{L,...,3L\}\times\{-L,...,L\}^{d-1}, t\in[5T, 6T] \\ \text{ such that }(x+D_n,S^2)\leq 
Z^{\{s\}\times(y+D_n,S^2)}_{\{-5L,...,5L\}\times \{-3L,...,3L\}^{d-1}}(t).\end{matrix}\right\}
\end{align}
(The reason to use $S^2$ on the r.h.s. of \eqref{eq:deflocal} will become clear later).
In words, $A^{0,0}= A^{0,0}(L,T,n, S)$ is the event that starting from configuration $(D_n,S^2)$ at time $0$, those particles will propagate such that at some time $t\in[5T,6T]$ we find a copy $x+D_n$ of $D_n$ where again every site is occupied by at least $S^2$ particles. 
Because we consider the truncated process, the event has to be achieved by particles which do not leave a certain space-time box. 

We now state the key proposition which says that under the assumption \eqref{eq:survival}, we can make the probability for (an auxiliary version of) $A^{0,0}$ arbitrarily large:
\begin{proposition}\label{prop:prop1}
Assume (\ref{eq:survival}). For every $\eps>0$ there exist $L, T>0$ and $n,S\in\N$ such that
\begin{equation}\label{eq:concl1}\P^{1,\kappa,\lambda}\left(\;\begin{matrix}\exists x\in\{L+n,...,2L+n\}\times\{-L,...,L\}^{d-1}, t\in[T, 2T] \\ \text{ such that } (x+D_n,S^2)\leq Z^{(D_n,S^2)}_{\{-L,...,3L\}\times\{-L,...,L\}^{d-1}}(t)\end{matrix}\;\right)> 1-\eps.\end{equation}
\end{proposition}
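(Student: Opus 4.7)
The plan is a Bezuidenhout-Grimmett style block construction: using the hypothesis $\P(Z\text{ survives})>0$, I will combine monotonicity in the starting configuration, the exponential growth of Corollary \ref{expgrowth}, and a confinement estimate for the random-walk trajectories to produce, with probability at least $1-\eps$, a translate $x+D_n$ of the starting block populated by $\geq S^2$ particles on every site, with $x$ in the prescribed rightward slab. Parameters will be chosen in the order $n,S,T,L$, each absorbing a separate piece of the $\eps$-budget.

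First I boost survival by monotonicity. Let $p(\omega):=P_\omega(Z\text{ survives})$, which is $Q$-a.s.\ positive by Corollary \ref{localsurvival}. Conditional on the environment, the descendant trees of the $S^2(2n+1)^d$ initial particles of $(D_n,S^2)$ are independent, so
\[P_\omega\big((D_n,S^2)\text{ survives}\big)\;\geq\;1-\big(1-p(\omega)\big)^{S^2(2n+1)^d},\]
whose annealed expectation exceeds $1-\eps/4$ for $S$ large. Next, Corollary \ref{expgrowth} gives $c>0$ such that $|Z^{(D_n,S^2)}(t)|\geq e^{ct}$ eventually on survival, hence for any prescribed threshold $M$ we may choose $T$ large enough that $\P(|Z^{(D_n,S^2)}(T)|\geq M)\geq 1-\eps/2$. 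Finally, since a rate-$\kappa$ continuous-time simple random walk has exponential displacement tails and the expected number of descendants alive up to time $2T$ equals $e^{2\lambda(m-1)T}$, a union bound shows that for $L$ sufficiently large (depending on $T$), with probability at least $1-\eps/4$ no particle's trajectory exits $B:=\{-L,\dots,3L\}\times\{-L,\dots,L\}^{d-1}$ before time $2T$; on this good event $Z_B^{(D_n,S^2)}$ agrees with $Z^{(D_n,S^2)}$.

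The main obstacle is the final step: converting ``$\geq M$ particles confined to $B$'' into ``a densely populated $D_n$-translate located in the specific rightward slab $\{L+n,\dots,2L+n\}\times\{-L,\dots,L\}^{d-1}$''. My plan is a two-stage restart. In the first stage, on $[0,T/2]$, exponential growth furnishes $M_1$ conditionally independent seed particles inside $B$. In the second stage, each seed is evolved on $[T/2,2T]$, and I establish a directional-spreading lemma: starting from a single particle, there is a universal $\delta>0$ such that, with probability at least $\delta$, the descendants at some time in the window $[T,2T]$ contain a $D_n$-translate carrying $\geq S^2$ particles on every site and lying in the rightward slab. The lemma itself is proved by combining the positivity of one-particle survival (Corollary \ref{localsurvival}) with the reflection symmetry of the $Q$-law and of simple random walk, which forces survival to produce dense clusters at comparable rates in all $2d$ canonical directions; by choosing $T$ large so that the random-walk spread of order $T\kappa$ matches the slab distance $L$, the cluster in the $+e_1$ direction can be placed in the target region. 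Conditional independence of the $M_1$ seeds then amplifies $\delta$ to $1-(1-\delta)^{M_1}\geq 1-\eps/4$. The delicate calibration is to choose the four parameters consistently so that (i) $T\kappa$ reaches scale $L$, (ii) the growth factor $e^{cT}$ dominates $M_1$, which in turn dominates $S^2(2n+1)^d$, and (iii) the truncation box $B$ contains every trajectory; all three requirements are met by fixing $n,S$ first, then $T$, then $L$.
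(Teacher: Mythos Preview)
There is a circularity and a deeper structural gap. The circularity: you invoke Corollary~\ref{expgrowth}, but that result is established (via Remark~\ref{rm:proof}) only under the supercritical hypothesis $\lambda(m-1)+p(\kappa)>0$, whereas Proposition~\ref{prop:prop1} is needed precisely at the critical point, where \eqref{eq:survival} is being assumed for contradiction. You could retreat to Lemma~\ref{lem:lemma3}, but that gives only $|Z(t)|\to\infty$ with no rate.

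The more serious problem is that your confinement step and your reachability step are in direct conflict. You fix $T$ and then choose $L$ so large that a first-moment union bound over the $\sim e^{2\lambda(m-1)T}$ expected trajectories keeps every particle inside $B$ up to time $2T$; since a rate-$\kappa$ walk has displacement tail $e^{-TI(L/T)}$, this forces $L$ to grow at least linearly in $T$. But then the target slab $\{L+n,\dots,2L+n\}$ lies at distance of order $T$ from every seed, and the \emph{same} displacement estimate shows the expected number of particles reaching it is bounded by $e^{2\lambda(m-1)T}e^{-TI'(L/T)}$, which tends to zero for exactly those values of $L$ that made confinement work. Your ``universal $\delta>0$'' therefore cannot exist: the distance the seeds must traverse scales with $L$, which you choose \emph{after} $T$ (note also that the typical spread is $\sqrt{\kappa T}$, not $\kappa T$). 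A further issue is that your amplification $1-(1-\delta)^{M_1}$ silently treats the annealed $\delta$ as a quenched lower bound, although the $M_1$ seeds share the same environment. The paper avoids all of this via a dichotomy: either the truncated process $Z^{(D_n,S)}_L$ already survives with high probability for some finite $L$ (case~2), permitting unlimited retries inside the box, or it dies for every $L$ (case~1), which forces many particles to exit through the faces; an FKG inequality plus symmetry (Theorem~\ref{thm:fkg} and Corollary~\ref{cor:repair}) then puts many of them on the orthant $\O(L,T,e_1,\theta)$, which is \emph{already adjacent} to the target slab. Using particles that leave the box, rather than trying to keep all particles confined, is the mechanism your plan lacks.
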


We will use this to give an estimate for the probability of $A^{s,y}$ that holds uniformly for all $s$ and $y$ in some space-time box:

\begin{proposition}\label{prop:prop2}
Assume (\ref{eq:survival}). For every $\eps'>0$ there exist $L', T'>0$ and $n,S\in\N$ such that
\[\inf\Big\{\P^{1,\kappa,\lambda}\left(A^{s,y}(L',T',n,S)\right)\colon s\in[0,T'],y\in\{-L',...,L'\}^d\Big\}> 1-\eps'.\]
\end{proposition}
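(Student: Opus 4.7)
My plan is to prove Proposition~\ref{prop:prop2} by iterating Proposition~\ref{prop:prop1} along a chain of stopping times, using the strong Markov property together with the space-time translation invariance of the environment $\omega$. The intuition is that one application of Proposition~\ref{prop:prop1} propagates a full block $(z+D_n,S^2)$ forward by one step, so iterating the event sufficiently many times produces a chain of full blocks that eventually lands in the target window of $A^{s,y}$.

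Given $\eps'>0$, choose an integer $K$ (determined by the geometric calibration below) and set $\eta\coloneqq \eps'/K$. Apply Proposition~\ref{prop:prop1} with parameter $\eta$ to obtain $L_0,T_0,n,S$ for which \eqref{eq:concl1} holds with probability at least $1-\eta$. Take $L'$ and $T'$ to be suitable integer multiples of $L_0$ and $T_0$, roughly $L'=KL_0$ and $T'=KT_0$. Fix $(s,y)$ with $s\in[0,T']$ and $y\in\{-L',\dots,L'\}^d$. By translation invariance of $\omega$, Proposition~\ref{prop:prop1} applied to $Z^{\{s\}\times(y+D_n,S^2)}$ gives, with probability at least $1-\eta$, a stopping time $\tau_1\in[s+T_0,s+2T_0]$ and a position $z_1\in y+\{L_0+n,\dots,2L_0+n\}\times\{-L_0,\dots,L_0\}^{d-1}$ such that $(z_1+D_n,S^2)$ is a sub-configuration of the truncated process at time $\tau_1$. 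By the strong Markov property at $\tau_1$, I would re-apply Proposition~\ref{prop:prop1} to the shifted starting configuration $(z_1+D_n,S^2)$ at time $\tau_1$, obtaining $(\tau_2,z_2)$, and iterate $K$ times. A union bound over the $K$ failure events yields $\P^{1,\kappa,\lambda}(\text{all $K$ iterations succeed})\geq 1-K\eta=1-\eps'$.

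It remains to check, on the success event, that the chain $(\tau_k,z_k)$ enters the target set $[5T',6T']\times\{L',\dots,3L'\}\times\{-L',\dots,L'\}^{d-1}$ at some step $k\leq K$ and that all intermediate one-step truncation boxes fit inside $\{-5L',\dots,5L'\}\times\{-3L',\dots,3L'\}^{d-1}$. The deterministic positive drift of at least $L_0+n$ per step in the first coordinate ensures that $z_{k,1}$ sweeps through $\{L',\dots,3L'\}$ at some step, and a calibration of $K$ against the factor $5$--$6$ in the time window makes $\tau_k$ fall inside $[5T',6T']$ at the same step.

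The main obstacle is the geometric control of the $d-1$ other coordinates of $z_k$: the per-step increment there lies in $\{-L_0,\dots,L_0\}$ but is not a priori centered, so the worst-case accumulation after $K$ steps can reach $\pm KL_0=\pm L'$, pushing $z_k$ out of $\{-L',\dots,L'\}^{d-1}$ before the first coordinate enters its target. To resolve this I would exploit the $2d$-fold reflection invariance in distribution of the i.i.d.\ environment, which yields Proposition~\ref{prop:prop1} in each of the $2d$ axial drift directions, and at each step use the variant whose drift direction steers the coordinate that has drifted furthest back toward zero. Enlarging $K$ by a constant factor then keeps the chain confined to $\{-L',\dots,L'\}^{d-1}$ in the non-first coordinates throughout, while still letting the first coordinate reach its target at the correct step. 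The Markov–union-bound estimate from the previous paragraph transfers unchanged, giving the uniform lower bound $\inf_{(s,y)}\P^{1,\kappa,\lambda}(A^{s,y}(L',T',n,S))\geq 1-\eps'$.
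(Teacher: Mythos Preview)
Your high-level strategy---iterate Proposition~\ref{prop:prop1} along a chain of stopping times and use the lattice symmetries of the environment to steer the position into the target window---is exactly what the paper does. The gap is in the steering mechanism.

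The statement of Proposition~\ref{prop:prop1} only tells you that the new block lands in $\{L+n,\dots,2L+n\}\times\{-L,\dots,L\}^{d-1}$; by the $2d$ reflections/axis permutations you therefore control the \emph{direction} of the forced drift, but not the sign of the displacement in the $d-1$ transverse coordinates, which can each move by as much as $L_0$ adversarially at every step. Your remedy is to devote some steps to the variant with drift $\pm e_j$ in order to pull coordinate $j$ back. But during such a step the first coordinate is no longer guaranteed to advance (it too can move by $\pm L_0$), so your sentence ``deterministic positive drift of at least $L_0+n$ per step in the first coordinate'' is false once steering steps are inserted. A back-of-the-envelope count shows the difficulty: in dimension $d\geq 3$, one $+e_1$ step can push each of the $d-1$ transverse coordinates outward by $L_0$, requiring $d-1$ corrective steps, during which the adversary can erase the $e_1$-gain entirely. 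It is not clear that any calibration of $K$ and $L'$ rescues this, and you would still need to synchronise the step at which $z_1$ enters $\{L',\dots,3L'\}$ with the step at which the time enters $[5T',6T']$, which requires the ability to hold position while waiting.

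The paper resolves this by reaching inside the proof of Proposition~\ref{prop:prop1}: the estimates \eqref{eq:propNn}--\eqref{eq:propMn} hold for \emph{every} choice of orthant $\theta\in\{\pm1\}^{d-1}$ and side $u\in\{\pm e_1\}$ (see the Remark following them). This means that at each iteration one may simultaneously prescribe $u_{k+1}=\pm e_1$ to control the first coordinate \emph{and} $\theta_{k+1}=-(\operatorname{sign} y^{(k)}_2,\dots,\operatorname{sign} y^{(k)}_d)$ to push all transverse coordinates toward~$0$. With this simultaneous control the chain is kept inside $\{-L',\dots,L'\}^{d-1}$ at every step, the first coordinate is brought into range within four steps, and then alternating $u=\pm e_1$ holds it there while the time catches up; the whole thing terminates in at most ten iterations, giving $L'=2L+n$, $T'=2T$ and success probability $(1-\eps)^{10}\geq 1-\eps'$. (The paper also runs a separate argument in ``case~2'' of its proof, but that is only because its proof of Proposition~\ref{prop:prop1} is case-split; a black-box derivation of the kind you attempt, once the orthant freedom is used, would not need the split.) The fix for your argument is therefore to invoke this stronger orthant-by-orthant version rather than the bare statement of Proposition~\ref{prop:prop1}.
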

Note that $A^{s,y}$ is a local event, i.e. it depends only on the process in some finite space-time box. Therefore its probability depends continuously on the parameters, and we get the following 
\begin{corollary}\label{cor:local}
Assume \eqref{eq:survival}. For every $\eps>0$ there exists $L,T>0$ as well as $n,S\in\N$ and $\delta>0$ such that
\begin{equation*}
\inf\Big\{\P^{1+\delta,\kappa,\lambda}\left(A^{s,y}(L,T,n,S)\right)\colon s\in[0,T],y\in\{-L,...,L\}^d\Big\}> 1-\eps.
\end{equation*}
\end{corollary}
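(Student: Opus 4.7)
My plan is to deduce Corollary \ref{cor:local} from Proposition \ref{prop:prop2} by a short continuity-in-$\alpha$ argument, where $\alpha$ denotes the disaster rate of the environment. First I would apply Proposition \ref{prop:prop2} with $\eps' = \eps/2$ to fix parameters $L,T$ and $n,S$ such that
\[\inf\Big\{\P^{1,\kappa,\lambda}(A^{s,y}(L,T,n,S))\colon s\in[0,T],\,y\in\{-L,\dots,L\}^d\Big\}>1-\eps/2.\]
The task is then to find $\delta>0$ so that the same infimum under $\P^{1+\delta,\kappa,\lambda}$ is still greater than $1-\eps$, for exactly these parameters $L,T,n,S$.

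The next step is to reduce to a finite family of events. Inspecting \eqref{eq:deflocal}, the event $A^{s,y}$ is truly local: it depends only on (a) the trajectories and branchings of the $|D_n|S^2$ initial particles during the time interval $[s,s+6T]$, all confined to the finite spatial box $B:=\{-5L,\dots,5L\}\times\{-3L,\dots,3L\}^{d-1}$, and (b) the disaster environment restricted to $B\times[s,s+6T]$. Since the branching dynamics are time-homogeneous and the environment is stationary in time, the law of $A^{s,y}$ under $\P^{\alpha,\kappa,\lambda}$ does not depend on $s$, so the infimum over $s\in[0,T]$ collapses and it suffices to prove, for each of the finitely many $y\in\{-L,\dots,L\}^d$, a lower bound of the form $\P^{1+\delta,\kappa,\lambda}(A^{0,y})>1-\eps$.

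For the continuity in $\alpha$ I would use the standard Poisson superposition coupling. Realize the rate-$(1+\delta)$ environment as the independent superposition of a rate-$1$ environment (used to evaluate $\P^{1,\kappa,\lambda}$) and an extra rate-$\delta$ environment. Let $E_\delta$ be the event that the extra environment places no disaster inside the finite space-time box $B\times[0,6T]$; then $P(E_\delta)=\exp(-\delta|B|\cdot 6T)$. On $E_\delta$ the restrictions of the two environments to $B\times[0,6T]$ agree, and since $A^{0,y}$ depends only on that restriction (and on the branching random walk, whose law does not depend on $\alpha$), the event $A^{0,y}$ under the rate-$(1+\delta)$ environment is implied by its occurrence under the rate-$1$ environment together with $E_\delta$. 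This gives the clean one-line monotone estimate
\[\P^{1+\delta,\kappa,\lambda}(A^{0,y})\geq \P^{1,\kappa,\lambda}(A^{0,y})\cdot\exp(-\delta|B|\cdot 6T).\]
Choosing $\delta>0$ small enough so that $(1-\eps/2)\exp(-\delta|B|\cdot 6T)>1-\eps$, and then taking the infimum over the finite collection of admissible $y$, finishes the proof.

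I do not anticipate a genuine obstacle here: the content of the corollary is essentially that local-event probabilities depend continuously (in fact, monotonically after the above thinning coupling) on the Poisson intensity. All the delicate work — propagating $S^2$-occupied boxes of side $n$ through a truncated branching random walk — is already packaged inside Proposition \ref{prop:prop2}. The only small point to be careful about is that the truncation box $B$ used to define $A^{s,y}$ is the same for all $y\in\{-L,\dots,L\}^d$, so there really are only finitely many events to control, and the constant $|B|\cdot 6T$ in the exponent is uniform in $y$ and $s$.
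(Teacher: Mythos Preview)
Your approach is essentially the paper's: apply Proposition \ref{prop:prop2} with $\eps/2$, then use the Poisson superposition coupling $\omega_{1+\delta}=\omega_1+\omega_\delta$ and the locality of $A^{s,y}$ to obtain $\P^{1+\delta,\kappa,\lambda}(A^{s,y})\geq \P^{1,\kappa,\lambda}(A^{s,y})\cdot Q(E_\delta)$, where $E_\delta$ is the event that $\omega_\delta$ has no disasters in the relevant space-time box. The paper does exactly this.

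One small slip: your claim that the law of $A^{s,y}$ does not depend on $s$ is not correct. In \eqref{eq:deflocal} the target time window $[5T,6T]$ is absolute, not $[s+5T,s+6T]$, so varying $s\in[0,T]$ changes the duration $t-s$ available to the process; the law genuinely depends on $s$. This does not damage your argument, however, since the collapse over $s$ is unnecessary: the product bound $\P^{1+\delta,\kappa,\lambda}(A^{s,y})\geq \P^{1,\kappa,\lambda}(A^{s,y})\exp(-\delta|B|\cdot 6T)$ holds for every pair $(s,y)$ with the same factor (take $E_\delta$ to be the event of no extra disasters in $B\times[0,6T]$, which covers $[s,6T]$ for all $s\in[0,T]$), and you can take the infimum over $(s,y)$ directly. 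With that adjustment your proof matches the paper's.
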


We will now argue that Corollary \ref{cor:local} ensures that the process survives with positive probability by a comparison to oriented percolation on $\N^2$. We follow the arguments from chapter I.2 in \cite{liggett}. 

We call a path $(k_0,l_0),...,(k_m,l_m)$ in $\N^2$ an \textbf{oriented path} if for all $i=0,...,m-1$ we have $k_{i+1}=k_i+1$ and either $l_{i+1}=l_i$ or $l_{i+1}=l_i+1$. Fix $L, T, n$ and $S$. Let us call the point $(k,l)\in\N^2$ \textbf{occupied} if there exist $(t,x)$ with 
\begin{align*}
(x+D_n,S^2)\leq Z^{(D_n,S^2)}(t)
\end{align*}
and such that $(t,x)$ is in the space-time box
\begin{align}\label{eq:position}
\big[5Tk,5T(k+1)\big)\times \big\{L(-2k+4l-1),...,L(-2k+4l+1)\big\}\times\big\{-L,...,L\big\}^{d-1}.
\end{align}

Finally we call a point $(k,l)$ \textbf{open} if there is an oriented path $(k_0,l_0),...,(k_m,l_m)$ with $(k_0,l_0)=(0,0)$ and $(k_m,l_m)=(k,l)$, and such that $(k_i,l_i)$ is occupied for all $i=1,...,m$. We write 
\begin{align*}
\eta(k,l)=1
\end{align*}
if $(k,l)$ is open, and $\eta(k,l)=0$ otherwise. 

This defines a random process $(\eta(k,l))_{(k,l)\in\N^2}\in\{0,1\}^{\N^2}$ from every realization of the process $(Z^{(D_n,S^2)}(t))_{t\geq 0}$, and an easy observation is that if $\eta(k,l)=1$ for infinitely many points $(k,l)$, then the original process must have survived. The next proposition compares $(\eta(k,l))_{(k,l)\in\N^2}$ to an independent oriented site percolation 
\begin{align*}
(\widetilde\eta(k,l))_{(k,l)\in\N^2}.
\end{align*}
That is, for $(\widetilde\eta(k,l))_{(k,l)\in\N^2}$ every point $(k,l)\neq (0,0)$ is occupied independently with probability $p\in(0,1)$, and we set 
\begin{align*}
\widetilde \eta(k,l):=1
\end{align*}
if $(k,l)$ is reachable from $(0,0)$ along an oriented path of occupied points, and $\widetilde \eta(k,l)=0$ otherwise. For two realizations $(\eta(k,l))_{(k,l)}$ and $(\widetilde\eta(k,l))_{(k,l)}$, we will say that $\eta$ \textbf{dominates} $\widetilde\eta$ if $\eta(k,l)\geq\widetilde\eta(k,l)$ holds for every $(k,l)$.

\begin{proposition}\label{prop:percolation}
Assume \eqref{eq:survival}. For every $p\in(0,1)$, we can find values $L,T>0, n\in\N$, $ S\in\N$ and $\delta>0$ such that $(\eta(k,l))_{(k,l)\in\N^2}$ induced by 
\begin{align*}
\big(Z^{(D_n,S^2)}(t)\big)_{t\geq 0}
\end{align*}
with parameters $(1+\delta,\kappa,\lambda)$ can be coupled with $(\widetilde\eta(k,l))_{(k,l)\in\N^2}$ in such a way that $\eta$ dominates $\widetilde\eta$ with probability one.
\end{proposition}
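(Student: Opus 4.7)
The plan is the standard block-renormalization comparison with oriented site percolation, using Corollary \ref{cor:local} as the high-probability local propagation step and the Liggett--Schonmann--Stacey stochastic-comparison theorem to pass from a finite-range dependent field to i.i.d.\ Bernoulli percolation.

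Fix $p \in (0,1)$, and let $\eps > 0$ be chosen small at the end. For each $(k,l) \in \N^2$, I associate a space-time box $B_{k,l}$ large enough to contain both the position-time box of $(k,l)$ (as specified in \eqref{eq:position}) and those of its two successors $(k+1,l)$ and $(k+1,l+1)$. I define the local event $G_{k,l}$ to assert that \emph{for every potential occupation witness $(t,x)$ in the position-time box of $(k,l)$}, both the propagation event $A^{t,x}(L,T,n,S)$ of \eqref{eq:deflocal} (producing a witness in the box of $(k+1,l)$) and its spatial reflection (producing a witness in the box of $(k+1,l+1)$) occur, with all relevant particles confined to $B_{k,l}$. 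Corollary \ref{cor:local}, applied in the two relevant directions (the second by left--right symmetry of the walk and of the environment), provides parameters $L, T, n, S, \delta$ making each individual event of the form $A^{t,x}$ of probability at least $1-\eps$, uniformly in $(t,x)$. Although $G_{k,l}$ is formally an uncountable intersection, monotonicity of the dynamics in the initial configuration, together with the fact that only finitely many Poisson clocks ring in $B_{k,l}$ almost surely, reduces it to a finite one; a union bound then yields $P(G_{k,l}) \geq 1 - C\eps$ for some constant $C$ depending only on the expected size of $B_{k,l}$.

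Since $G_{k,l}$ is measurable with respect to the graphical construction inside $B_{k,l}$, the field $(\1_{G_{k,l}})_{(k,l)\in\N^2}$ is $M$-dependent for a fixed $M = M(L,T)$ determined by the geometry of the boxes. By the Liggett--Schonmann--Stacey stochastic domination theorem (see Chapter I.2 of \cite{liggett}), for the given $p$ there exists $\eps_0 = \eps_0(p, M) > 0$ such that any $M$-dependent field with marginals at least $1-\eps_0$ dominates an i.i.d.\ Bernoulli$(p)$ field. Choosing $\eps$ so that $C\eps < \eps_0$ produces the desired coupling: let $(Y_{k,l})$ be the resulting i.i.d.\ Bernoulli$(p)$ field dominated by $(\1_{G_{k,l}})$, and define $\widetilde\eta$ as the oriented percolation cluster of $(0,0)$ in $(Y_{k,l})$. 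An induction on $k$ along any open oriented path from $(0,0)$ shows that the initial configuration $(D_n, S^2)$ at $(0,0)$ certifies occupation of $(0,0)$, and each $G_{k,l}$ event along the path propagates occupation to the successors, yielding $\eta(k,l) \geq \widetilde\eta(k,l)$ for all $(k,l)$.

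The main obstacle is the formulation of $G_{k,l}$ as a genuinely local event: one must ensure that propagation is guaranteed from \emph{every} possible occupation witness, since the witness is produced by the process itself rather than prescribed in advance. Monotonicity of the branching walk in the initial configuration, combined with the piecewise-deterministic structure of the dynamics between Poisson events in $B_{k,l}$, is what makes this reduction to a finite intersection tractable.
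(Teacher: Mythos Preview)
Your overall plan---block renormalization plus a Liggett--Schonmann--Stacey comparison---matches the paper's. The gap is in how you manufacture the local event $G_{k,l}$.

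You want $G_{k,l}$ to assert that propagation succeeds from \emph{every} potential occupation witness $(t,x)$ in the space--time box of $(k,l)$. That is an intersection of events $A^{t,x}$ over a continuum of starting times $t$. Corollary \ref{cor:local} gives only a uniform \emph{marginal} bound $\P(A^{t,x})\geq 1-\eps$; it says nothing about their intersection. Your reduction to a finite intersection does not work: the event $A^{t,x}$ concerns a \emph{fresh} branching random walk $Z^{\{t\}\times(x+D_n,S^2)}$, which carries its own independent jump and branching clocks, separate from the environment's disaster clocks in $B_{k,l}$. Hence the finiteness of the environment's Poisson events in $B_{k,l}$ does not bound the number of genuinely distinct events $A^{t,x}$, and monotonicity in the \emph{initial configuration} is irrelevant since the issue is the continuous starting \emph{time}. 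There is no reason to expect $\P(G_{k,l})\geq 1-C\eps$ with a deterministic $C$, and in fact for independent fresh processes the intersection probability behaves like $(1-\eps)^N$ with $N$ random and unbounded.

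The paper sidesteps this entirely by \emph{not} defining a static field of local events. It builds $\eta$ recursively, level by level: given that a neighbour at level $K$ is open, one fixes the \emph{specific} witness $(t,x)$ it produced and asks whether the sub-process of descendants of those particles (truncated to the box in \eqref{eq:deflocal}) yields a witness at level $K+1$. Conditionally on everything up to level $K$, this sub-process has the law of a single fresh $Z^{\{t\}\times(x+D_n,S^2)}$, so Corollary \ref{cor:local} applies directly and gives success probability at least $1-\eps$. The spatial truncation then makes the events at level $K+1$ $2$-dependent conditional on level $K$, and Theorem B26 in \cite{liggett} (which is tailored to exactly this recursive, layer-by-layer setting) furnishes the coupling with i.i.d.\ Bernoulli$(p)$. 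The essential difference is that the paper conditions on the \emph{actual} witness handed up by the process, rather than intersecting over all conceivable ones.
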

This now gives the contradiction, since we can find $p\in(0,1)$ such that 
\begin{align*}
\P\big(\widetilde\eta(k,l)=1\text{ for infinitely many }(k,l)\big)>0.
\end{align*}
Proposition \ref{prop:percolation} then implies that 
\begin{align*}
\P^{1+\delta,\kappa,\lambda}\big(Z^{(D_n,S^2)}\text{ survives}\big)>0\quad\text{ for some }\delta>0.
\end{align*}
Clearly this also implies $\P^{1+\delta,\kappa,\lambda}(Z^{\{0\}}\text{ survives})>0$ and therefore contradicts \eqref{eq:null}.
\end{proof}

\begin{proof}[Proof of \eqref{strictforalpha}]
Let $\omega_\alpha$ and $\omega_\beta$ be independent environments of disaster rates $\alpha$ and $\beta$, respectively, and note that $\omega_{\alpha+\beta}\isd \omega_\alpha+\omega_{\beta}$. Here, $\omega_\alpha+\omega_{\beta}$ denotes the environment which contains both the disasters 
of $\omega_\alpha$ and of $\omega_\beta$.
We write $\tau_\alpha$ and $\tau_\beta$ for the extinction times in $\omega_\alpha$ and $\omega_\beta$ and get, using \eqref{expectaswell}, that
\begin{align*}
 p(\alpha+\beta,\kappa) &= \lim_{t\to\infty}\frac 1t\E \big[\log P_{\omega_\alpha + \omega_\beta}(\tau_\alpha\wedge\tau_\beta \geq t ) \big]
\\
&= p(\alpha,\kappa)+  \lim_{t\to\infty}\frac 1t\E[\log P_{\omega_\alpha+\omega_\beta}(\tau_\beta \geq t |\tau_\alpha\geq t) ]\\
&\leq p(\alpha,\kappa)+\lim_{t\to\infty}\frac 1t\log \E\big[P_{\omega_\alpha+\omega_\beta}(\tau_\beta\geq t|\tau_\alpha\geq t)\big]=p(\alpha,\kappa)-\beta,
\end{align*}
where the last inequality follows from Jensen's inequality. 
\end{proof}

\begin{proof}[Proof of Corollary \ref{cor:local}]
From Proposition \ref{prop:prop2} we get $L,T$, $n$ and $S$ such that 
\[\inf_{s\in[0,T],y\in\{-L,...,L\}^d}\P^{1,\kappa,\lambda}\left(A^{s,y}(L,T,n, S)\right)> 1-\frac \eps2.\]
Using the notation from the previous claim, we note that 
\begin{align*}
P^{\kappa,\lambda}_{\omega_1+\omega_\delta}(A^{s,y}(L,T,n, S))= P^{\kappa,\lambda}_{\omega_1}(A^{s,y}(L,T,n, S))
\end{align*}
on the event $E\coloneqq \big\{\omega_\delta\text{ has no disasters in }[0,6T]\times \{-5L,...,5L\}\times\{-3L,...,3L\}^{d-1}\big\}$. But the $Q$-probability of $E$ goes to $1$ for $\delta\to 0$, and therefore the claim follows from 
\begin{align*}
\inf_{s\in[0,T],y\in\{-L,...,L\}^d}\P^{1+\delta,\kappa,\lambda}\left(A^{s,y}(L,T,n, S)\right)\geq \Big(1-\frac \eps2\Big)Q(E)\, .
\end{align*}
\end{proof}

\begin{proof}[Proof of Proposition \ref{prop:percolation}]
We construct $(\eta(k,l))_{(k,l)\in\N^2}$ recursively. Assume that we have $\{\eta(k,l):k\leq K,l\leq K\}$ for some $K\in\N$. Since $\eta(K+1,l)=0$ unless either $\eta(K,l)=1$ or $\eta(K,l-1)=1$, we assume that at least one of the latter random variables has the value $1$. So we have 
\begin{align*}
(x+D_n,S^2)\leq Z^{(D_n,S^2)}(t)
\end{align*}
for some $(t,x)$ in either the space-time box from \eqref{eq:position} (with $K= k$) or satisfying
\begin{align*}
(t,x)\in  \big[5TK,5T(K+1)\big)\times \big\{L(-2K+4l-5),...,L(-2K+4l-3)\big\}\times\big\{-L,...,L\big\}^{d-1}.
\end{align*}
Clearly $\eta(K+1,l)=1$ holds if we find $(t',x')$ such that
\begin{align*}
(D_n+x',S^2)\leq Z^{\{t\}\times(x+D_n,S^2)}(t')
\end{align*}
where $t'\in [5T(K+1),5T(K+2))$ and
\begin{align*}
x'\in \big\{L(-2(K+1)+4l-1),...,L(-2(K+1)+4l+1)\big\}\times\big\{-L,...,L\big\}^{d-1}.
\end{align*}
Corollary \ref{cor:local} shows that for any $\eps>0$, we can choose $L,T>0,n\in\N$, $s \in \N$ and $\delta>0$ such that this happens with probability at least $1-\eps$, and it is clear that this probability does not depend on $K$ or $l$. 

So we have constructed a percolation $(\eta(k,l))_{(k,l)\in\N^2}$ where each point is open with high probability, but not independently. To address this we define a distance between two sets $S_1,S_2\subseteq\N$ by 
\begin{align*}
d(S_1,S_2):=\inf\{|x_1-x_2|:x_1\in S_1,x_2\in S_2\}
\end{align*}
We notice that the restriction to a truncated process in Corollary \ref{cor:local} ensures that the percolation is $2$-dependent. This means that conditioned on $\{\eta(k,l)\colon k\leq K\}$, the collections $(\eta(K+1,l))_{l\in S_1}$ and $(\eta(K+1,l))_{l\in S_2}$ are independent for any sets $S_1,S_2\subseteq \N$ with $d(S_1,S_2)> 2$. 

Theorem B26 in \cite{liggett} then ensures that we can couple $(\eta(K+1,l))_{l\leq K+1}$ with an independent family of Bernoulli random variables $(\widetilde\eta(K+1,l))_{l\leq K+1}$ such that $\eta$ dominates $\widetilde\eta$, and such that $\widetilde \eta(K+1,l)=1$ holds with probability at least $(1-\sqrt[5]{\eps})^2$ if either $\eta(K,l-1)=1$ or $\eta(K,l)=1$.
\end{proof}

It is clear that the key step is to prove Proposition \ref{prop:prop1} and Proposition \ref{prop:prop2} about the local events $A^{s,y}(L,T,n, S)$. For this we consider the numbers $N$ resp. $M$ of particles leaving a space-time box through the top resp. the faces, rigorously defined in Section \ref{sec:fkg}. Well established techniques for branching processes show that if the process survives we can expect $N+M$ to be large, which we show in some technical lemmas in Section \ref{sec:lemmas}. 

The construction we outlined before requires us to have more control on where exactly those particles exit the box, so we let $N(u,\theta)$ count the number of particles exiting through the orthant described by $\theta$ of the face in direction $u$, and we use $M(u,\theta)$ for the number of particles exiting through the corresponding orthant of the top. The formal definition is again deferred until Section \ref{sec:fkg}. Using an FKG inequality we obtain in Section \ref{sec:fkg} that we can expect all the $N(e_i,\theta)$ and $M(u,\theta)$ to be large on the event of survival, at least if we increase the number $S^2$ of particles initially on each site of $D_n$.

Having done all of this we can finally prove Propositions \ref{prop:prop1} and \ref{prop:prop2} in Section \ref{sec:props}, finishing the proof of Theorem \ref{thm:main}.


\subsection{Some technical lemmas}\label{sec:lemmas}
Recall that we have fixed $\lambda$, $\kappa$ and $q$ such that (\ref{eq:survival}) holds. We first show that we can make the survival probability arbitrarily close to $1$ by enlarging the set of initially occupied sites. This is part (i) of Lemma \ref{lem:lemma2} below.

Part (ii) concerns particles that survive locally until time $1$ by using only two sites. We obtain that (with high probability) many particles will achieve this if we start with a large enough number of particles at the origin.

Part (iii) shows that with high probability, starting from $N$ particles occupying the origin, at time $1$ we end up with a configuration where every site of $D_n$ is occupied by many particles. We need this to be a local event, so we restrict ourself to particles that do not leave certain boxes.



\begin{lemma}\label{lem:lemma2}
\begin{enumerate}
\item[(i)] For every $\eps>0$ there is $n\in\N$ with
\[\P\big(Z^{D_n}\text{ survives}\big)> 1-\eps.\]
\item[(ii)] Recall \eqref{particlesat0}. For every $\eps>0$ and $M\in\N$, there is an $N\in\N$ such that
\[\P\Big(\big|Z^{(\{0\},N)}_{\{0,e_1\}}(1)\cap\{0\}\big|\geq M\Big)>1-\eps.\]
\item[(iii)] Recall \eqref{substeil}. For every $\eps>0$ and $n,S\in\N$, there is an $N\in\N$ such that
\[\min\left\{\P\big((ne_1+D_n,S)\leq Z^{(\{0\},N)}_{ne_1+D_n}(1)\big),\P\big((D_n,S)\leq Z^{(\{0\},N)}_{D_n}(1)\big)\right\}>1-\eps\, .\]
\end{enumerate}

\end{lemma}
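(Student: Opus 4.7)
The plan leverages the fact that, under the quenched law $P_\omega$, the subtrees rooted at different initial particles evolve independently; under the annealed law $\P$, dependence enters only through the shared environment, and I will handle it with the spatial ergodic theorem (for part (i)) and dominated convergence combined with a quenched law of large numbers (for parts (ii) and (iii)).

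For part (i), quenched independence gives
\[ P_\omega(Z^{D_n}\text{ dies out}) = \prod_{x\in D_n}(1-p_\omega(x)),\qquad p_\omega(x)\coloneqq P_\omega(Z^{\{x\}}\text{ survives}).\]
The field $(p_\omega(x))_{x\in\Z^d}$ is stationary and ergodic under spatial translations (since $(\omega^{(x)})_x$ is i.i.d.\ under $Q$). Assumption \eqref{eq:survival} reads $E_Q[p_\omega(0)]>0$, hence $Q(p_\omega(0)>0)>0$ and therefore $E_Q[\log(1-p_\omega(0))]\in[-\infty,0)$. The spatial ergodic theorem yields
\[\frac{1}{|D_n|}\sum_{x\in D_n}\log(1-p_\omega(x))\longrightarrow E_Q[\log(1-p_\omega(0))]<0\quad Q\text{-a.s.,}\]
so $\prod_{x\in D_n}(1-p_\omega(x))\to 0$ $Q$-a.s.; since the products lie in $[0,1]$, dominated convergence gives $\P(Z^{D_n}\text{ dies out})\to 0$.

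For parts (ii) and (iii), I will write the quantity of interest as $\sum_{i=1}^N Y_{x,i}$ where, under $P_\omega$, the $Y_{x,i}$ are i.i.d.\ copies of $Y_x\coloneqq |Z^{\{0\}}_B(1)\cap\{x\}|$, with $B$ the relevant truncation box. If the quenched mean $\mu_\omega(x)\coloneqq E_\omega[Y_x]$ is strictly positive, the quenched strong law forces $\sum_i Y_{x,i}\to\infty$ as $N\to\infty$, so $P_\omega\big(\sum_i Y_{x,i}\geq M\big)\to 1$ for any fixed threshold $M$. Intersecting over the finitely many target sites (just $x=0$ for (ii); all of $D_n$ or $ne_1+D_n$ for (iii)) and applying dominated convergence transfers the conclusion to the annealed probability, provided $\mu_\omega(x)>0$ $Q$-a.s.\ for every relevant $x$.

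The main obstacle is verifying these positivity statements, which is a sample-path construction inside the disastered box. The disasters in the finite space-time region $B\times[0,1]$ form a $Q$-a.s.\ finite set, so for part (iii) I can, for each $x\in B$, thread a piecewise constant trajectory from $0$ to $x$ that stays in $B$ and avoids every disaster; combined with the no-branching event (quenched probability $e^{-\lambda}$) this yields $\mu_\omega(x)>0$. Part (ii) is subtlest because the state space is reduced to $\{0,e_1\}$ with $X(0)=X(1)=0$. Writing $S_0,S_1\subset[0,1]$ for the disaster times at the two sites (finite and $Q$-a.s.\ disjoint), I will construct a right-continuous step function $X:[0,1]\to\{0,e_1\}$ forcing $X(t)=e_1$ at every $t\in S_0$ and $X(t)=0$ at every $t\in S_1$, inserting one final jump back to $0$ after the last disaster if needed. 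The quenched probability of a continuous-time random walk approximately following this finite step path is strictly positive, giving $\mu_\omega(0)>0$ and completing the argument.
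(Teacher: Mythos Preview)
Your proof is correct and rests on the same core ideas as the paper's: quenched independence of the subtrees rooted at distinct initial particles, the spatial ergodic theorem for part (i), and the $Q$-a.s.\ strict positivity of the single-particle success probability (obtained by threading a finite path through the a.s.\ finitely many disasters) for parts (ii)--(iii).

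The technical implementations differ slightly. For (i) you take logarithms and apply the spatial ergodic theorem directly to $\log(1-p_\omega(x))$, whereas the paper applies Chebyshev's inequality to $S_n=\sum_{x\in D_n}\1\{Z^{\{x\}}\text{ survives}\}$ and uses the ergodic theorem on the quenched mean and variance; your route is a touch cleaner since it avoids the second-moment computation. For (ii)--(iii) you invoke the quenched SLLN on the full count $Y_{x,i}=|Z^{\{0\}}_B(1)\cap\{x\}|$ and pass to the annealed statement by dominated convergence, while the paper fixes a deterministic lower bound $\alpha$ on a high-$Q$-probability set $A(\alpha)=\{P_\omega(B(\emptyset))\ge\alpha\}$ and compares to a $\mathrm{Bin}(N,\alpha)$ random variable. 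The paper's version gives an explicit uniform-in-$\omega$ rate on $A(\alpha)$, which can be convenient downstream; your SLLN argument is more streamlined but does require the (easy) observation that $E_\omega[Y_x]\le e^{\lambda(m-1)}<\infty$ so that the law of large numbers applies.
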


\begin{proof}

\textbf{Part (i)}: Define a collection $(Y_x)_{x \in \Z^d}$ with $Y_x\coloneqq  \1\{|Z^{\{x\}}(t)|>0\;\forall t>0\}$. We have
\[\P\big(|Z^{D_n}(t)|>0 \;\forall t\big) =\P \Big(\sum\limits_{x \in D_n}Y_x > 0\Big) 
= \E\Big[P_\omega\Big(\sum\limits_{x \in D_n}Y_x > 0\Big)\Big]\]
Writing $S_n\coloneqq \sum_{x\in D_n}Y_x$ we have
\begin{equation}\label{markovineq}
P_\omega\big(S_n = 0\big) \leq P_\omega\left(|S_n- E_\omega[S_n]| \geq E_\omega[S_n] \right)
\leq \frac{\text{Var}_\omega(S_n)} {\big(E_\omega[S_n]\big)^2}
\end{equation}
Now, due to the spatial ergodic theorem (see Theorem 4.9 in \cite{liggettIPS}), we have $\frac{1}{|D_n|} E_\omega[S_n]\to\E\big[E_\omega[Y_0]\big] > 0$ for almost all $\omega$, while 
\[\frac{1}{|D_n|} \text{Var}_\omega\big(S_n\big)=\frac{1}{|D_n|}\sum\limits_{x \in D_n}\text{Var}_\omega(Y_x)\to \E\big[\text{Var}_\omega(Y_0)\big]\quad Q\text{-a.s.},\]
where we used the fact that $\{Y_x, x \in \Z^d\}$ are independent with respect to $P_\omega$. We conclude from \eqref{markovineq} that $P_\omega(S_n= 0) \to 0 $ almost surely and therefore $\P \left(S_n = 0\right) \to 0$ as well.

\textbf{Part (ii)}: For $v\in\N^*$ a node in our tree (recall Section \ref{sec:constr}), let $B(v)$ denote the event that $v$ 
\begin{itemize}
 \item does not branch before time $1$
 \item satisfies $X([0,1],v)\subseteq\{0,e_1\}$ and $X(1,v)=0$
 \item and is not killed by the environment until time $1$.
\end{itemize}
For any $\alpha\in(0,1]$ we let $A(\alpha)$ be the event 
\begin{align*}
A(\alpha)\coloneqq \{P_\omega(B(\emptyset))\geq \alpha\}.
\end{align*}
Note that the events $A(\alpha)$ are increasing as $\alpha\downarrow 0$ and that their union over all $\alpha\in(0,1]\cap\Q$ has probability $1$. So for any $\eta>0$ we can find $\alpha>0$ small enough that \begin{align*}
Q(A(\alpha))\geq 1-\eta
\end{align*}

Now starting with $N$ initial particles at the origin in an environment $\omega\in A(\alpha)$, the number of particles $v$ such that $B(v)$ occurs dominates the number of successes of a binomial random variable with $N$ trials and success probability $\alpha$. Clearly we can choose $N$ large enough such that 
\[P\big(\operatorname{Bin}(N,\alpha)\geq M\big)\geq 1-\eta.\]
Then we can conclude since
\[\P \Big(\Big|Z^{(\{0\},N)}_{\{0,e_1\}}(1)\cap\{0\}\Big|\geq M\Big)\geq Q(A(\alpha))P(\xi_N\geq M)\geq (1-\eta)^2\geq 1-\eps\]
holds for $\eta$ small enough.

\par \textbf{Part (iii)}: Let $\widetilde D_n$ be equal to either $D_n$ or $ne_1+D_n$. We fix an enumeration $\widetilde D_n=\{x_1,...,x_{(2n+1)^d}\}$ of the sites, and introduce the quantity
\[S(x)\coloneqq P_\omega\big(\tau\geq 1,X(1)=x,X([0,1])\subseteq \widetilde D_n\Big).\]
Here we use $P_\omega$ for the law of a single particle which does not branch and which is killed by the environment $\omega$ with $\tau$ denoting its extinction time. For $\alpha\in(0,1]$ we consider events 
\[A(\alpha)\coloneqq \Big\{\min\{ S(x):x\in \widetilde D_n\}\geq \alpha\Big\}.\]
Fix $\eta>0$. By the same argument as before we find that $Q(A(\alpha))\geq 1-\eta$ holds for some $\alpha>0$ small enough. We now choose $N\coloneqq m(2n+1)^d$ for some large $m$. Letting $W\subseteq \N^*$ denote the set of initial particles, we partition $W$ (deterministically) in such a way that
\[W=W_1\cupdot \dots \cupdot W_{(2n+1)^d}\quad\text{ with }|W_i|=m\;\forall i=1,...,(2n+1)^d.\]
Now for $w\in W_i$ let $B_i(w)$ be the indicator function of the event that the particle $w$ 
\begin{itemize}
\item does not branch before time $1$
\item satisfies $X([0,1],w)\subseteq \widetilde D_n$ and $X(1,w)=x_i$ and
\item is not killed by the environment until time $1$.
\end{itemize}
Let $B$ be the event
\begin{align*}
B\coloneqq\Big\{\sum_{w\in W_i}B_i(w)\geq S\text{ for all }i=1,...,|\widetilde D_n|\Big\}.
\end{align*}
Noticing that $P(B_i(w)=1)=e^{-\lambda }S(x_i)$ we conclude that for $\omega\in A(\alpha)$ we have
\[P_\omega(B)= \prod_{i=1}^{|\widetilde D_n|}P_\omega\Big(\sum{w\in W_i}B_i(w)\geq R\Big)\geq P\big(\operatorname{Bin}(m, e^{-\lambda }\alpha)\geq R\big)^{(2n+1)^d}\]
But now it is clear that we can choose $m$ large enough that $P_\omega(B)\geq 1-\eta$ on $A$, hence
\[\P\Big((\widetilde D_n,S)\leq Z_{\widetilde D_n}^{(\{0\},N)}(1)\Big)\geq \int_{A(\alpha)}P_\omega(B)\dd Q\geq (1-\eta)^2\geq 1-\eps\]
holds for $\eta$ small enough.

\end{proof}


In the following, we think of $A\subseteq \Z^d$ as a large set, so that $\{Z^A\text{ dies out}\}$ is an event of small probability.
In the next lemma we state the familiar property that survival can only happen if the number of particles goes to infinity. Looking at the process as a random tree embedded in space-time, this means  that there are many particles occupying the top of a space-time box. 
\begin{lemma}\label{lem:lemma3}
For every $A\subseteq\Z^d$ we have
\[\P\big(Z^{A}\text{ survives}\big)=\P\big(Z^{A}\text{ survives}, \lim_{t\to\infty}|Z^A(t)|=\infty \big).\]
\end{lemma}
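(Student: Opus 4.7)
The plan is to reduce the statement to showing that for each fixed $K\in\N$,
\[\P\bigl(Z^A\text{ survives},\ |Z^A(t)|\leq K\text{ for arbitrarily large }t\bigr)=0,\]
since taking a union over $K\in\N$ then rules out the complement of $\{\lim_{t\to\infty}|Z^A(t)|=\infty\}$ on the survival event (the process takes values in $\N$, so not tending to infinity means being bounded by some $K$ infinitely often). The approach is the standard ``uniform extinction probability per unit of time when the population is small'' trick, with disasters playing the role of killing, iterated via the annealed strong Markov property.

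First I would establish the one-step bound: for every configuration $\eta$ with $|\eta|\leq K$,
\[\P\bigl(Z^\eta(1)=\emptyset\bigr)\ \geq\ c(K)\coloneqq e^{-K(\kappa+\lambda)}(1-e^{-1})^K\ >\ 0.\]
This follows by intersecting the three jointly independent events that in $[0,1]$: (i) none of the at most $K$ walking clocks rings (probability $\geq e^{-K\kappa}$), (ii) none of the at most $K$ branching clocks rings (probability $\geq e^{-K\lambda}$), and (iii) each of the at most $K$ currently occupied sites experiences at least one disaster (probability $\geq (1-e^{-1})^K$, since disasters are independent rate-$1$ Poisson processes across sites). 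On the intersection, every initial particle remains where it was until a disaster annihilates it, so $Z^\eta(1)=\emptyset$. Independence of the walking clocks, branching clocks and the Poisson disaster processes makes the product bound rigorous.

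Next I would iterate this at the stopping times $T_0\coloneqq 0$ and $T_{n+1}\coloneqq \inf\{t\geq T_n+1\colon |Z^A(t)|\leq K\}$, which are all finite on the event of interest. The annealed strong Markov property at $T_n$ — valid because the walking and branching clocks and the disaster Poisson processes on $[T_n,\infty)$ are independent of $\mathcal F_{T_n}$ — combined with the displayed estimate applied to $\eta=Z^A(T_n)$ gives
\[\P\bigl(Z^A(T_n+1)=\emptyset\bigm|\mathcal F_{T_n}\bigr)\ \geq\ c(K)\quad\text{on }\{T_n<\infty\}.\]
Since $\{T_{n+1}<\infty\}\subseteq\{Z^A(T_n+1)\neq\emptyset\}$, we obtain $\P(T_{n+1}<\infty)\leq(1-c(K))\,\P(T_n<\infty)$, and iterating yields $\P(T_n<\infty)\leq(1-c(K))^n\to 0$, so $\P\bigl(\bigcap_n\{T_n<\infty\}\bigr)=0$ as required.

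The only real subtlety — and the one place I would be careful — is the annealed strong Markov property at these random times: one must check that conditionally on $\mathcal F_{T_n}$, the environment restricted to $[T_n,\infty)$ is still distributed as a fresh collection of independent rate-$1$ Poisson processes, and similarly for the walking and branching clocks of every currently alive particle. Both facts are standard consequences of the strong Markov property of Poisson processes and of the memoryless property of the exponential clocks driving $V(t)$ and $X(\cdot,v)$. Apart from this bookkeeping the argument is essentially four lines of soft analysis; no feature of the environment beyond positive disaster rate and spatial independence is used.
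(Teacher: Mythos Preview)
Your argument is morally correct and uses the same key inequality as the paper, but one step fails as written. The inclusion $\{T_{n+1}<\infty\}\subseteq\{Z^A(T_n+1)\neq\emptyset\}$ is false: if $Z^A(T_n+1)=\emptyset$ then $|Z^A(t)|=0\leq K$ for all $t\geq T_n+1$, so in fact $T_{n+1}=T_n+1<\infty$. The fix is trivial: redefine $T_{n+1}\coloneqq\inf\{t\geq T_n+1\colon 1\leq |Z^A(t)|\leq K\}$ (and similarly $T_0$). On the event of interest the process survives, so $|Z^A(t)|\geq 1$ always, and all $T_n$ are still finite there; with this definition extinction by time $T_n+1$ forces $T_{n+1}=\infty$, and your recursion $\P(T_{n+1}<\infty)\leq (1-c(K))\,\P(T_n<\infty)$ goes through.

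Regarding the comparison: the paper packages the same one-step extinction bound differently. Instead of iterating at stopping times, it observes that $\P(Z^A\text{ dies out}\mid\mathcal F_t)\geq(\alpha\beta)^{|Z^A(t)|}$ with $\alpha=1-e^{-1}$ and $\beta=e^{-\lambda-\kappa}$, and then lets $t\to\infty$: the left side is a bounded martingale converging to $\1\{Z^A\text{ dies out}\}$, while on $\{|Z^A(t)|<K\text{ i.o.}\}$ the right side has $\liminf\geq(\alpha\beta)^K>0$, which is incompatible with survival. This avoids stopping times and the annealed strong Markov bookkeeping you flag as the delicate point; your route makes that Markov structure explicit and gives a quantitative geometric decay, at the cost of the small repair above. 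Both arguments rest on exactly the same probabilistic content.
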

\begin{proof}
Define constants 
\begin{equation}\label{alphadef}
\alpha\coloneqq Q \Big(\text{At least one disaster occurs at the origin before time }1\Big)=1-e^{-1}
\end{equation}
\begin{equation}\label{betadef}
\beta\coloneqq  P\Big(\big(Z^{\{0\}}(t)\big)_{0\leq t\leq 1}\text{ stays at the origin and does not branch}\Big)=e^{-\lambda-\kappa}.
\end{equation}
Let $\mathcal F_t$ be the sigma algebra generated by the environment, the branching times and the particle positions up to time $t$. Then for any $t$ we have
\[\P(Z^{A}\text{ dies out}|\mathcal F_t)\geq \alpha^{|Z^{A}(t)|}\beta^{|Z^{A}(t)|}.\]
Letting $t$ go to infinity, the left side converges to the indicator function $\1{\{Z^A\text{  dies out}\}}\in\{0,1\}$. However, if for some $K$ we have $|Z^A(t)|<K$ for arbitrarily large $t$, the limit inferior of the right hand side will be bounded away from $0$. Therefore the event
\[\big\{Z^A\text{ survives}, |Z^A(t)|<K\text{ for arbitrarily large }t\big\}\]
has probability $0$. Now
\begin{align*}
& \P\big(Z^{A}\text{ survives}, \limsup_{t\to\infty}|Z^A(t)|<\infty \big)\\
&= \lim_{K\to \infty} \P\big(Z^{A}\text{ survives}, |Z^A(t)|<K\text{ for arbitrarily large }t\big)=0.
\end{align*}
\end{proof}

We also need the following general result:
\begin{lemma}\label{lem:neu}
 Let $m,S\geq 1$ and consider random variables $(X_0,...,X_m)$ taking values in $\{0,1\}^{m+1}$ (not necessarily independent). Then
\begin{align*}
\prod_{i=0}^m P\big(X_i =0\big)^S\leq P(X_i=0\text{ for all }i)+\Big(\frac{m}{m+1}\Big)^{(m+1)S}.
\end{align*}
\end{lemma}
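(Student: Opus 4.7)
The plan is to reduce the stochastic statement to a purely deterministic one-variable inequality. Set $p_i \coloneqq P(X_i = 0)$ and let $\bar p \coloneqq \frac{1}{m+1}\sum_{i=0}^m p_i$ denote their arithmetic mean. Two elementary ingredients drive everything. First, AM--GM gives $\prod_{i=0}^m p_i \leq \bar p^{\,m+1}$, so
\[\prod_{i=0}^m p_i^S = \Big(\prod_{i=0}^m p_i\Big)^S \leq \bar p^{\,(m+1)S}.\]
Second, the union bound applied to $\{X_i = 1\}$ yields
\[P(X_i = 0 \text{ for all } i) \geq 1 - \sum_{i=0}^m (1-p_i) = (m+1)\bar p - m.\]

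I would then split on whether $\bar p$ is above or below the threshold $\frac{m}{m+1}$. If $\bar p \leq \frac{m}{m+1}$, the AM--GM bound alone suffices, since $\bar p^{(m+1)S} \leq \bigl(\frac{m}{m+1}\bigr)^{(m+1)S}$, which is already dominated by the second term on the right-hand side of the claim. If $\bar p > \frac{m}{m+1}$, the union-bound estimate on $P(X_i = 0 \text{ for all } i)$ is nontrivial, and combining it with the AM--GM bound reduces the lemma to the purely deterministic inequality
\[\bar p^{(m+1)S} - (m+1)\bar p + m \leq \Big(\tfrac{m}{m+1}\Big)^{(m+1)S} \qquad \text{for } \bar p \in \bigl[\tfrac{m}{m+1}, 1\bigr].\]

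To finish, the function $g(x) \coloneqq x^{(m+1)S} - (m+1)x + m$ has
\[g''(x) = (m+1)S\bigl((m+1)S - 1\bigr) x^{(m+1)S - 2} \geq 0 \quad \text{on } (0,1],\]
since $m, S \geq 1$ forces $(m+1)S \geq 2$. Hence $g$ is convex on $\bigl[\frac{m}{m+1}, 1\bigr]$ and attains its maximum at an endpoint; one computes $g\bigl(\tfrac{m}{m+1}\bigr) = \bigl(\tfrac{m}{m+1}\bigr)^{(m+1)S}$ and $g(1) = 0$, so the maximum is exactly $\bigl(\tfrac{m}{m+1}\bigr)^{(m+1)S}$, as required.

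The only mildly subtle point is choosing where to split. The value $\bar p = \frac{m}{m+1}$ is simultaneously (i) the threshold where the union bound on $P(X_i = 0 \text{ for all } i)$ becomes vacuous and (ii) the point where the AM--GM upper bound equals the correction term $\bigl(\frac{m}{m+1}\bigr)^{(m+1)S}$; recognising that the right-hand side of the lemma is engineered to glue these two regimes together makes the case split obvious, after which the argument is just AM--GM plus a one-step convexity check.
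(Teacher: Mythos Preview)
Your argument is correct. Both your proof and the paper's end with the same one-variable convexity check --- that $x\mapsto x^{(m+1)S}$ is convex while the right-hand side is affine, so it suffices to verify the inequality at the two endpoints --- but the reductions to that check differ. The paper fixes $p_{\llbracket m\rrbracket}:=P(X_i=0\text{ for all }i)$ and then maximizes $\prod_i P(X_i=0)^S$ over all joint laws with that value, arguing (somewhat tersely) that the extremal law puts the remaining mass evenly on the events $\{X_j=1,\,X_i=0\ \forall i\neq j\}$; this yields $P(X_i=0)=\frac{m+p_{\llbracket m\rrbracket}}{m+1}$ and the inequality $\bigl(\tfrac{m+p_{\llbracket m\rrbracket}}{m+1}\bigr)^{(m+1)S}\leq p_{\llbracket m\rrbracket}+\bigl(\tfrac{m}{m+1}\bigr)^{(m+1)S}$. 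Your route replaces that optimization by two off-the-shelf inequalities (AM--GM on the marginals and the union bound on the joint), at the cost of an extra but trivial case $\bar p\leq \tfrac{m}{m+1}$. Under the substitution $\bar p=\tfrac{m+p_{\llbracket m\rrbracket}}{m+1}$ the two final inequalities are literally the same, so the endpoint check coincides; your reduction is just a more transparent way to arrive there.
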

\begin{proof}
For $I\subseteq\m=\{0,...,m\}$ let us define 
\begin{align*}
p_I:=P(\{i:X_i=0\}=I).
\end{align*}
We need to show that
\begin{align}\label{eq:needtoshow}
\prod_{i=0}^m\Big(p_{\m}+\sum_{\{i\}\subseteq I\subsetneq\m}p_I\Big)^S\leq p_{\m}+\Big(\frac{m}{m+1}\Big)^{(m+1)S}.
\end{align}
Observe that the left hand side takes a maximum over all values of $\big(p_I:I\neq \m\big)$ at $p_{I}=0$ for $|I|<m$ and 
\begin{align*}
p_{\m\setminus\{i\}}=\frac{1-p_{\m}}{m+1}\quad\text{ for }i=0,...,m.
\end{align*}
Then \eqref{eq:needtoshow} reads
\begin{align*}
\Big(\frac{m+p_{\m}}{m+1}\Big)^{(m+1)S}\leq p_{\m}+\Big(\frac{m}{m+1}\Big)^{(m+1)S}.
\end{align*}
Since the function on the left hand side is convex in $p_{\m}$ while the right hand side is linear, the conclusion follows by checking that the inequality indeed holds for $p_{\m}$ equal to $0$ and to $1$.
\end{proof}

\subsection{Space-time boxes and an FKG-inequality}\label{sec:fkg}
Let us define the random variables mentioned at the end of Section \ref{sec:outline}. Note that we can think of the process $(Z^\eta(t))_{0\leq t\leq T}$ as a process in space-time, which we want to emphasize by writing 
\[[0,T\,]\times Z^\eta\coloneqq \big\{(t,v)\colon 0\leq t\leq T, v\in Z^\eta(t)\big\}\subseteq [0,T]\times\N^*.\]
For convenience we also define the sign of zero to be $1$, that is 
\begin{align}\label{eq:defsign}
\sign(x)\coloneqq \1_{x\geq 0}-\1_{x<0}\quad\text{ for }x\in\Z.
\end{align}
For $L\in\N$ and $T>0$ we now consider a \textbf{space-time box} $\B\subseteq \R\times\Z^d$ of the form
\[\B\coloneqq [0,T]\times \{-L,...,L\}^d.\]
We denote the \textbf{top} of this box by 
\begin{align*}
\T(L,T)\coloneqq \{T\}\times \{-L,...,L\}^d.
\end{align*}
We can divide $\T$ in the \textbf{left} and \textbf{right} parts $\T(L,T,1)$ and $\T(L,T,-1)$, given by
\begin{align*}
\T(L,T,u)\coloneqq&\big\{(T,x)\in T(L,T)\colon \sign x_1=u \big\}\quad\text{ for }u\in\{\pm 1 \}.
\end{align*}
Moreover let $\U\coloneqq \{\pm e_i:i=1,...,d\}$ and for $u\in \U$ let $\F(L,T,u)$ denote the \textbf{face} in direction $u$, given by
\begin{align*}
\F(L,T,u)&\coloneqq[0,T] \times\left( \{-L,...,L\}^{i-1}\times\{0\}\times\{-L,...,L\}^{d-i}+Lu\right) .
\end{align*}
We need to partition both the top and the sides even further: Let $\Theta:=\{\pm 1\}^{d-1}$ and note that for every $\theta\in\Theta$ and $u\in \{\pm 1\}$ we find an \textbf{orthant} given by
\[\T(L,T,u,\theta)\coloneqq \{(T,x_1,...,x_d)\in\T(L,T,u)\colon \sign x_j=\theta_{j-1}\;\forall j=2,...,d\}.\]
Similarly an orthant on the face $\F(L,T,\pm e_i)$ has the form
\begin{align*}
\O(L,T,\pm e_i,\theta)\coloneqq \Big\{(t,x_1,...,x_d)\in \F(L,T,u)\colon \sign x_j=\theta_j\; \forall j<i, \sign x_j=\theta_{j-1}\; \forall j>i\Big\}.
\end{align*}
We further denote the \textbf{boundary} of $\B$ by $\partial \B$, that is
\begin{align*}
\partial \B(L,T)\coloneqq \T(L,T) \cup \bigcup_{u\in\U} \F(L,T,u).
\end{align*}
Note that the bottom $\{0\} \times \{-L, \cdots , L\}$ of the box is not part of the boundary. For all these quantities we sometimes omit the dependence on $L$ and $T$ if it is clear from the context. See also Figure \ref{fig:bild} for an example in $d=2$.
\begin{figure}[h]
  \begin{minipage}[c]{0.62\textwidth}
  
\begin{tikzpicture}[decoration={markings,mark=at position 1 with {\arrow[scale=2,>=to]{>}};}]
\pgfmathsetmacro{\cubex}{2}
\pgfmathsetmacro{\cubey}{4}
\pgfmathsetmacro{\cubez}{2}
\pgfmathsetmacro{\horizonx}{\cubex}
\pgfmathsetmacro{\horizonz}{3}

\foreach \x in {-\horizonx,...,\horizonx}
{
  \draw[gray!60] (\x, 0, \horizonz+1) -- ++(0, 0, -2*\horizonz-2);
}

\foreach \z in {-\horizonz,...,\horizonz}
{
  \draw[gray!60] (-\horizonx-1, 0, \z) -- ++(2*\horizonx+2, 0, 0);
}

\draw[line width = .4mm] (\cubex,0,\cubez) -- ++(-2*\cubex,0,0) -- ++(0,\cubey,0) -- ++(2*\cubex,0,0) -- cycle;
\draw[line width = .4mm,dashed] (\cubex,0,-\cubez) -- ++(-2*\cubex,0,0) -- ++(0,\cubey,0);
\draw[line width = .4mm] (\cubex,0,\cubez) -- ++(0,0,-2*\cubez) -- ++(0,\cubey,0) -- ++(0,0,2*\cubez) -- cycle;
\draw[line width = .4mm,dashed] (-\cubex,0,\cubez) -- ++(0,0,-2*\cubez) -- ++(0,\cubey,0);
\draw[line width = .4mm] (\cubex,\cubey,\cubez) -- ++(-2*\cubex,0,0) -- ++(0,0,-2*\cubez) -- ++(2*\cubex,0,0) -- cycle;

\draw[line width = .2mm, postaction={decorate}] (-\horizonx-1,0,0) -- ++(2*\horizonx + 2,0,0)  node[label={[shift={(.3,-.6)}]$\mathbb Z^2$}] {};
\draw[line width = .2mm, postaction={decorate}] (0,0,-\horizonz-1) -- ++(0,0,2*\horizonz + 2);
\draw[line width = .2mm, postaction={decorate}] (0,0,0) -- ++(0,\cubey + 1.5,0) node[label={[shift={(.5,-.5)}]$\R^+$}] {};

\draw[fill=gray, opacity=.3] (-\cubex, 0, -\cubez) -- ++(0,0,2*\cubez) -- ++(0,\cubey,0) -- ++(0,0,-2*\cubez) -- cycle;
\draw[fill=gray, opacity=.3] (-\cubex,\cubey,-\cubez) -- ++(0,0,2*\cubez) -- ++(\cubex,0,0) -- ++(0,0,-2*\cubez) -- cycle;

\draw (-\cubex / 2,\cubey,0) -- ++ (-\cubex *.7, .1*\cubey ,0) node[label={[shift={(-.3,-.3)}]$\mathbb T(L,T,-1)$}] {};
\draw (-\cubex, \cubey / 2,0) -- ++ (-.6*\cubex,.1*\cubey ,0) node[label={[shift={(-.65,-.25)}]$\mathbb F(L,T,-e_1)$}] {};

\draw[fill=gray, opacity=.3] (\cubex, 0, \cubez) -- ++(0,0,-\cubez) -- ++(0,\cubey,0) -- ++(0,0,\cubez) -- cycle;
\draw[fill=gray, opacity=.3] (\cubex, \cubey, \cubez) -- ++(0,0,-\cubez) -- ++(-\cubex,0,0) -- ++(0,0,\cubez) --cycle;

\draw (\cubex / 2,\cubey,\cubez/2) -- ++ (-.35*\cubex , -1.2*\cubey,\cubez ) node[label={[shift={(.1,-.65)}]$\mathbb T(L,T,1,1)$}] {};
\draw (\cubex,\cubey / 2,\cubez/2) -- ++ (+\cubex * .15, -.7*\cubey,\cubez ) node[label={[shift={(.1,-.65)}]$\mathbb F(L,T,e_1,1)$}] {};

\draw[gray] (\cubex,0,0) -- ++ (0,\cubey,0) -- ++ (-2*\cubex,0,0) -- ++(0,-\cubey,0);
\draw[gray] (0,0,\cubez) -- ++ (0,\cubey,0) -- ++ (0,0,-2*\cubez) -- ++(0,-\cubey,0);
\end{tikzpicture}

  \end{minipage}\hfill
  \begin{minipage}[c]{0.38\textwidth}
    \caption{The space-time box $\B$ in the case $d=2$. The relevant part of the boundary is subdivided in the following way: The left and right parts $\T(-1)$ and $\T(1)$ of the top, and the faces $\F(e_1)$ and $\F(-e_1)$ in direction $e_1$. Each of these is again subdivided in $2$ orthants, denoted $\T(\pm 1,\pm 1)$ and $\F(\pm e_1,\pm 1)$. \label{fig:bild}}
  \end{minipage}
\end{figure}
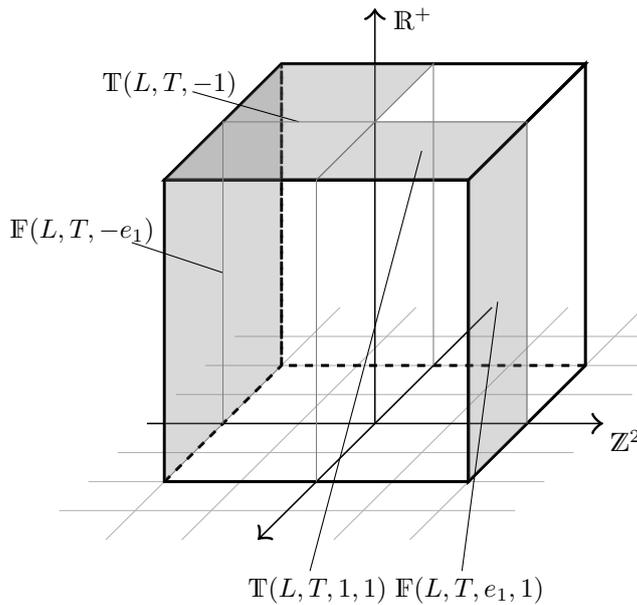
Let $\eta$ be a configuration as defined in Section \ref{sec:notation}. For $u\in\U$ and $\theta\in\Theta$ let 
\begin{align*}
N^\eta(L,T,u,\theta)
\end{align*}
count the number of particles leaving $\B$ through $\O(L,T,u,\theta)$. That is, $N^\eta(L,T,u,\theta)$ is the number of times such that a particle of $Z^\eta$ hits $\partial\B$ for the first time at some $(t,x)\in \O(L,T,u,\theta)$, formally defined as the cardinality of the set
\[\big\{(t,v)\in [0,T]\times Z^\eta\colon X(t, v)\in \O(L, T,u,\theta), X(s,v)\notin \partial \B\;\forall s<t\big\}.\]

Furthermore for $u\in\{\pm 1\}$ and $\theta\in\Theta$ let $M^\eta(L,T,u,\theta)$ count the particles exiting $\B$ through $\T(L,T,u,\theta)$, so that
\[M^\eta(L,T,u,\theta)\coloneqq \big|\big\{v\in Z^\eta(T)\colon X(T, v)\in \T(L,T,u,\theta),X(s, v)\notin \partial \B\;\forall s<T\big\}\big|.\]
We use $M^\eta$ and $N^\eta$ to refer to the vectors 
\begin{align*}
M^\eta(L,T,\cdot,\cdot)\in\N^{(2^d)}\quad\text{ and }\quad N^\eta(L,T,\cdot,\cdot)\in\N^{(d2^{d})}.
\end{align*}
Moreover we record the following shorthand notation for later use:
\begin{align}\label{eq:deftotalnumber}
\sum M^\eta(L,T)&\coloneqq\sum_{u\in\{\pm 1\},\theta\in\{-1,1\}^{d-1}} M^\eta(L,T,u,\theta)\\
\sum N^\eta(L,T)&\coloneqq \sum_{u\in\U,\theta\in\{-1,1\}^{d-1}} N^\eta(L,T,u,\theta).
\end{align}
We have the following FKG inequality.
\begin{theorem}\label{thm:fkg}
Let $\eta_1$ and $\eta_2$ be two configurations, and denote by $V^{\eta_1}$ and $\widetilde V^{\eta_2}$ two independent realizations of the process started from $\eta_1$ resp. $\eta_2$. We let $Z^{\eta_1}$, $M^{\eta_1}$ and $N^{\eta_1}$ (resp. $\widetilde Z^{\eta_2}$, $\widetilde M^{\eta_2}$ and $\widetilde N^{\eta_2}$) be defined as above for the processes started from $\eta_1$ (resp. $\eta_2$). Moreover let 
\begin{align*}
f,g\colon \N^{(2^d)}\times\N^{(d2^{d})}\to \R^+
\end{align*}
be increasing. Then
\begin{equation}\label{eq:fkg}
\E\Big[f\big(M^{\eta_1}, N^{\eta_1}\big)g\big(\widetilde M^{\eta_2}, \widetilde N^{\eta_2}\big)\Big]\geq \E\Big[f\big( M^{\eta_1}, N^{\eta_1})\Big]\E\Big[g( \widetilde M^{\eta_2}, \widetilde N^{\eta_2})\Big]\end{equation}
\end{theorem}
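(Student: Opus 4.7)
My plan is to exploit the fact that, once we condition on the environment $\omega$, the two branching processes $V^{\eta_1}$ and $\widetilde V^{\eta_2}$ are independent, so that the conditional expectation factorizes:
\[
\E\bigl[f(M^{\eta_1},N^{\eta_1})\,g(\widetilde M^{\eta_2},\widetilde N^{\eta_2}) \,\big|\, \omega\bigr]
= F(\omega)\,G(\omega),
\]
where $F(\omega):=E_\omega[f(M^{\eta_1},N^{\eta_1})]$ and $G(\omega):=E_\omega[g(\widetilde M^{\eta_2},\widetilde N^{\eta_2})]$. Taking expectations with respect to $Q$, the desired inequality reduces to showing $\E[F(\omega)G(\omega)]\geq \E[F(\omega)]\E[G(\omega)]$, i.e. positive correlation of $F$ and $G$ under the law of the environment.

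The next step is a monotonicity observation: if $\omega\leq\omega'$ in the natural partial order on environments (every disaster of $\omega$ is also a disaster of $\omega'$), then for any fixed realization of the underlying branching walk the set $Z^\eta$ in environment $\omega'$ is contained in $Z^\eta$ in environment $\omega$, because any particle killed by a disaster of $\omega$ is also killed in $\omega'$. In particular every coordinate of $M^\eta$ and $N^\eta$ is nonincreasing in $\omega$, so that $f(M^{\eta_1},N^{\eta_1})$ and $g(\widetilde M^{\eta_2},\widetilde N^{\eta_2})$ are decreasing functionals of $\omega$ for each fixed realization of the (independent) walks. Integrating out the walks, both $F$ and $G$ are decreasing functionals of $\omega$.

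Finally, I would invoke the FKG inequality for Poisson point processes: if two functionals on the configuration space of a Poisson process are both monotone in the same direction (here both decreasing), they are positively correlated. Since $\omega=(\omega^{(x)})_{x\in\Z^d}$ consists of independent Poisson processes, this applies coordinatewise. The safest route, if one wants an elementary argument rather than citing the point-process FKG, is to discretize time on a grid of mesh $2^{-k}$ on a large spatial box, replace each $\omega^{(x)}([i2^{-k},(i+1)2^{-k}))$ by a Bernoulli indicator, apply the classical Harris--FKG inequality on the resulting product of $\{0,1\}$'s (for which monotone functionals are positively correlated), and then pass to the limit $k\to\infty$ followed by exhausting $\Z^d$, using dominated convergence since $f,g\geq 0$ and the functionals are bounded in the bounded cases that matter (one can reduce to bounded $f,g$ by truncation).

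The main technical obstacle is the FKG step for the Poisson environment; the monotonicity in $\omega$ and the conditional independence are both essentially immediate from the construction. I would therefore spend most of the write-up on making the discretization argument precise (or alternatively on citing a clean reference for FKG for Poisson processes, such as Janson's monograph), noting only at the end that the monotonicity verified above together with conditional independence then yields \eqref{eq:fkg}.
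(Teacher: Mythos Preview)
Your proposal is correct and shares the paper's core idea: reduce to an FKG inequality for the environment by exploiting that $f(M^{\eta_1},N^{\eta_1})$ and $g(\widetilde M^{\eta_2},\widetilde N^{\eta_2})$ are monotone in $\omega$. The difference lies in the order of conditioning. You condition on $\omega$ first, use the conditional independence of the two walks to factorize, and are then left with proving positive correlation of $F(\omega)$ and $G(\omega)$ under the Poisson law of $\omega$; this forces you to either invoke FKG for Poisson point processes or run a discretization-and-limit argument. The paper instead conditions on the two branching walks $V^{\eta_1},\widetilde V^{\eta_2}$ first. For fixed walks there are only finitely many jump and branching times $0=U_0<U_1<\dots<U_{K+1}=T$ in $[0,T]$, and the functionals depend on $\omega$ only through the finite family of independent Bernoulli variables $\chi(k,x)=\1\{\text{no disaster at }x\text{ in }[U_k,U_{k+1})\}$. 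Thus the classical Harris--FKG inequality for product measures applies directly, with no approximation or limit needed; taking expectation over the (independent) walks then gives \eqref{eq:fkg}. In short, your route is valid but spends effort on a Poisson-FKG step that the paper sidesteps by choosing a discretization adapted to the walks, making the argument both shorter and entirely elementary.
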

An intuitive explanation is that if many particles of $V^{\eta_1}$ survive and occupy any given orthant then this increases the chance that many particles of $\widetilde V^{\eta_2}$ are alive in any other orthant, since they are affected by the same disasters.


\begin{proof}[Proof of Theorem \ref{thm:fkg}]
We will show that for almost all realizations of $V^{\eta_1}$ and $\widetilde V^{\eta_2}$ we have
\begin{equation}\label{eq:darst2}
\begin{aligned}
&\int f\big( M^{\eta_1}(\omega), N^{\eta_1}(\omega)\big)g\big( \widetilde M^{\eta_2}(\omega), \widetilde N^{\eta_2}(\omega)\big)Q(\dd \omega)\\
\geq &\int f\big( M^{\eta_1}(\omega), N^{\eta_1}(\omega)\big)Q(\dd \omega) \int g\big( \widetilde M^{\eta_2}(\omega), \widetilde N^{\eta_2}(\omega)\big) Q(\dd \omega)
\end{aligned}
\end{equation}
Taking expectation with respect to the law of $V^{\eta_1}$ and $\widetilde V^{\eta_2}$ then yields the claim.  Think of the processes as trees, recalling Section \ref{sec:constr}.
Now conditioned on $V^{\eta_1}$ and $\widetilde V^{\eta_2}$ we can find $K\in\N$ and
\[0=U_0<U_1<...<U_{K}<U_{K+1}=T\]
such that both trees are constant on $[U_k,U_{k+1})$ for all $k=0,...,K$. That is, neither $V^{\eta_1}$ nor $\widetilde V^{\eta_2}$ jumps or branches in $[0,T]\setminus\{U_1,...,U_K\}$. Consider
\[\chi(k,x)\coloneqq \1\big\{\text{no disaster occurs at }x\text{ in the interval }[U_k,U_{k+1})\big\}.\]
Let $\mathcal G:=\sigma(\chi(k,x)\colon 0\leq k\leq K, x\in \Lambda)$ and note that $M^{\eta_1}, N^{\eta_1},\widetilde M^{\eta_2}$ and $\widetilde N^{\eta_2}$ are $\mathcal G$-measurable and increasing in $\chi$. Since $f$ and $g$ are increasing this means that both 
\begin{align*}
f( M^{\eta_1}, N^{\eta_1})\quad\text{ and }\quad g(\widetilde M^{\eta_2},\widetilde N^{\eta_2})
\end{align*}
are also increasing in $\chi$. Therefore (\ref{eq:darst2}) follows from the FKG inequality, see Corollary 2.12 in \cite{liggettIPS}. In this case the law of $\{\chi(k,x)\colon 0\leq k\leq K,x\in\Lambda\}$ trivially satisfies the FKG lattice condition since it is a product measure.
\end{proof}

We obtain the following
\begin{corollary}\label{cor:repair}
For any $L,K,K'\in\N, T>0$, any configuration $\eta$ and any $S\in\N$ we have
\begin{align}
\prod_{\theta\in\Theta,u\in\U}\P\big(M^{S\eta}(L,T,u,\theta)\leq K\big)&\leq \P\Big(\sum M^{\eta}(L,T)\leq d2^dK\Big)+(d2^d)^{-d2^dS}\label{eq:tasukatta1}\\
\prod_{\theta\in\Theta,u\in\{\pm 1\}}\P\big(N^{S\eta}(L,T,u,\theta)\leq K\big)&\leq \P\Big(\sum N^{\eta}(L,T)\leq 2^dK\Big)+(2^d)^{-2^dS}\label{eq:tasukatta2}
\end{align}
 and 
\begin{equation}
\begin{aligned}\label{eq:tasukatta3}
&\P\Big(\sum N^{S\eta}(L,T)\leq K\Big)\P\Big(\sum M^{S\eta}(L,T)\leq K'\Big)\\\leq& \P\Big(\sum M^{\eta}(L,T)+\sum N^{\eta}(L,T)\leq K+K'\Big)+4^{-S}
\end{aligned}
\end{equation}
\end{corollary}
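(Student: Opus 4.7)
My plan for Corollary \ref{cor:repair} is to combine three ingredients: the conditional decomposition of $Z^{S\eta}$ into $S$ independent copies of $Z^\eta$ given $\omega$, the FKG inequality of Theorem \ref{thm:fkg}, and the combinatorial Lemma \ref{lem:neu}. The bounds \eqref{eq:tasukatta1} and \eqref{eq:tasukatta2} have the same structure and differ only in the number of orthants used, while \eqref{eq:tasukatta3} is the same scheme with only two factors.

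For \eqref{eq:tasukatta1} and \eqref{eq:tasukatta2} I write $(u_i,\theta_i)$, $i=0,\dots,m$, for the enumeration of the orthant labels in the product and would proceed in four steps. First, the map $\omega\mapsto P_\omega(M^{S\eta}(u_i,\theta_i)\le K)$ is increasing in the set of disasters, since adding disasters can only decrease the orthant count $M^{S\eta}$; by the FKG inequality for the Poisson disaster process (equivalently, Theorem \ref{thm:fkg} iterated over $m+1$ independent copies of $Z^{S\eta}$ in the common environment),
\[\prod_{i=0}^m\P\big(M^{S\eta}(u_i,\theta_i)\le K\big)\le\E\Big[\prod_{i=0}^m P_\omega\big(M^{S\eta}(u_i,\theta_i)\le K\big)\Big].\]
Second, I split $Z^{S\eta}$ conditionally on $\omega$ into $S$ i.i.d.\ copies of $Z^\eta$, so that $M^{S\eta}(u_i,\theta_i)=\sum_{j=1}^S M^{(j)}(u_i,\theta_i)$; because the nonnegative integer summands cannot individually exceed their sum, $\{M^{S\eta}(u_i,\theta_i)\le K\}\subseteq\bigcap_{j=1}^S\{M^{(j)}(u_i,\theta_i)\le K\}$, which gives the quenched bound
\[P_\omega\big(M^{S\eta}(u_i,\theta_i)\le K\big)\le P_\omega\big(M^\eta(u_i,\theta_i)\le K\big)^S.\]
Third, for each $\omega$ I apply Lemma \ref{lem:neu} under $P_\omega$ with $X_i=\1\{M^\eta(u_i,\theta_i)>K\}$, obtaining
\[\prod_{i=0}^m P_\omega\big(M^\eta(u_i,\theta_i)\le K\big)^S\le P_\omega\big(M^\eta(u_i,\theta_i)\le K\ \forall i\big)+\Big(\tfrac{m}{m+1}\Big)^{(m+1)S}.\]
Finally, the trivial inclusion $\{M^\eta(u_i,\theta_i)\le K\ \forall i\}\subseteq\{\sum M^\eta\le(m+1)K\}$ and integration over $\omega$ finish the chain; with $m+1$ equal to $2^d$ or $d2^d$, the constant $(m+1)K$ is the desired $2^dK$ or $d2^dK$.

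The proof of \eqref{eq:tasukatta3} is identical with $m+1=2$: apply Theorem \ref{thm:fkg} to two independent copies of $Z^{S\eta}$ to bound $\P(\sum N^{S\eta}\le K)\P(\sum M^{S\eta}\le K')$ by the expectation of the product of the two quenched probabilities; split each copy into $S$ pieces to deduce $P_\omega(\sum N^{S\eta}\le K)\le P_\omega(\sum N^\eta\le K)^S$ and similarly for $M$; apply Lemma \ref{lem:neu} with two variables $X_0=\1\{\sum N^\eta>K\}$, $X_1=\1\{\sum M^\eta>K'\}$, which produces exactly $(1/2)^{2S}=4^{-S}$; and conclude with the inclusion $\{\sum N^\eta\le K\}\cap\{\sum M^\eta\le K'\}\subseteq\{\sum N^\eta+\sum M^\eta\le K+K'\}$.

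The main step to get right is the monotonicity direction in the FKG inequality: the quenched probability $P_\omega(M^{S\eta}(u_i,\theta_i)\le K)$ is \emph{increasing} in the configuration of disasters, which lets me move the product inside the expectation over $\omega$ and thereby reduce the task to a quenched calculation where Lemma \ref{lem:neu} and simple event inclusions take over. Incidentally, for $m+1\ge 3$ the error term $\big(\tfrac{m}{m+1}\big)^{(m+1)S}$ produced this way is slightly weaker than the $(m+1)^{-(m+1)S}$ stated in the corollary, but both forms are exponentially small in $S$, which is all that is used later.
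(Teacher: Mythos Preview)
Your proof is correct and follows the same route as the paper: decompose $Z^{S\eta}$ under $P_\omega$ into $S$ independent copies to get $P_\omega(M^{S\eta}(u,\theta)\le K)\le P_\omega(M^\eta(u,\theta)\le K)^S$, apply Lemma~\ref{lem:neu} quenched, use the inclusion $\{M^\eta(u_i,\theta_i)\le K\ \forall i\}\subseteq\{\sum M^\eta\le (m+1)K\}$, and invoke Theorem~\ref{thm:fkg} (for decreasing functions, via the trivial extension) to pass from the product of annealed probabilities to the expectation of the product of quenched probabilities; the only difference is that the paper presents the FKG step last rather than first. Your observation about the error term is also correct: Lemma~\ref{lem:neu} yields $\big(\tfrac{m}{m+1}\big)^{(m+1)S}$ rather than $(m+1)^{-(m+1)S}$, and indeed the paper later chooses $S$ so that $\big(1-\tfrac{1}{d2^d}\big)^{d2^dS}$ and $\big(1-\tfrac{1}{2^d}\big)^{2^dS}$ are small, confirming that the weaker bound is what is actually used.
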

\begin{proof}
We will show only the proof of \eqref{eq:tasukatta1} since the other claims follow in the same way. Let $I:=\U\times \Theta$, so that $|I|=d2^d$. Fix an environment $\omega$ and for $(u,\theta)\in I$ define
\begin{align*}
X_{u,\theta}:=\1\{M^{\eta}(L,T,u,\theta)> K\}.
\end{align*}
Now consider independent copies of the tree indexed by $I\times\{1,...,S\}$, each of which is started from configuration $\eta$ and evolves in the same environment. We use $X_{u,\theta,i}$ to denote the realization of $X_{u,\theta}$ corresponding to the tree $(u,\theta,i)\in I\times\{1,...,S\}$, which is now an independent family. Observe that
\begin{align*}
P_{\omega}\big(X_{u,\theta,i}=0\text{ for all }i=1,...,S\big)\geq P_{\omega}\big(M^{S\eta}(L,T,u,\theta)\leq K\big).
\end{align*}
Together with Lemma \ref{lem:neu} this implies
\begin{align*}
\prod_{u,\theta}P_{\omega}\big(M^{S\eta}(L,T,u,\theta)\leq K\big)&\leq P_{\omega}\big(M^\eta(L,T,u,\theta)\leq K\text{ for all }u,\theta\big)+(d2^d)^{-d2^dS}\\
&\leq P_{\omega}\Big(\sum M^\eta(L,T)\leq d2^dK\Big)+(d2^d)^{-d2^dS}
\end{align*}
Finally the claim follows by taking expectations, and applying Theorem \ref{thm:fkg} to the left hand side.
\end{proof}

The next lemma shows that we can make the probability on the right hand side of \eqref{eq:tasukatta3} arbitrarily small: That is, if the process survives then there will be many particles occupying the boundary of any space-time box:
\begin{lemma}\label{lem:lemma3part2}
Let $(T_j)_j$ and $(L_j)_j$ be two sequences increasing to infinity. 
Then for any $K>0$ and any configuration $\eta$ have
\[\limsup_{j\to\infty}\P\Big(\sum N^\eta(L_j,T_j)+\sum M^\eta(L_j,T_j)<K\Big)\leq \P\big(Z^\eta\text{ dies out }\big).\]
\end{lemma}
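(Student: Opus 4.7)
My plan is to show that on the event $\{Z^\eta\text{ survives}\}$ the sum $\sum N^\eta(L_j,T_j)+\sum M^\eta(L_j,T_j)$ tends to infinity in probability. Combined with the fact that on $\{Z^\eta\text{ dies out}\}$ the sum is eventually $0$ (for $L_j$ large enough every trajectory stays inside the box, and for $T_j$ large enough the population is already zero), this yields the desired limsup bound.

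The first step is to exploit Lemma \ref{lem:lemma3}: on $\{Z^\eta\text{ survives}\}$ one has $|Z^\eta(t)|\to\infty$ almost surely, while the positions of $Z^\eta(t_0)$ are a.s.\ bounded for any fixed $t_0$. Given $\eps>0$ and $N\in\N$ (to be chosen later), I pick $t_0$ and $L_0$ so that the event
\[
E=\{|Z^\eta(t_0)|\geq N\}\cap\bigl\{X([0,t_0],v)\subseteq\{-L_0,\ldots,L_0\}^d\ \forall v\in Z^\eta(t_0)\bigr\}
\]
satisfies $\P(E^c)\leq\P(Z^\eta\text{ dies out})+\eps$.

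For $L_j\geq L_0$ and $T_j\geq t_0$ I consider, for each $v\in Z^\eta(t_0)$, the event $A_v$ that $v$'s subtree either has a descendant alive at time $T_j$ whose post-$t_0$ trajectory lies in $\{-L_j,\ldots,L_j\}^d$, or contains a particle whose trajectory first exits $\{-L_j,\ldots,L_j\}^d$ through a face during that particle's own lifetime. In both cases $v$'s subtree contributes at least one node to $\sum N^\eta+\sum M^\eta$, and since distinct $v$'s give genealogically disjoint subtrees, on $E$ I obtain
\[
\sum N^\eta(L_j,T_j)+\sum M^\eta(L_j,T_j)\geq\sum_{v\in Z^\eta(t_0)}\1_{A_v}.
\]
By the Markov property at $t_0$ and translation invariance of the environment, $\P(A_v^c)$ is bounded by the probability that a single-rooted process started from the origin dies out by time $T_j-t_0$ with trajectory in $\{-(L_j-L_0),\ldots,L_j-L_0\}^d$. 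This converges to $\P(Z^{\{0\}}\text{ dies out})=\P(Z^\eta\text{ dies out})$ as $j\to\infty$, using that on the dying-out event the trajectory is automatically bounded.

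Finally, conditionally on $\mathcal{F}_{t_0}$ and on the post-$t_0$ environment, the indicators $\1_{A_v}$ are independent Bernoulli random variables. A Chernoff-type bound, combined with the translation invariance of the environment, then shows that the conditional probability that fewer than $K$ of these indicators equal $1$ tends to $0$ as first $j\to\infty$ and then $N\to\infty$. Together with the estimate on $\P(E^c)$ and letting $\eps\to 0$, this yields
\[
\limsup_{j\to\infty}\P\bigl(\textstyle\sum N^\eta(L_j,T_j)+\sum M^\eta(L_j,T_j)<K\bigr)\leq\P\bigl(Z^\eta\text{ dies out}\bigr).
\]
The main technical obstacle is the final concentration step, because the quenched success probabilities $P_\omega(A_v\mid\mathcal{F}_{t_0})$ are correlated through the common post-$t_0$ environment; to deal with this I plan to use that the $N$ initial particles typically occupy distinct regions of space (taking $L_0$ or $N$ large enough), so that one can invoke an ergodic-type averaging of the success probabilities across well-separated environmental regions.
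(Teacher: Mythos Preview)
Your approach has a real gap in the concentration step, and the proposed fix does not close it.

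The paper does not go through a concentration argument at all. It adapts the martingale argument of Lemma~\ref{lem:lemma3} directly: conditional on the sigma-algebra $\mathcal{F}_{L_j,T_j}$ generated by the environment and the process \emph{inside} the space-time box, each of the $|E_j|=\sum N^\eta+\sum M^\eta$ boundary particles can be made to freeze and get killed by a disaster in the as-yet-unrevealed environment outside the box, giving $\P(Z^\eta\text{ dies out}\mid\mathcal{F}_{L_j,T_j})\geq (\alpha\beta)^{|E_j|}$. As $j\to\infty$ the left side converges to $\1\{Z^\eta\text{ dies out}\}$, forcing $|E_j|\to\infty$ on the survival event.

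Your fix fails on two counts. First, nothing in the construction forces the $N$ particles of $Z^\eta(t_0)$ to be well separated; they may all occupy the same site, so that the quenched parameters $P_\omega(A_v\mid\mathcal{F}_{t_0})$ all coincide and no spatial averaging is available. Second, even for separated starting positions, each event $A_v$ depends on the environment throughout the full box $[t_0,T_j]\times\{-L_j,\ldots,L_j\}^d$, so the relevant environmental regions are never disjoint. More conceptually, after letting $j\to\infty$ the sum $\sum_v\1_{A_v}$ becomes the number of particles at time $t_0$ whose lineage survives; proving this tends to infinity on the survival event would require knowing that $P_\omega(Z^{\{x\}}\text{ survives})>0$ for $Q$-a.e.\ $\omega$, which at this stage of the argument (critical case, arguing by contradiction) is precisely what you cannot take for granted.

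Minor point: the equality $\P(Z^{\{0\}}\text{ dies out})=\P(Z^\eta\text{ dies out})$ is false for general $\eta$; you only need the former to be strictly less than $1$, which does follow from the assumption $\P(Z^\eta\text{ survives})>0$.
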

\begin{proof}
Let $\Lambda_j\coloneqq \{-L_j+1,\cdots ,L_j-1\}^d$, and consider the space-time box $\B_j\coloneqq [0,T_j]\times \Lambda_j$. We denote by $\mathcal{F}_{L_j,T_j}$ the sigma algebra generated by the environment in $\B_j$ as well as the branching times and positions of particles inside $\B_j$. We will consider the process of particles in $Z^\eta$ that have never left $\B_j$:
\begin{align*}E_j\coloneqq &\Big\{(s,v)\in[0,T]\times Z^\eta\colon \|X(s,v)\|_{\infty}=L_j, \|X(r,v)\|_\infty<L_j\;\text{ for all }r<s\Big\}\\
&\cup\;\Big\{(T,v)\in\{T\}\times Z^\eta\colon \|X(r,v)\|<L_j\;\text{ for all }r\leq T\Big\}.\end{align*}
Here $\|\cdot\|_\infty$ denotes the maximum norm. Note that $(s,v)\in E_j$ implies that the particle $v$ has just left $\B_j$ (for the first time) at time $s$, 
either through one of the sides or through the top. Clearly $E_j$ is $\mathcal F_{L_j,T_j}$ measurable and we have $|E_j|=N^\eta(L_j,T_j)+M^\eta(L_j,T_j)$. Now for $(s,v)\in[0,T]\times Z^\eta$ let $D(s,v)$ be the indicator function of the event that $v$ is killed because
\begin{itemize}
\item there is a disaster at $X(s,v)$ in the interval $[s,s+1]$
\item and $v$ has no branching times and no jumps in $[s,s+1]$.
\end{itemize}
Then $\P(D(s,v)=1)=\alpha\beta$ with the same $\alpha$ and $\beta$ as in \eqref{alphadef} and \eqref{betadef}. We can write
\begin{align}
\label{eq:diesout}\P(Z^\eta\text{ dies out}|\mathcal F_{L_j,T_j})\geq \P\big(D(s,v)=1\text{ for all }(s,v)\in E_j\big|\mathcal F_{L_j,T_j}\big)\geq \alpha^{|E_j|}\beta^{|E_j|}.
\end{align}
For the last estimate, note that for $(s,v)\in E_j$ the event $D(s,v)=1$ is independent of $\mathcal F_{L_j,T_j}$ and that for $(s_1,v)\neq(s_2,w)\in E_j$ we have
\[\P(D(s_1,v)=D(s_2,w)=1)\geq \P(D(s_1,v)=1)\P(D(s_2,w)=1).\]
Now the same argument as in the proof of Lemma \ref{lem:lemma3} applies: For $j\to\infty$ the left hand side of (\ref{eq:diesout}) converges to 
$\1\{Z^\eta\text{ dies out}\}$, while the right side will be bounded away from zero whenever $|E_j|\leq K$ for infinitely many $j$. Therefore we have
\[\limsup_{j\to\infty} \P(|E_j|<K)\leq \P(|E_j|\leq K\text{ i.o.})\leq \P(Z^\eta\text{ dies out}).\]
\end{proof}

\subsection{Proof of the key propositions}\label{sec:props}
We are now in a position to prove the missing Propositions from Section \ref{sec:outline}. Note that there we have only used Proposition \ref{prop:prop2}, however we obtain it by repeatedly applying Proposition \ref{prop:prop1}. For the proof of this first result we need to consider two cases depending on the value of $\eps$. Since $\eps$ will in turn depend on the value $\eps'$ in the second proposition, we choose to state those two cases in terms of $\eps'$ from the beginning: Given $\eps'> 0$ we choose $\eps>0$ such that
\begin{equation}\label{eq:defeps}
(1-\eps)^{10}\geq 1-\eps'.
\end{equation}
With this value of $\eps$ we can find $\delta>0$ such that
\begin{equation}\label{eq:defdelta}
\min\Big\{\Big(1-(3\delta)^{(2^{d})^{-1}}\Big)\Big(1-(2\delta)^{(d2^{d})^{-1}}\Big)(1-\delta)^3,1-3\delta\Big\}\geq 1-\eps.
\end{equation}
By Lemma \ref{lem:lemma2} we can find $n\in\N$ such that 
\begin{equation}\label{eq:defn}\P\big(Z^{D_n}\text{ survives}\big)\geq 1-\delta^2.\end{equation}
Moreover let $S$ be an integer such that 
\begin{align*}
\max\Big\{\Big(1-\frac{1}{d2^d}\Big)^{d2^dS},\Big(1-\frac{1}{2^d}\Big)^{2^dS},4^{-S}\Big\}\leq \frac{\delta^2}2.
\end{align*}
Now one of the following two statements will be true, and we prove both propositions separately in each case: 
\begin{align}
\forall L\in\N\text{ we have }&\P\big(Z^{(D_n,S)}_L\text{ survives}\big)<1-2\delta.\label{eq:case1}\tag{case 1}\\
\exists L\in\N\text{ such that }&\P\big(Z^{(D_n,S)}_L\text{ survives}\big)\geq 1-2\delta \label{eq:case2}\tag{case 2}.
\end{align}


\subsubsection{Proof in \ref{eq:case1}}
\begin{proof}[Proof of Proposition \ref{prop:prop1} in \ref{eq:case1}]

We first have to find a number $R\in\N$ that is large enough for our purposes: Let
\begin{equation}\label{eq:alpha}\alpha\coloneqq \min\Big\{\P\Big((ne_1+D_n,S^2)\leq Z^{\{0\}}_{ne_1+D_n}(1)\Big),P\Big((D_n,S^2)\leq Z^{\{0\}}_{D_n}(1)\Big)\Big\}>0.
\end{equation}
Then choose $R_1$ such that 
\begin{align*}
1-(1-\alpha)^{R_1}>1-\delta
\end{align*}
and set $R_2\coloneqq  R_1(4n)^d$. Note that this ensures that any set $A\subseteq\Z^d$ with $|A|\geq R_2$ contains a subset $A'\subseteq A$ with $|A'|\geq R_1$ and such that for every two sites $x\neq y\in A'$ we have $\|x-y\|_\infty\geq 4n$. By part (iii) of Lemma \ref{lem:lemma2} we find $R_3$ such that 
\[\min\Big\{\P\Big((ne_1+D_n,S^2)\leq Z^{(\{0\},R_3)}_{ne_1+D_n}(1)\Big),\P\Big((D_n,S^2)\leq Z^{(\{0\},R_3)}_{D_n}(1)\Big)\Big\}> 1-\delta.\]
Finally due to part (ii) of Lemma \ref{lem:lemma2} we can choose $R_4$ large enough that
\[\P\Big(\big|Z^{(\{0\},R_4)}_{\{0,e_1\}}(1)\cap\{0\}\big|\geq R_3\Big)>1-\delta.\]
Now set
\[R\coloneqq \big(\max\big\{R_1,R_2,n R_4\big\}\big)^2.\]
The next step is to find $L$ and $T$. From Lemma \ref{lem:lemma3} and the definition of $n$ we obtain
\[\lim_{T\to\infty}\lim_{L\to\infty} \P\Big(\sum M^{D_n}(L,T)>2^dR\Big)=\lim_{T\to\infty} \P(|Z^{D_n}(T)|>2^dR)\geq 1-\delta^2.\]
We can rewrite this by saying that for all $T\geq T_0$ there exists $L(T)$ with
\begin{equation}\label{eq:defLT}
\P\Big(\sum M^{(D_n,S)}(L,T)>2^dR\Big)\geq \P\Big(\sum M^{D_n}(L,T)>2^dR\Big)\geq 1-\delta\quad\forall L\geq L(T).
\end{equation}
That is, the probability that there are $2^dR$ particles at the top of a box $[0,T]\times \{-L,...,L\}^d$ can be made large by choosing $L$ and $T$ large enough. We want a similar result for the number of particles leaving through the sides of $[0,T]\times \{-L,...,L\}^d$. Using (\ref{eq:defLT}) and (\ref{eq:case1}) we can define two increasing sequences $(L_k)_{k\geq 0}$ and $(T_k)_{k\geq 0}$, starting with $T_0$ the value used for \eqref{eq:defLT} and $L_0\coloneqq L(T_0)+1$. For $k\geq 1$ we proceed by
\begin{align*}
 L_{k+1}\coloneqq &\max\big\{L_k+1,L(T_{k}+1)\big\}\\
 T_{k+1}\coloneqq &\inf\Big\{T>T_k:\P\Big(\sum M^{(D_n,S)}(L_{k+1},T)>2^dR\Big)<1-2\delta\Big\}.
\end{align*}
Since $T\mapsto \P(\sum M^{D_n}(L,T)>2^dR)$ is continuous we have
\begin{equation}\label{eq:defT}
\P\Big(\sum M^{(D_n,S)}(L_{k},T_k)\leq 2^dR\Big)=2\delta\quad\text{ for all }k.
\end{equation}
Note that our space-time box has $2^d$ orthants in the top and $d2^d$ orthants in the faces. We therefore apply Lemma \ref{lem:lemma3part2} with $K$ equal to $(1 +d)2^dR+1$ and the sequences $(L_k)_k$ and $(T_k)_k$ defined before. We find that there exists $k_0$ such that for all $k\geq k_0$ we have
\[\P\Big(\sum N^{D_n}(L_k,T_k)+\sum M^{D_n}(L_k,T_k)\leq (d+1)2^{d}R\Big) \leq \frac 32 \P\big(Z^{D_n}\text{ dies out}\big)\leq \frac {3}2\delta^2.\]
We set $L\coloneqq L_{k_0}$ and $T\coloneqq T_{k_0}$. Then we have
\begin{align*}
\frac 32\delta^2 &\geq \P\Big(\sum N^{D_n}(L,T)+\sum M^{D_n}(L,T)\leq (d+1)2^{d}R\Big)\\
&\geq \P\Big(\sum N^{(D_n,S)}(L,T) \leq d2^dR\Big)\P\Big(\sum M^{(D_n, S)}(L,T)\leq 2^dR\Big)-\frac {\delta^2}2.
\end{align*}
For the second inequality we have used \eqref{eq:tasukatta3} and the definition of $S$. Together with \eqref{eq:defT} we get
\begin{equation}\label{eq:forlater}
\P\Big(\sum N^{(D_n,S)}(L,T)\leq  d2^dR\Big)\leq \delta.
\end{equation}
Applying \eqref{eq:tasukatta2} together with the definition of $S$, and using the fact that by symmetry, the value of $\P\big(N^{(D_n,S^2)}(L,T,u,\theta)\leq R\big)$ does not depend on $\theta$ and $u$, we obtain
\begin{align}\label{eq:propNn}
\P\big(N^{(D_n,S^2)}(L,T,u,\theta)\leq R\big)^{d2^d}\leq \delta+\frac{\delta^2}2\leq 2\delta
\end{align}
On the other hand \eqref{eq:tasukatta1} together with \eqref{eq:defT} and the definition of $S$ shows 
\begin{align}\label{eq:propMn}
\P\big(M^{(D_n,S^2)}(L,T,u,\theta)\leq R\big)^{2^d}\leq 3\delta.
\end{align}

\begin{remark}
Clearly the probabilities in (\ref{eq:propNn}) and (\ref{eq:propMn}) do not depend on the choice of $\theta$ and $u$, a fact that we will use in the proof of Proposition \ref{prop:prop2}.
\end{remark}

Now we have to verify that the claim of proposition \ref{prop:prop1} is indeed satisfied with this choice of $L$ and $T$. That is, we need to bound the probability that we find a copy of $D_n$ shifted to the correct space-time location, and such that every site is occupied by at least $S^2$ particles of the truncated tree. We show that each of the following steps occurs with high probability, independent of the choice of $\theta\in\Theta$:
\begin{enumerate}
\item[1.]The tree $Z^{(D_n,S^2)}$ has many particles leaving through $\O(L,T,e_1,\theta)$.
\item[2.]There exist $(t,x)\in \O(L,T,e_1,\theta)$ such that the particles occupying $x$ at time $t$ grow into a fully occupied copy $\{t+1\}\times(x+ne_1+D_n,S^2)$ of $(D_n,S^2)$.
\item[3.]Consider now the box 
\begin{align*}
\goodoverline{\B}:=\left( [0,T]\times\{-L,...,L\}^d\right) +(t+1,x+ne_1)
\end{align*}
The tree growing from $\{t+1\}\times(x+ne_1+D_n,S^2)$ will have many descendants that leave through the top $\goodoverline{\T}(1,-\theta)$ of $\goodoverline\B$.
\item[4.]There is one particle at $(\goodoverline t,\goodoverline x)\in \goodoverline\T(1, -\theta)$ that grows into a new copy of the box $\{\goodoverline t+1\}\times(\goodoverline x+D_n,S^2)$, which now satisfies the necessary conditions.
\end{enumerate}
%
%
\textbf{First step}: We have shown this in (\ref{eq:propNn}).
\\\textbf{Second step}: This will follow from our choice of $R$. We need to consider the geometry of the set
\begin{align*}
\mathcal R\coloneqq \Big\{(t,x)\in \F(L,T,e_1,\theta)\colon \exists v\in Z^{(D_n,S^2)}_{L}(t)\text{ s.t. }x=X(t,v),X(s,v)\notin \partial \B\;\forall s<t\Big\}
\end{align*}
 of space-time-points where a particle leaves $[0,T]\times \{-L,...,L\}^d$ through the orthant $\O_+(L,T,\theta)$ for the first time. Observe that 
\begin{align*}
N^{(D_n,S^2)}(L,T,e_1,\theta)=|\mathcal R|
\end{align*}
Let $I$ be the (finite) index set
\[I\coloneqq \Big((\N)\times\{L\}\times (n\Z^{d-1})\Big)\cap \O(L,T,e_1,\theta).\]
Set $H\coloneqq [0,1]\times\{0\}\times\{0,...,n-1\}^{d-1}$, so that we obtain a tiling with
\[\O(L,T,e_1,\theta)\subseteq \bigcup_{(t_i,x_i)\in I}\big((t_i,x_i)+H\big).\]
On $\{N^{(D_n,S^2)}(L,T,e_1,\theta)>R\}$ at least one of the following statements will be true:
\begin{itemize}
\item There exist at least $\sqrt R$ distinct indices $(t,x)\in I$ such that 
\begin{equation}\label{eq:caseA}\tag{case A}
\mathcal R\cap \big((t,x)+H\big)\neq \emptyset. 
\end{equation}
\item There exists $(t_{0},x_{0})\in I$ such that 
\begin{equation}\label{eq:caseB}\tag{case B}
\big|\mathcal R\cap \big((t_{0},x_{0})+H\big)\big| \geq \sqrt R.
\end{equation}
\end{itemize}
For both cases we let $E_{t,v}$ be the indicator function of the event that $(t,v)\in\mathcal R$ grows into a shifted copy of $D_n$:
\[E_{t,v}\coloneqq \1\big\{\big(X(t,v)+D_n+ne_1,S^2\big)\leq Z^{\{t\}\times\{X(t,v)\}}_{x+ne_1+D_n}(1)\big\}.\]
In \textbf{(\ref{eq:caseA})} note that $\sqrt R\geq (4n)^d R_1$, so we can find at least $R_1$ distinct indices $(t_1,x_1)$, $...,(t_{R_1},x_{R_1})\in I$ such that 
\[|t_i-t_j|\geq 2\quad\text{ and }\quad\|x_i-x_j\|_\infty\geq 4n\quad\text{ holds for all }i\neq j.\]
Now choose (deterministically) some $(s_i,v_i)\in \mathcal R$ with 
\begin{align*}
(s_i,X(s_i,v_i))\in (t_i,x_i)+H.
\end{align*}
Because of the truncation the events $\{E_{s_i,v_i}=1\}$ and $\{E_{s_j,v_j}=1\}$ are independent for $i\neq j$. Moreover the probability that $E_{s_i,v_i}=1$ is at least $\alpha$, defined in (\ref{eq:alpha}). By our choice of $R_1$ we have
\[\P(E_{(s,v)}=1\text{ for some }(s,v)\in\mathcal R)>1-\delta.\]
In \textbf{(\ref{eq:caseB})} we find $y\in x_{0}+\{L\}\times\{0,...,n-1\}$ such that at least $\frac{\sqrt R}{n}\geq R_4$ particles arrive at $[t_{0},t_{0}+1]\times \{y\}$. Let $G$ be the event that 
\begin{itemize}
 \item at least $R_3$ of those particles survive until time $t_{0}+1$
 \item while not leaving the set $\{y,y+e_1\}$, 
 \item and occupying $y$ at time $t_{0}+1$.
\end{itemize}
By our choice of $R_4$ and part (ii) of Lemma \ref{lem:lemma2} we obtain
\[\P(G)\geq \P\Big(\Big|Z^{(\{0\},R_4)}_{\{0,e_1\}}(1)\cap\{0\}\Big|\geq R_3\Big)\geq 1-\delta.\]
Let now $G'$ be the event that at time $t_{0}+2$ every site of $y+ne_1+D_n$ is occupied by at least $S^2$ descendants of the particles occupying $y$ at time $t_{0}+1$. By our choice of $R_3$ and part (iii) of Lemma \ref{lem:lemma2} we find that
\[\P(G'|G)\geq \P\big((ne_1+D_n,S^2)\leq Z_{ne_1+D_n}^{\{t_0+1\}\times(\{y\},R_3)}(t_0+2)\big)\geq 1-\delta\]
Now combining both cases and (\ref{eq:propNn}) yields
\begin{equation}\label{eq:second}
\P\left(\begin{matrix}\exists x\in \{L+n\}\times \{-L,...,L\}^{d-1}, t\in[0,T+1]\\
\text{s. th.}(x+D_n,S^2)\leq Z_{\{-L,...,L+2n\}\times\{-L,...,L\}^{d-1}}^{(D_n,S^2)}(t)\end{matrix}\right)\geq \Big(1-(2\delta)^{(d2^{d})^{-1}}\Big)(1-\delta)^2
\end{equation}
\textbf{Third step}: We now write $\goodoverline{\P}$ for $\P$ conditioned on the event in (\ref{eq:second}), and denote the first such pair by $(t,x)$. From now on we consider the process 
\[\big(\goodoverline Z_L(s)\big)_{s\geq t}:=\Big(Z_{x+\{-L,...,L\}^{d}}^{\{t\}\times(x+D_n,S^2)}(s)\Big)_{s\geq t}\]
started from $\{t\}\times(x+D_n,S^2)$. Observe that under $\goodoverline P$, the process $\goodoverline Z_L$ is independent of the process up to time $t$. We consider a shifted space-time box 
\[\goodoverline\B\coloneqq (t,x)+[0,T]\times\{-L,...,L\}^d.\]
and let $\goodoverline M(e_1,\theta)$ resp. $\goodoverline M(-e_1,\theta)$ count the number particles of $\goodoverline Z_L$ that leave $\goodoverline\B$ through $\goodoverline \T(e_1,\theta)$ resp. $\goodoverline \T(-e_1,\theta)$. By (\ref{eq:propMn}) we have 
\[\goodoverline{\P}(\goodoverline M(e_1,-\theta)\geq R)\geq 1-(3\delta)^{2^{-d}}.\]
\textbf{Fourth step}: On the event $\{\goodoverline M(e_1,-\theta)\geq R\}$ one of the following two cases will occur:
\begin{align}
\Big|\Big\{\goodoverline x\in\goodoverline\T(e_1,-\theta):\big|\{\goodoverline x\}\cap\goodoverline Z_L(T)\big|>0\Big\}\Big|&\geq\sqrt R\label{eq:newCaseA}\tag{\ref{eq:caseA}'}\\
\exists\goodoverline {x_0}\in\goodoverline\T(e_1,-\theta)\quad\text{ such that }\quad \big|\{\goodoverline x_0\}\cap\goodoverline Z_L(T)\big|&\geq \sqrt R.\label{eq:newCaseB}\tag{\ref{eq:caseB}'}
\end{align}
In \textbf{(\ref{eq:newCaseA})} we note that $\sqrt R\geq (4n)^dR_1$, and thus we find at least $R_1$ sites $\goodoverline x_1,...,\goodoverline x_{R_1}$ in $\goodoverline\T(e_1,-\theta)$, each occupied by at least one particle, with the property that 
\begin{align*}
\|\goodoverline x_i-\goodoverline x_j\|_\infty\geq 2n+1\quad\text{ for all }i\neq j.
\end{align*}
For $\goodoverline x\in\goodoverline\T(e_1,-\theta)$ we let $\goodoverline E_x$ be the indicator function of the event 
\[
\big\{(\goodoverline x+D_n,S^2)\leq Z_{\goodoverline x+D_n}^{\{t+T\}\times\{\goodoverline x\}}(t+T+1)\big\}.
\]
Because of the truncation the events $\{\goodoverline E_{\goodoverline x_i}=1\}$ and $\{\goodoverline E_{\goodoverline x_j}=1\}$ are independent under $\goodoverline{\P}$ for all $i\neq j$. Since  $\goodoverline{\P}(\goodoverline E_x=1)\geq \alpha$ the definition of $R_1$ implies
\[\goodoverline{\P}(\goodoverline E_{\goodoverline x_i}=1\text{ for some }i=1,...,R_1)\geq 1-\delta.\]
Finally, in \textbf{(\ref{eq:newCaseB})} our choice of $R_3$ implies that 
\begin{align*}
\P\Big((\goodoverline x+D_n,S^2)\leq Z^{\{t+T\}\times(\{\goodoverline x\},R_3)}_{\goodoverline x+D_n}(t+T+1)\Big)\geq 1-\delta.
\end{align*}
We have shown that
\begin{align*}\label{eq:fourthh}\goodoverline{\P}\left(\begin{matrix}\exists \goodoverline x\in\goodoverline \T(e_1,-\theta)\text{ s. th. }(\goodoverline x+D_n,S^2)\leq \goodoverline Z_L(t+T+1)\end{matrix}\right)\geq \Big(1-(3\delta)^{(d2^d)^{-1}}\Big)(1-\delta).
\end{align*}
Since $(t+T+1,\goodoverline x)\in [T,2T]\times\{L+n,...,2L+n\}\times\{-L,...,L\}^{d-1}$, the claim now follows from this together with (\ref{eq:second}) and our choice of $\delta$.
\end{proof}

\begin{proof}[Proof of Proposition \ref{prop:prop2} in \ref{eq:case1}]
Set $L'\coloneqq 2L+n$ and $T'\coloneqq 2T$. Recall that in the previous proof we chose $\theta\in\Theta$ and $u\in\{\pm e_1\}$, and then bounded the probability of the event that
\begin{itemize}
\item we find $R$ particles in the orthant $\O(u,\theta)$ in (\ref{eq:propMn}).
\item starting from those particles, we again find $R$ particles in the orthant $\goodoverline \T(u,-\theta)$ of the top of a shifted box in (\ref{eq:propNn}).
\end{itemize}
We now repeatedly apply this result, each time making a convenient choice for $\theta$ and $u$. We start with $\big(s^{(0)}, y^{(0)}\big)\coloneqq (s, y)$ from the statement of the proposition. Having constructed $\big(s^{(0)}, y^{(0)}\big),...,\big(s^{(k)}, y^{(k)}\big)$ we choose 
\[\theta_{k+1}\coloneqq -\big(\sign y^{(k)}_2,...,\sign y^{(k)}_d\big)\in\Theta\]
and $u_{k+1}$ equal to $e_1$ until the first $k$ with $y^{(k)}_1\geq L'+L$, after which we alternate by setting $u_{i+1}\coloneqq -u_i$. By Proposition \ref{prop:prop1} we know that with probability at least $(1-\eps)$ we find $\big(s^{(k+1)}, y^{(k+1)}\big)$ such that 
\[(y^{(k+1)}+D_n,S^2)\leq Z_{y^{(k)}+\{-L,...,3L\}\times\{-L,...,L\}^{d-1}}^{\{s^{(k)}\}\times(y^{(k)}+D_n,S^2)}(s^{(k+1)}+s^{(k)})\]
Note that by our choice for $\theta_k$ and $u_k$ we have
\begin{itemize}
\item $|y^{(k)}_i|\leq 2L\leq L'$ for all $k\geq 0$ and $i=2,...,d$. 
\item $y^{(k)}_1\in \{L',...,3L'\}$ eventually: We achieve $y^{(k)}_1\geq L'+L$ after at most $4$ applications, and by alternating the sign of $u_i$ for $i\geq k$ we ensure $L'\leq y^{(i)}_1\leq 3L'$ for all $i\geq k$.
\item $s^{(i)}\in[5T',...,6T']$ for some $i\geq k$: After $4$ applications we have $s^{(i)}\in [4T,...,8T]=[2T',...,4T']$. Since $y^{(i)}$ remains in the target area, we can repeat the procedure until $s^{(i)}\in[5T',...,6T']$.
\end{itemize}
Note that this requires between $4$ and $10$ applications of the proposition, so we have a success probability of at least $(1-\eps)^{10}\geq 1-\eps'$.
\end{proof} 

\subsubsection{Proof in \ref{eq:case2}}
\begin{proof}[Proof of Proposition \ref{prop:prop1} in \ref{eq:case2}:]
Take $L\in2\N$ large enough for (\ref{eq:case2}) to hold and fix some large $t\in\N$. We introduce the two sites 
\[z^1\coloneqq \Big(L+n,\frac L2,...,\frac L2\Big)\quad\text{ and }\quad z^2\coloneqq \Big(0,\frac L2,...,\frac L2\Big).\]
In the case $d=1$ we read this as $z^1=L+n$ and $z^2=0$. On the event 
\begin{align*}
\big\{Z^{(D_n,S^2)}_L\text{ survives}\big\}
\end{align*}
we consider a random sequence $(v_k)_{k\in\N}$ of particles by choosing $v_k$ from $Z^{(D_n,S^2)}_L(tk)$ in some deterministic way, say by choosing the minimal element in the lexicographical order. This sequence enables us to make infinitely many trials to find a fully occupied box at the required position: 

For every $k$, denote by $\big(Z^k(s))_{s\geq tk}$ the process obtained by taking $v_k$ as the new root and considering only its descendants. We define random variables
\begin{align*}A^i_k&\coloneqq \1\big\{(z^i+D_n,S^2)\leq Z^k(t(k+1))\big\}\quad\text{ for }k\in\N,i\in\{0,1\}.\end{align*}
We want to give a lower bound for the probability of $\{A^1_k=1\}$ and $\{A^2_k=1\}$, so consider 
\[M(z)\coloneqq \min_{x\in\{-L,...,L\}^d}\big\{P_\omega\big((z+D_n,S^2)\leq Z_{\{-L,...,3L\}\times\{-L,...,L\}^{d-1}}^{\{x\}}(t)\big)\big\}.\]
Setting now 
\begin{equation}\label{eq:nicealpha}\alpha\coloneqq \min\{\E[M(z^1)],\E[M(z^2)]\}>0\end{equation}
we can choose $k$ large enough for $(1-\alpha)^k\leq \delta$. Finally $T:=kt$ and 
\begin{align*}
A^1:= \1\big\{Z_L^{(D_n,S^2)}\text{ survives}, A^1_j=1\text{ for some }k\leq j\leq 2k\}.
\end{align*}
Observe that
\[\{A^1=1\}\subseteq \left\{\begin{matrix}\exists x\in\{L+n,...,2L+n\}\times\{-L,...,L\}^{d-1}, t\in[T, 2T] \\\text{s.t.  }(x+D_n,S^2)\leq Z^{(D_n,S^2)}_{\{-L,...,3L\}\times\{-L,...,L\}^{d-1}}(t)\end{matrix}\right\}\]
and
\[\P(A^1=1)
\geq (1-2\delta)\big(1-(1-\alpha)^k\big)\geq 1-3\delta\geq 1-\eps.\]
\end{proof}

\begin{proof}[Proof of Proposition \ref{prop:prop2} in \ref{eq:case2}:]
For this we choose the same values of $L$ and $T$, and observe that by symmetry the value of $\alpha$ does not change when we flip the sign of any coordinate in $z^1=(z^1_1,...,z^1_d)$ or $z^2=(z^2_1,...,z^2_d)$. So we choose them in such a way that 
\[\sign z^i_j=-\sign y_j\quad\text{ for all }j=2,...,d\text{ and }i=1,2,\]
where $y$ appeared in the statement of Proposition \ref{prop:prop2}. Now define $(z^{(i)})_{i\in\N}$ by $z^{(1)}:=y+z^1$ and 
\[z^{(i)}:=y+z^1+\sum_{j=2}^i (-1)^{j}z^2\quad \text{ for }i\geq 2.\]
Note that we have chosen the signs in such a way that for all $i$ we have
\begin{align*}
z^{(i)}\in\{L+n\}\times\{-L,...,L\}^{d-1}.
\end{align*}
Let $\widetilde A^1_i$ be the same indicator function as $A^1_i$ with $z^1$ replaced by $z^{(1)}$, and let $\widetilde A$ be defined as $A$ with $A^1_i$ replaced by $\widetilde A^1_i$. On $\{\widetilde A=1\}$ we find a minimal $K_1\in \{k,...,2k\}$ such that $\widetilde A^1_{K_1}=1$. That is
\begin{equation}\label{eq:temp}
(z^{(1)}+D_n,S^2) \leq Z^{\{s\}\times(y+D_n,S^2)}_{\{-L,...,3L\}\times\{-3L,...,3L\}^{d-1}}(tK_1).
\end{equation}
We now have to improve (\ref{eq:temp}) so that it holds for some time in $[5T,...,6T]$. For this we define indicator functions
\[\widetilde B^i\coloneqq \1\big\{\exists \;j\in \{k,...2k\}\colon (z^{(i+1)}+D_n,S^2) \leq Z^{\,\{t^{(i)}\}\times(z^{(i)}+D_n,S^2)}_{\{-L,...,3L\}\times\{-3L,...,3L\}^{d-1}}(t^{(i)}+jt)\big\}.\]
So $\{\widetilde B^i=1\}$ is (up to shifts) the same event as $\{A^1=1\}$ with $z^1$ replaced by $z^2$ and started from $(z^{(i)}+D_n,S^2)$ at some time $t^{(i)}$, which we did not specify yet. Note that from our choice of $\alpha$ in \eqref{eq:nicealpha}, the same argument as before yields 
\[\P(\widetilde B^i=1)\geq 1-3\delta\geq 1-\eps\quad\text{ for all }i.\]
We now recursively define $(t^{(i)})_{i\in\N}$. Start from $t^{(1)}:=K_1t$, and assume we have found $t^{(1)},...,t^{(i)}$. On $\{\widetilde B^i=1\}$ we find a minimal value $K_{i+1}$ such that $z^{(i+1)}+D_n$ is occupied by at least $S^2$ particles at time $t^{(i)}+tK_{i+1}$. Then we proceed by
\[t^{(i+1)}\coloneqq t^{(i)}+K_{i+1}k.\]
Since $t^{(i+1)}-t^{(i)}\in[T,...,2T]$ we have
\[\big\{\widetilde A=\widetilde B_2=...=\widetilde B_6=1\big\}\subseteq A^{s, y}(L,T,n,S).\]
So the claim follows from our choice of $\eps$ in (\ref{eq:defeps}) and because the event on the left hand side has probability at least $(1-\eps)^6$.
\end{proof}

%


\providecommand{\bysame}{\leavevmode\hbox to3em{\hrulefill}\thinspace}
\providecommand{\MR}{\relax\ifhmode\unskip\space\fi MR }
\providecommand{\MRhref}[2]{%
  \href{http://www.ams.org/mathscinet-getitem?mr=#1}{#2}
}
\providecommand{\href}[2]{#2}


\ACKNO{We thank Nobuo Yoshida for hosting SJ, for many inspiring and fruitful discussions and for helpful comments on an earlier version of this paper. SJ thanks the DAAD for financial support in the PROMOS scholarship. Further, we are grateful to an anonymous referee for pointing out a mistake, and for many comments and suggestions.}


\end{document}